\documentclass[twoside]{aiml18}

\usepackage{aiml18macro}

\usepackage{graphicx}
\usepackage{amsmath}
\usepackage{amssymb}
\usepackage{amsfonts,euscript,MnSymbol}
\usepackage{mathtools}
\usepackage{bussproofs}


\EnableBpAbbreviations

\newtheorem{propos}{Proposition}



\begin{document}

\begin{frontmatter}
  \title{Cut Elimination for Weak Modal Grzegorczyk Logic via Non-Well-Founded Proofs}
  \author{Yury Savateev}\footnote{This work is supported by the Russian Foundation for Basic Research, grant 15-01-09218a.}
  \address{National Research University Higher School of Economics}
  \author{Daniyar Shamkanov}
  \address{Steklov Mathematical Institute of the Russian Academy of Sciences \\ National Research University Higher School of Economics}

  \begin{abstract}
  We present a sequent calculus for the weak Grzegorczyk logic $\mathsf{Go}$ allowing non-well-founded proofs and obtain the cut-elimination theorem for it by constructing a continuous cut-elimination mapping acting on these proofs.  
  \end{abstract}

  \begin{keyword}
  non-well-founded proofs, weak Grzegorczyk logic, logic Go, cut-elimination, cyclic proofs.
  \end{keyword}
 \end{frontmatter}


\section{Introduction}
The logic $\mathsf{Go}$, also known as the weak Grzegorzyk logic, is the smallest normal modal logic containing the axiom $K$ and the axioms $\Box A\to \Box\Box A$ and $\Box(\Box(A\to \Box A)\to A)\to\Box A$. A survey of results on $\mathsf{Go}$ can be found in \cite{Litak}. The logic is sound and complete with respect to the class of transitive frames with no proper clusters and infinite ascending chains \cite{Gore}, and it is a proper sublogic of both G\"odel-L\"ob logic $\mathsf{GL}$ (also known as provability logic) and Grzegorzyk logic $\mathsf{Grz}$. 

Recently a new proof-theoretic presentation for the logic $\mathsf{GL}$ in the form of a sequent calculus allowing non-well-founded proofs was given in \cite{Sham,Iemhoff}. Later, the same ideas were applied to the modal Grzegorczyk logic $\mathsf{Grz}$ in \cite{SavSham,SavSham2}, where it allowed to prove several proof-theoretic properties of this logic syntactically. 

In this paper we use the same approach for the logic $\mathsf{Go}$. We consider a sequent calculus allowing non-well-founded proofs $\mathsf{Go_\infty}$ and present the cut-elimination theorem for it. We consider the set of non-well-founded proofs of $\mathsf{Go_\infty}$ and various sets of operations acting on theses proofs as ultrametric spaces and define our cut-elimination operator using the Prie\ss-Crampe fixed-point theorem (see \cite{PrCr}), which is a strengthening of the Banach's theorem. 

In \cite{Ramanayake} Gor\'e and Ramanayake remark that their method for cut elimination for the logic $\mathsf{Go}$ is more complex than the similar methods for the logics $\mathsf{GL}$ and $\mathsf{Grz}$. This difference in complexity seems to be present in our approach as well. The proofs of cut-elimination for $\mathsf{Go_\infty}$ and $\mathsf{Grz_\infty}$ turn out to be almost the same, but the system $\mathsf{Go_\infty}$ itself seems to be more complex (it includes rules of arbitrary arity, where $\mathsf{Grz_\infty}$ has at most binary) and the translation from $\mathsf{Go_\infty}$ to the standart system seems to require bigger induction measure. 

\section{Preliminaries}
\label{s2}
In this section we recall the weak Grzegorczyk logic $\mathsf{Go}$ and define an ordinary sequent calculus for it.
  
\textit{Formulas} of $\mathsf{Go}$, denoted by $A$, $B$, $C$, are built up as follows:
$$ A ::= \bot \,\,|\,\, p \,\,|\,\, (A \to A) \,\,|\,\, \Box A \;, $$
where $p$ stands for atomic propositions. 

The Hilbert-style axiomatization of $\mathsf{Go}$ is given by the following axioms and inference rules:

\textit{Axioms:}
\begin{itemize}
\item[(i)] Boolean tautologies;
\item[(ii)] $\Box (A \rightarrow B) \rightarrow (\Box A \rightarrow \Box B)$;
\item[(iii)] $\Box A \rightarrow \Box \Box A$;
\item[(iv)] $\Box(\Box(A \rightarrow \Box A) \rightarrow A) \rightarrow \Box A$.
\end{itemize}

\textit{Rules:} modus ponens, $A / \Box A$. 

Now we define an ordinary sequent calculus for $\mathsf{Go}$. A \textit{sequent} is an expression of the form $\Gamma \Rightarrow \Delta$, where $\Gamma$ and~$\Delta$ are finite multisets of formulas. For a multiset of formulas $\Gamma = A_1,\dotsc, A_n$, we set $\Box \Gamma := \Box A_1,\dotsc, \Box A_n$.

The system $\mathsf{Grz_{Seq}}$, is defined by the following initial sequents and inference rules: 

\begin{gather*}
\AXC{ $\Gamma, A \Rightarrow A, \Delta $ ,}
\DisplayProof \qquad
\AXC{ $\Gamma , \bot \Rightarrow \Delta $ ,}
\DisplayProof
\end{gather*}
\begin{align*}
&
\AXC{$\Gamma , B \Rightarrow \Delta $}
\AXC{$\Gamma \Rightarrow A,\Delta $}
\LeftLabel{$\mathsf{\to_L}$}
\BIC{$\Gamma , A \to B \Rightarrow \Delta$}
\DisplayProof\;,& &
\AXC{$\Gamma , A \Rightarrow B, \Delta $}
\LeftLabel{$\mathsf{\to_R}$}
\UIC{$\Gamma \Rightarrow A \to B, \Delta$}
\DisplayProof\;,\\\\
&
\AXC{$ \Box\Pi, \Pi,\Box(A\to\Box A) \Rightarrow A$}
\LeftLabel{$\mathsf{\Box_{Go}}$}
\UIC{$\Gamma, \Box \Pi \Rightarrow \Box A ,\Delta $}
\DisplayProof \;.
\end{align*}
\begin{center}
\textbf{Fig. 1.} The system $\mathsf{Go_{Seq}}$
\end{center}
The cut rule has the form
\begin{gather*}
\AXC{$\Gamma\Rightarrow A,\Delta$}
\AXC{$\Gamma,A\Rightarrow\Delta$}
\LeftLabel{$\mathsf{cut}$}
\RightLabel{ ,}
\BIC{$\Gamma\Rightarrow\Delta$}
\DisplayProof
\end{gather*}
where $A$ is called the \emph{cut formula} of the given inference.

\begin{lemma} \label{prop}
$\mathsf{Go_{Seq}} + \mathsf{cut}\vdash \Gamma\Rightarrow\Delta$ if and only if $\mathsf{Go} \vdash \bigwedge\Gamma\to\bigvee\Delta $. 
\end{lemma}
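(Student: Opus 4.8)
The plan is to prove the two implications separately, in both cases by translating derivations. For the ``only if'' direction I would argue by induction on the height of a given derivation of $\Gamma\Rightarrow\Delta$ in $\mathsf{Go_{Seq}}+\mathsf{cut}$, maintaining the invariant $\mathsf{Go}\vdash\bigwedge\Gamma\to\bigvee\Delta$. The initial sequents and the rules $\mathsf{\to_L}$, $\mathsf{\to_R}$, $\mathsf{cut}$ translate into Boolean tautologies or a single application of modus ponens, so they are immediate from axioms~(i) and the rules of $\mathsf{Go}$; the only real work is the rule $\mathsf{\Box_{Go}}$.

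For that case, write $C:=\bigwedge\Pi$ and use the $\mathsf{Go}$-provable equivalence $\bigwedge\Box\Pi\leftrightarrow\Box C$ (a consequence of necessitation together with axiom~(ii)). The induction hypothesis then reads, after classical rearrangement, $\mathsf{Go}\vdash C\to(\Box C\to(\Box(A\to\Box A)\to A))$. Applying necessitation and axiom~(ii) twice yields $\mathsf{Go}\vdash\Box C\to(\Box\Box C\to\Box(\Box(A\to\Box A)\to A))$, and axiom~(iii) collapses the iterated box on the left, giving $\mathsf{Go}\vdash\Box C\to\Box(\Box(A\to\Box A)\to A)$. Now the instance of axiom~(iv) for the formula $A$ gives $\mathsf{Go}\vdash\Box(\Box(A\to\Box A)\to A)\to\Box A$, hence $\mathsf{Go}\vdash\Box C\to\Box A$, and weakening the context back in (classically) produces the desired $\mathsf{Go}\vdash\bigwedge(\Gamma,\Box\Pi)\to\bigvee(\Box A,\Delta)$.

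For the ``if'' direction I would first note that $\mathsf{Go_{Seq}}$ admits weakening, and then embed the Hilbert calculus. Each axiom of $\mathsf{Go}$ admits a derivation of $\Rightarrow A$: Boolean tautologies are derived in the propositional fragment (with $\mathsf{cut}$ available), while axioms~(ii), (iii), (iv) are obtained by removing the leading implications with $\mathsf{\to_R}$ and then applying $\mathsf{\Box_{Go}}$ with the appropriate $\Pi$, leaving a premise that is either an initial sequent or a short propositional derivation followed by weakening. Modus ponens is simulated by $\mathsf{cut}$ against the derivable sequent $A,\,A\to B\Rightarrow B$, and the necessitation rule $A/\Box A$ is simulated by one application of $\mathsf{\Box_{Go}}$ with $\Pi$ empty, whose premise $\Box(A\to\Box A)\Rightarrow A$ follows from $\Rightarrow A$ by weakening. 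Hence $\mathsf{Go}\vdash B$ implies $\mathsf{Go_{Seq}}+\mathsf{cut}\vdash\,\Rightarrow B$.

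Finally, to pass from $\mathsf{Go}\vdash\bigwedge\Gamma\to\bigvee\Delta$, i.e.\ $\mathsf{Go_{Seq}}+\mathsf{cut}\vdash\,\Rightarrow\bigwedge\Gamma\to\bigvee\Delta$, back to $\Gamma\Rightarrow\Delta$, I would use the routine cut-free derivations of $\Gamma\Rightarrow\bigwedge\Gamma$ and $\bigvee\Delta\Rightarrow\Delta$ obtained from $\mathsf{\to_L}$, $\mathsf{\to_R}$ and the $\bot$-rule by unfolding the definitions of $\land$, $\lor$, $\neg$, and then chain them: an application of $\mathsf{\to_L}$ gives $\Gamma,\bigwedge\Gamma\to\bigvee\Delta\Rightarrow\bigvee\Delta$; cutting against $\Rightarrow\bigwedge\Gamma\to\bigvee\Delta$ gives $\Gamma\Rightarrow\bigvee\Delta$; cutting against $\bigvee\Delta\Rightarrow\Delta$ gives $\Gamma\Rightarrow\Delta$. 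I expect the $\mathsf{\Box_{Go}}$ case of the forward direction to be the main obstacle, as it is precisely where the non-Boolean axioms of $\mathsf{Go}$—the weak Grzegorczyk axiom~(iv) applied after the ``box-collapse'' provided by axiom~(iii)—must be combined; everything else is bookkeeping.
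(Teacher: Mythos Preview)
Your proposal is correct and is precisely the kind of ``standard transformation of proofs'' that the paper invokes without spelling out; the paper's own proof is a single sentence to that effect, and you have simply supplied the details, including the one nontrivial step (the $\mathsf{\Box_{Go}}$ case, handled via axioms (iii) and (iv)) in the expected way.
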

\begin{proof}
Standard transformations of proofs.
\end{proof}

\begin{theorem}\label{cutelimgrz}
If $\mathsf{Go_{Seq}} + \mathsf{cut}\vdash \Gamma\Rightarrow\Delta$, then $\mathsf{Go_{Seq}} \vdash \Gamma\Rightarrow\Delta$. 
\end{theorem}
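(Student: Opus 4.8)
The plan is to prove cut admissibility by detouring through a sequent calculus $\mathsf{Go_\infty}$ that is cut-free but permits non-well-founded derivation trees, the infinite branches being constrained by a global fairness condition (intuitively, every infinite branch must pass through infinitely many applications of the modal rule). One obtains $\mathsf{Go_\infty}$ from $\mathsf{Go_{Seq}}$ by replacing $\mathsf{\Box_{Go}}$ with a ``local'' modal rule that no longer has to carry the weak Grzegorczyk reflection principle inside a single inference, this being delegated to the fairness condition; in the case of $\mathsf{Go}$ this rule turns out to have unbounded arity. The easy direction is recorded first: a finite $\mathsf{Go_{Seq}} + \mathsf{cut}$ proof is in particular a non-well-founded one, and $\mathsf{\Box_{Go}}$ is simulated in $\mathsf{Go_\infty}$ by a (possibly infinite) derivation built from the local modal rule, so $\mathsf{Go_{Seq}} + \mathsf{cut}\vdash\Gamma\Rightarrow\Delta$ implies $\mathsf{Go_\infty}+\mathsf{cut}\vdash\Gamma\Rightarrow\Delta$.

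Next I would establish cut elimination inside $\mathsf{Go_\infty}$ by a metric fixed-point argument, following the corresponding theorem for $\mathsf{Grz_\infty}$ essentially verbatim. View the set of $\mathsf{Go_\infty}$-proofs of a fixed sequent as an ultrametric space, with $d(\pi,\sigma)=2^{-n}$ when $\pi$ and $\sigma$ first disagree at depth $n$ and $d(\pi,\sigma)=0$ when $\pi=\sigma$; this space is spherically complete. Define an operator $\Phi$ that performs one layer of Gentzen-style cut reduction — permuting the lowermost cuts upward past the rules directly above them, resolving a cut against an initial sequent, and so on — while leaving everything strictly below the current cut frontier untouched. Then $\Phi$ is contracting, as $\Phi(\pi)$ agrees with the intended cut-free output strictly deeper than $\pi$ does, so by the Prie\ss-Crampe fixed-point theorem it has a fixed point, which is cut-free; since $\Phi$ is continuous and preserves the end-sequent, the induced map $\mathrm{CE}$ sends every $\mathsf{Go_\infty}+\mathsf{cut}$-proof to a cut-free $\mathsf{Go_\infty}$-proof of the same sequent. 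The only delicate point here is to check that a single reduction step, and hence the limit, preserves the global fairness condition; the unbounded arity of the modal rule merely complicates the bookkeeping.

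It remains to translate a cut-free $\mathsf{Go_\infty}$-proof into a finite $\mathsf{Go_{Seq}}$-proof. Admissibility of weakening and contraction together with the subformula property (extended so as to close the relevant formula set under the side formulas $\Box(A\to\Box A)$ that the modal rule introduces) show that only finitely many sequents can occur in a given cut-free proof, so by a K\"onig/pigeonhole argument the proof folds into a finite cyclic proof in which every loop crosses a modal inference. Such a cyclic proof is then unravelled into an ordinary $\mathsf{Go_{Seq}}$-derivation by induction on its loop structure, using that $\mathsf{\Box_{Go}}$ internalizes precisely the reflection step needed to discharge a loop. Composing the three steps yields $\mathsf{Go_{Seq}}+\mathsf{cut}\vdash\Gamma\Rightarrow\Delta \;\Rightarrow\; \mathsf{Go_\infty}+\mathsf{cut}\vdash\Gamma\Rightarrow\Delta \;\Rightarrow\; \mathsf{Go_\infty}\vdash\Gamma\Rightarrow\Delta \;\Rightarrow\; \mathsf{Go_{Seq}}\vdash\Gamma\Rightarrow\Delta$, which is the theorem.

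I expect this last translation to be the main obstacle, rather than the cut-elimination step, which largely copies the $\mathsf{Grz_\infty}$ argument. In the $\mathsf{GL}$ and $\mathsf{Grz}$ settings the modal rule is at most binary and collapsing cyclic proofs is comparatively direct; for $\mathsf{Go}$ the modal rule of $\mathsf{Go_\infty}$ has unbounded arity, so a loop may re-enter through any of an unbounded number of premises, and the induction measure driving the collapse has to track the arities and the nesting of all modal inferences occurring along loops simultaneously. Producing a genuinely well-founded such measure is where the real difficulty lies.
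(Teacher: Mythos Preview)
Your three-step outline --- embed into $\mathsf{Go_\infty}+\mathsf{cut}$, eliminate cuts there via a Prie\ss-Crampe fixed point, translate cut-free $\mathsf{Go_\infty}$ back to $\mathsf{Go_{Seq}}$ --- is exactly the paper's strategy, and your diagnosis that the third step is the real obstacle matches the paper's own remarks.

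Two points of divergence are worth noting. First, your sketch of the fixed-point argument is inaccurate in ways that would matter if carried out: the paper does not iterate a reduction operator on proofs (a fixed point of such a map would be a single proof, not a cut-elimination procedure) but works on the function space of root-preserving non-expansive maps $\mathcal{P}\to\mathcal{P}$, equipped with a refined ultrametric $l_1$ that tracks both $n$-fragments (branches cut at the $n$th \emph{right} $\Box$-premise, not raw tree depth) and local heights; the cut-elimination mapping $\mathsf{ce}$ is the fixed point of a contractive operator on this function space, and the construction is layered through formula-indexed cut-removers $\mathsf{re}_A$ built by induction on $A$, with a separate fixed-point argument for the $\Box B$ case. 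Since you explicitly defer to the $\mathsf{Grz_\infty}$ proof this is ultimately cosmetic, but your summary as written would not go through.

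Second, for the translation back you take a genuinely different route: fold the $\infty$-proof into a cyclic proof via K\"onig/pigeonhole, then unravel loops using $\mathsf{\Box_{Go}}$. The paper instead gives a direct translation by a quadruple induction --- three outer parameters count the subformulas of the end-sequent not yet absorbed into auxiliary sets $\Lambda_1$, $\Lambda_2$, $\Omega$, with an innermost induction on local height --- accumulating side formulas $\Box(A\to\Box A)$ in the antecedent and discharging them by a single application of $\mathsf{\Box_{Go}}$ in the appropriate subcase. This sidesteps cyclic proofs entirely but at the price of the heavier measure; your approach would make finiteness transparent but, as you correctly anticipate, would have to control the unbounded-arity loop structure explicitly.
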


A syntactic cut-elimination for $\mathsf{Go}$ was obtained by R. Gor\'e and R. Ramanayake in \cite{Ramanayake}. 
In this paper, we will give another proof of this cut-elimination theorem. 

\section{Non-well-founded proofs}
\label{s3}
Now we define a sequent calculus for $\mathsf{Go}$ allowing non-well-founded proofs.  

Inference rules and initial sequents of the sequent calculus $\mathsf{Go_\infty}$ have the following form:
\[
\AXC{ $\Gamma, p \Rightarrow p, \Delta $ ,}
\DisplayProof\qquad
\AXC{ $\Gamma , \bot \Rightarrow  \Delta$ ,}
\DisplayProof 
\]
\[
\AXC{$\Gamma , B \Rightarrow \Delta $}
\AXC{$\Gamma \Rightarrow A,\Delta $}
\LeftLabel{$\mathsf{\to_L}$}
\BIC{$\Gamma , A \to B \Rightarrow \Delta$}
\DisplayProof\;,\qquad
\AXC{$\Gamma , A \Rightarrow B, \Delta $}
\LeftLabel{$\mathsf{\to_R}$}
\UIC{$\Gamma \Rightarrow A \to B, \Delta$}
\DisplayProof\;,
\]
\[
\AXC{$\Box \Pi,\Pi \Rightarrow A_1,\ldots,A_n,\Box A_1,\ldots\Box A_n$}
\AXC{$\Box \Pi, \Pi\Rightarrow A_1$}
\AXC{$\ldots$}
\AXC{$\Box \Pi, \Pi\Rightarrow A_n$}
\QuaternaryInfC{$\Gamma, \Box \Pi\Rightarrow \Box A_1,\ldots,\Box A_n, \Delta$}
\DisplayProof \;
\]
\begin{center}
\textbf{Fig. 2.} The system $\mathsf{Go}_\infty$
\end{center}

The system $\mathsf{Go}_{\infty}+\mathsf{cut}$ is defined by adding the rule ($\mathsf{cut}$) to the system $\mathsf{Go_\infty}$.

The will refer to all but the leftmost premises of the rule ($\Box$) as "right".

An \emph{$\infty$--proof} in $\mathsf{Go}_\infty$ ($\mathsf{Go}_{\infty}+\mathsf{cut}$) is a (possibly infinite) tree whose nodes are marked by
sequents and whose leaves are marked by initial sequents and that is constructed according to the rules of the sequent calculus. In addition, every infinite branch in an $\infty$--proof must pass through a right premise of the rule ($\Box$) infinitely many times. A sequent $\Gamma \Rightarrow \Delta$ is \emph{provable} in $\mathsf{Go}_\infty$ ($\mathsf{Go}_{\infty}+\mathsf{cut}$) if there is an $\infty$--proof in $\mathsf{Go}_\infty$ ($\mathsf{Go}_{\infty}+\mathsf{cut}$) with the root marked by $\Gamma \Rightarrow \Delta$.


For a multiset of formulas $\Gamma = A_1,\dotsc, A_n$, we set $$\boxtimes \Gamma := A_1,\ldots,A_n,\Box A_1,\dotsc, \Box A_n.$$

Then the rule ($\Box$) can be written as
\begin{gather*}
\AXC{$\boxtimes \Pi \Rightarrow \boxtimes(A_1,\ldots,A_n)$}
\AXC{$\boxtimes \Pi\Rightarrow A_1$}
\AXC{$\ldots$}
\AXC{$\boxtimes \Pi\Rightarrow A_n$}
\QuaternaryInfC{$\Gamma, \Box \Pi\Rightarrow \Box A_1,\ldots,\Box A_n, \Delta$}
\DisplayProof \;.
\end{gather*}

Let us construct an $\infty$--proof of the sequent $\Box(\Box(p \rightarrow \Box p) \rightarrow p) \Rightarrow \Box p$.

Let $F=\Box(p \rightarrow \Box p) \rightarrow p $ and let $\psi$ be the following proof:
\begin{gather*}
\AXC{$\mathsf{Ax}$}
\noLine
\UIC{$\boxtimes F,p\Rightarrow p,\Box p,\Box p,\Box(p\to\Box p)$}
\LeftLabel{$\mathsf{\to_R}$}
\UIC{$\boxtimes F\Rightarrow \boxtimes(p,p\to\Box p)$}
\DisplayProof\:.
\end{gather*}

Let $\phi$ be the following proof part:

\begin{gather*}
\AXC{$\mathsf{Ax}$}
\noLine
\UIC{$\Box F,p\Rightarrow p,\Box p$}
\AXC{$\psi$}
\AXC{$(1)$}
\noLine
\UIC{$F,p,\Box F\Rightarrow \Box p$}
\LeftLabel{$\mathsf{\to_R}$}
\UIC{$\boxtimes F\Rightarrow p\to\Box p$}
\AXC{$(2)$}
\noLine
\UIC{$\boxtimes F\Rightarrow p$}
\LeftLabel{$\Box$}
\TIC{$\Box F\Rightarrow\Box(p\to\Box p),\Box p,p$}
\LeftLabel{$\mathsf{\to_L}$}
\BIC{$\Box F,\Box(p \rightarrow \Box p) \rightarrow p\Rightarrow p,\Box p$}
\DisplayProof
\end{gather*}

Let $\xi$ be the following proof part:
\[
\AXC{$\psi$}
\AXC{$(1)$}
\noLine
\UIC{$F,p,\Box F\Rightarrow \Box p$}
\LeftLabel{$\mathsf{\to_R}$}
\UIC{$\boxtimes F\Rightarrow p\to\Box p$}
\AXC{$(2)$}
\noLine
\UIC{$\boxtimes F\Rightarrow p$}
\LeftLabel{$\Box$}
\TIC{$p,\boxtimes F\Rightarrow \Box p,\Box(p \rightarrow \Box p)$}
\LeftLabel{$\mathsf{\to_R}$}
\UIC{$\boxtimes F \Rightarrow \boxtimes(p \rightarrow \Box p)$}
\DisplayProof
\]
Let $\theta$ be the following proof part:
\begin{gather*}
\AXC{$\mathsf{Ax}$}
\noLine
\UIC{$\Box F,p\Rightarrow p$}
\AXC{$(1)$}
\noLine
\UIC{$\boxtimes F \Rightarrow \boxtimes(p \rightarrow \Box p)$}
\AXC{$(2)$}
\noLine
\UIC{$F,p,\Box F\Rightarrow \Box p$}
\LeftLabel{$\mathsf{\to_R}$}
\UIC{$\boxtimes F\Rightarrow p\to\Box p$}
\LeftLabel{$\Box$}
\BIC{$\Box F \Rightarrow \Box(p \rightarrow \Box p),p$}
\LeftLabel{$\mathsf{\to_L}$}
\BIC{$\Box F, \Box(p \rightarrow \Box p) \rightarrow p \Rightarrow p$}
\DisplayProof
\end{gather*}

An $\infty$--proof of the sequent $\Box(\Box(p \rightarrow \Box p) \rightarrow p) \Rightarrow \Box p$ can be constructed as follows: 
\begin{gather*}
\AXC{$\vdots$}
\noLine
\UIC{$\phi$}
\AXC{$\vdots$}
\noLine
\UIC{$\theta$}
\LeftLabel{$\Box$}
\BIC{$F,p,\Box F\Rightarrow \Box p$}
\AXC{$\vdots$}
\noLine
\UIC{$\theta$}
\BIC{$\phi$}
\AXC{$\vdots$}
\noLine
\UIC{$\phi$}
\AXC{$\vdots$}
\noLine
\UIC{$\theta$}
\LeftLabel{$\Box$}
\BIC{$F,p,\Box F\Rightarrow \Box p$}
\AXC{$\vdots$}
\noLine
\UIC{$\theta$}
\BIC{$\xi$}
\AXC{$\vdots$}
\noLine
\UIC{$\phi$}
\AXC{$\vdots$}
\noLine
\UIC{$\theta$}
\LeftLabel{$\Box$}
\BIC{$F,p,\Box F\Rightarrow \Box p$}
\BIC{$\theta$}
\LeftLabel{$\Box$}
\RightLabel{.}
\BIC{$\Box(\Box(p \rightarrow \Box p) \rightarrow p) \Rightarrow \Box p$}
\DisplayProof\;.
\end{gather*}

The \emph{$n$-fragment} of an $\infty$--proof is a finite tree obtained from the $\infty$--proof by cutting every branch at the $n$th from the root right premise of a $\Box$-rule. The $1$-fragment of an $\infty$--proof is also called its \emph{main fragment}. The \emph{local height $\lvert \pi \rvert$ of an $\infty$--proof $\pi$} is the length of the longest branch in its main fragment. An $\infty$--proof only consisting of an initial sequent has height 0.

The local height of the $\infty$--proof constructed for the sequent $\Box(\Box(p \rightarrow \Box p) \rightarrow p) \Rightarrow \Box p$ equals to 4 and its main fragment has the form
\begin{gather*}
\AXC{$\Box F,p\Rightarrow p,\Box p$}
\AXC{$\Box F,p\Rightarrow p,\Box p,\Box p,\Box(p\to\Box p)$}
\LeftLabel{$\mathsf{\to_R}$}
\UIC{$\Box F\Rightarrow p,p\to\Box p,\Box p,\Box(p\to\Box p)$}
\AXC{}
\LeftLabel{$\Box$}
\BIC{$\Box F\Rightarrow\Box(p\to\Box p),\Box p,p$}
\LeftLabel{$\mathsf{\to_L}$}
\BIC{$\Box F,\Box(p \rightarrow \Box p) \rightarrow p\Rightarrow p,\Box p$}
\AXC{}
\LeftLabel{$\Box$}
\RightLabel{.}
\BIC{$\Box(\Box(p \rightarrow \Box p) \rightarrow p) \Rightarrow \Box p$}
\DisplayProof
\end{gather*}

We denote the set of all $\infty$-proofs in the system $\mathsf{Go}_{\infty} +\mathsf{cut} $ by $\mathcal P$. 

For $\pi, \tau\in\mathcal P$, we write $\pi \sim_n \tau$ if $n$-fragments of these $\infty$-proofs coincide. For any $\pi, \tau\in\mathcal P$, we also set $\pi \sim_0 \tau$.

For every $\infty$-proof $\pi\in\mathcal P$ one of the following holds: 
\begin{enumerate}
\item $\pi$ consists of a single initial sequent.
\item $\pi$ has the form 
\[
\AXC{$\pi_0$}
\noLine
\UIC{$\Gamma , A \Rightarrow B, \Delta $}
\LeftLabel{$\mathsf{\to_R}$}
\UIC{$\Gamma \Rightarrow A \to B, \Delta$}
\DisplayProof
\]
with $\lvert \pi_0 \rvert<\lvert \pi \rvert$. We denote this by $\pi=\mathsf{\to_{R(A\to B)}}(\pi_0)$.
\item $\pi$ has the form 
\[
\AXC{$\pi_0$}
\noLine
\UIC{$\Gamma , B \Rightarrow \Delta $}
\AXC{$\pi_1$}
\noLine
\UIC{$\Gamma \Rightarrow A,\Delta $}
\LeftLabel{$\mathsf{\to_L}$}
\BIC{$\Gamma , A \to B \Rightarrow \Delta$}
\DisplayProof
\]
with $\lvert \pi_0 \rvert,\lvert \pi_1 \rvert<\lvert \pi \rvert$. We denote this by $\pi=\mathsf{\to_{L(A\to B)}}(\pi_0,\pi_1)$.
\item $\pi$ has the form 
\[
\AXC{$\Gamma\Rightarrow A,\Delta$}
\AXC{$\Gamma,A\Rightarrow\Delta$}
\LeftLabel{$\mathsf{cut}$}
\RightLabel{ ,}
\BIC{$\Gamma\Rightarrow\Delta$}
\DisplayProof
\]
with $\lvert \pi_0 \rvert,\lvert \pi_1 \rvert<\lvert \pi \rvert$. We denote this by $\pi=\mathsf{cut_A}(\pi_0,\pi_1)$.
\item $\pi$ has the form 
\[
\AXC{$\boxtimes \Pi \Rightarrow \boxtimes(A_1,\ldots,A_n)$}
\AXC{$\boxtimes \Pi\Rightarrow A_1$}
\AXC{$\ldots$}
\AXC{$\boxtimes \Pi\Rightarrow A_n$}
\QuaternaryInfC{$\Gamma, \Box \Pi\Rightarrow \Box A_1,\ldots,\Box A_n, \Delta$}
\DisplayProof \;.
\]
with $\lvert \pi_0 \rvert<\lvert \pi \rvert$. We denote this by $\pi=\mathsf{\Box_{\Gamma;\Delta}}(\pi_0,\pi_1,\ldots,\pi_n)$.
\end{enumerate}

Now we define two translations that connect ordinary and non-well-founded sequent calculi for the logic $\mathsf{Go}$. 

\begin{lem}\label{AtoA}
We have  $\mathsf{Go}_{\infty} \vdash \Gamma,A\Rightarrow A,\Delta$ for any sequent $\Gamma \Rightarrow \Delta$ and any formula $A$.
\end{lem}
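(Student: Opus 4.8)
The plan is to argue by induction on the structure of the formula $A$, proving the statement for all multisets $\Gamma$ and $\Delta$ simultaneously. A pleasant feature is that the $\infty$-proof produced in each case will in fact be finite, so that the side condition on infinite branches holds vacuously and the non-well-founded machinery is not needed here. In the base cases $A$ is either an atomic proposition $p$ — in which case $\Gamma, p \Rightarrow p, \Delta$ is literally an initial sequent — or $A = \bot$, in which case $\Gamma, \bot \Rightarrow \bot, \Delta$ is an instance of the initial sequent $\Gamma', \bot \Rightarrow \Delta'$ with $\Gamma' := \Gamma$ and $\Delta' := \bot, \Delta$.

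For the implication case $A = B \to C$, first apply $\mathsf{\to_R}$ to reduce the goal $\Gamma, B \to C \Rightarrow B \to C, \Delta$ to $\Gamma, B \to C, B \Rightarrow C, \Delta$, and then apply $\mathsf{\to_L}$ with $B \to C$ as the principal formula, reducing further to the two premises $\Gamma, B, C \Rightarrow C, \Delta$ and $\Gamma, B \Rightarrow B, C, \Delta$. The first is an instance of the induction hypothesis for $C$ (with left context $\Gamma, B$ and right context $\Delta$), the second an instance of the induction hypothesis for $B$ (with left context $\Gamma$ and right context $C, \Delta$); grafting the two resulting $\infty$-proofs under these rule applications settles the case.

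For the modal case $A = \Box B$, apply the rule $(\Box)$ with $\Pi := (B)$, $n := 1$ and $A_1 := B$: its conclusion is precisely $\Gamma, \Box B \Rightarrow \Box B, \Delta$. Unfolding $\boxtimes(B) = B, \Box B$, the leftmost premise is $B, \Box B \Rightarrow B, \Box B$ and the single right premise is $B, \Box B \Rightarrow B$, and each of these is an instance of the induction hypothesis applied to the strictly smaller formula $B$ — with contexts $\Box B \Rightarrow \Box B$ and $\Box B \Rightarrow$ (empty succedent), respectively. Grafting the two $\infty$-proofs obtained from the induction hypothesis under the $(\Box)$-rule completes the argument.

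The only point that genuinely needs attention is this modal case: one has to verify that the choice $\Pi = (B)$ really makes the conclusion of $(\Box)$ coincide with the desired sequent, and that the premises it generates are again of the ``formula on both sides of $\Rightarrow$'' shape, so that the induction hypothesis is applicable to a proper subformula. Once the $\boxtimes$-notation is unwound this is immediate, so there is no real obstacle; in particular, since every proof constructed in this induction is finite, it is automatically an $\infty$-proof, and no check on infinite branches is required.
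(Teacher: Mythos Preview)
Your proof is correct and is precisely the standard structural induction on $A$ that the paper alludes to with ``Standard induction on the structure of $A$.'' The case analysis you give (atoms, $\bot$, implication, box) matches what the paper intends, and your treatment of the $(\Box)$ case with $\Pi=(B)$, $n=1$, $A_1=B$ is exactly the right instantiation.
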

\begin{proof}
Standard induction on the structure of $A$.
\end{proof}

\begin{lem}\label{Go-schema}
We have $\mathsf{Go}_{\infty}\vdash\Box(\Box(A \rightarrow \Box A) \rightarrow A) \Rightarrow \Box A$ for any formula $A$.
\end{lem}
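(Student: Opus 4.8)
The plan is to generalise to an arbitrary formula $A$ the explicit $\infty$-proof of $\Box(\Box(p\to\Box p)\to p)\Rightarrow\Box p$ constructed just above, which is the case $A=p$. Write $F:=\Box(A\to\Box A)\to A$, and let $\psi_A,\phi_A,\xi_A,\theta_A$ be obtained from the proof fragments $\psi,\phi,\xi,\theta$ by substituting $A$ for $p$ uniformly. Every inference occurring in those fragments is an application of the left or right implication rule (to the implications $A\to\Box A$ and $F$, or to side formulas) or of the rule $(\Box)$, and so remains a legal inference after the substitution; the only leaves that were initial sequents solely because $p$ is atomic are the topmost sequents marked $\mathsf{Ax}$, namely $\boxtimes F,A\Rightarrow A,\Box A,\Box A,\Box(A\to\Box A)$ in $\psi_A$, $\Box F,A\Rightarrow A,\Box A$ in $\phi_A$ and $\Box F,A\Rightarrow A$ in $\theta_A$. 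Each has the form $\Gamma',A\Rightarrow A,\Delta'$, hence is provable in $\mathsf{Go}_\infty$ by Lemma~\ref{AtoA}; replacing each such leaf by a fixed $\mathsf{Go}_\infty$-proof of it turns the fragments into genuine $\mathsf{Go}_\infty$-derivation fragments whose only remaining open leaves are the ``recursive-call'' sequents $\boxtimes F\Rightarrow\boxtimes A$, $\boxtimes F\Rightarrow A$, $\boxtimes F\Rightarrow\boxtimes(A\to\Box A)$ and $F,A,\Box F\Rightarrow\Box A$.

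Next I would assemble the tree exactly in the pattern of the displayed example: the sequent $\boxtimes F\Rightarrow\boxtimes A$ is derived by $\phi_A$, the sequent $\boxtimes F\Rightarrow A$ by $\theta_A$, the sequent $\boxtimes F\Rightarrow\boxtimes(A\to\Box A)$ by $\xi_A$, and each occurrence of $F,A,\Box F\Rightarrow\Box A$ by one application of $(\Box)$ to the derivations of $\boxtimes F\Rightarrow\boxtimes A$ and $\boxtimes F\Rightarrow A$. These mutually referential stipulations determine a unique (infinite) tree built entirely according to the rules of $\mathsf{Go}_\infty$ (in particular without $\mathsf{cut}$), and one further application of $(\Box)$ with $n=1$, $\Pi=F$, $A_1=A$ to the derivations of $\boxtimes F\Rightarrow\boxtimes A$ and $\boxtimes F\Rightarrow A$ produces the required root $\Box(\Box(A\to\Box A)\to A)\Rightarrow\Box A$.

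The one point beyond bookkeeping is the admissibility condition on $\infty$-proofs, that every infinite branch passes through a ``right'' premise of a $(\Box)$-rule infinitely often, and I would check it exactly as one does for the example. Each of $\phi_A$ and $\xi_A$ has both of its open leaves lying on or above a right premise of the $(\Box)$-inference internal to it, and every $(\Box)$-inference used to derive an occurrence of $F,A,\Box F\Rightarrow\Box A$ has $\boxtimes F\Rightarrow A$ (i.e.\ a $\theta_A$-conclusion) as its right premise; only $\theta_A$ has an open leaf — the sequent $\boxtimes F\Rightarrow\boxtimes(A\to\Box A)$ — that sits above the main rather than a right premise of its $(\Box)$-inference, but in the assembled tree that leaf is always replaced by a copy of $\xi_A$, within which a right $(\Box)$-premise is again crossed. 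Consequently a right $(\Box)$-premise is met within at most two consecutive fragments along any branch, so every infinite branch meets right $(\Box)$-premises infinitely often. There is no genuine mathematical obstacle here: the construction is a literal generalisation of the displayed example, with Lemma~\ref{AtoA} playing the role that atomicity of $p$ played there, and the only care needed is in tracking the fragment-and-recursion pattern and confirming the branch condition.
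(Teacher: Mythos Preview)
Your proposal is correct and follows exactly the approach of the paper's proof: substitute $A$ for $p$ in the displayed $\infty$-proof and replace the former initial sequents $\Gamma,p\Rightarrow p,\Delta$ by the $\mathsf{Go}_\infty$-proofs of $\Gamma,A\Rightarrow A,\Delta$ supplied by Lemma~\ref{AtoA}. You in fact go further than the paper by explicitly verifying the infinite-branch condition, which the paper leaves implicit.
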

\begin{proof}
Consider an example of $\infty$--proof for the sequent $\Box(\Box(p \rightarrow \Box p) \rightarrow p) \Rightarrow \Box p$ given above. We transform this example into an $\infty$--proof for $\Box(\Box(A \rightarrow \Box A) \rightarrow A) \Rightarrow A$ by replacing $p$ with $A$ and adding required $\infty$--proofs instead of initial sequents using Lemma \ref{AtoA}.  
\end{proof}

Recall that an inference rule is called admissible (in a given proof system) if, for any instance of the rule, the conclusion is provable whenever all premises are provable. 
\begin{lem}\label{weakening}
The rule
\[
\AXC{$\Gamma\Rightarrow\Delta$}
\LeftLabel{$\mathsf{wk}$}
\UIC{$\Pi,\Gamma\Rightarrow\Delta,\Sigma$}
\DisplayProof
\]
is admissible in the systems $\mathsf{Go_{Seq}} $ and $\mathsf{Go}_{\infty} +\mathsf{cut}$.

The rule
\[
\AXC{$\Gamma,\Pi,\Pi\Rightarrow\Delta$}
\LeftLabel{$\mathsf{ctr}$}
\UIC{$\Gamma,\Pi\Rightarrow\Delta$}
\DisplayProof
\]
is admissible in the system $\mathsf{Go_{Seq}}$.
\end{lem}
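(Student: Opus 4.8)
The statement splits into three independent claims; the two weakening parts are routine, and the contraction part carries the real content, so I would dispatch them in that order. For $\mathsf{wk}$ in $\mathsf{Go_{Seq}}$ I would induct on the height of the given derivation of $\Gamma\Rightarrow\Delta$, simply adjoining $\Pi$ to every antecedent and $\Sigma$ to every succedent occurring in it. An initial sequent stays initial after this enlargement; the rules $\mathsf{\to_L}$ and $\mathsf{\to_R}$ are reproduced after applying the induction hypothesis to their premises; and for $\mathsf{\Box_{Go}}$ the premise $\Box\Pi',\Pi',\Box(A\to\Box A)\Rightarrow A$ is left untouched, since the side multisets $\Gamma,\Delta$ of that rule are arbitrary and swallow $\Pi$ and $\Sigma$. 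The transformation is visibly height-preserving, a fact I would also record for use in the contraction argument.

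For $\mathsf{wk}$ in $\mathsf{Go}_\infty+\mathsf{cut}$ induction on the height of the whole proof is unavailable, but the key observation is that weakening touches only a finite top piece of an $\infty$-proof: tracing the adjoined $\Pi,\Sigma$ upwards through $\mathsf{\to_L}$, $\mathsf{\to_R}$ and $\mathsf{cut}$, on each branch one reaches an application of $(\Box)$, and its premises $\boxtimes\Pi'\Rightarrow\boxtimes(A_1,\dots,A_n)$ and $\boxtimes\Pi'\Rightarrow A_i$ do not mention the side contexts of the conclusion, so the subproofs standing above that application are copied verbatim. I would therefore define an operator $\mathsf{wk}_{\Pi;\Sigma}$ on $\mathcal P$ by recursion on the local height $\lvert\pi\rvert$, following the case distinction (1)--(5): enlarge the contexts on an initial sequent; on $\pi=\mathsf{\to_{R(A\to B)}}(\pi_0)$, $\mathsf{\to_{L(A\to B)}}(\pi_0,\pi_1)$, $\mathsf{cut_A}(\pi_0,\pi_1)$ recurse into the immediate subproofs, whose local height is smaller, and re-apply the rule; on $\pi=\mathsf{\Box_{\Gamma;\Delta}}(\pi_0,\dots,\pi_n)$ put $\mathsf{wk}_{\Pi;\Sigma}(\pi):=\mathsf{\Box_{\Pi,\Gamma;\Delta,\Sigma}}(\pi_0,\dots,\pi_n)$ and stop. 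Finally I would check that the resulting tree really is an $\infty$-proof: each infinite branch of $\mathsf{wk}_{\Pi;\Sigma}(\pi)$ agrees with a branch of $\pi$ outside a finite prefix, hence still meets a right premise of $(\Box)$ infinitely often.

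For $\mathsf{ctr}$ in $\mathsf{Go_{Seq}}$ I would first reduce to contracting a single formula $C$ and then iterate. The route I expect to work is the familiar one for $\mathsf{G3}$-style calculi: (a) prove height-preserving invertibility of $\mathsf{\to_L}$ and $\mathsf{\to_R}$ by a separate induction on height; (b) prove left and right contraction simultaneously by induction on the height of the derivation of $\Gamma,C,C\Rightarrow\Delta$ (respectively $\Gamma\Rightarrow C,C,\Delta$). When $C$ is not principal in the last rule, one pushes the contraction into the premises by the induction hypothesis and re-applies the rule; the $\mathsf{\Box_{Go}}$ step is unproblematic since its side contexts are arbitrary, while a contracted pair lying inside $\Box\Pi$ is handled by additionally contracting, in the shorter premise, the corresponding pair of unboxed copies in $\Pi$ (induction being on height only, the complexity of $C$ is irrelevant). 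The one delicate case is when $C$ is principal in $\mathsf{\to_L}$ or $\mathsf{\to_R}$: there one applies the invertibility lemma of (a) to the premises of the last rule, contracts the duplications it produces by the induction hypothesis --- and it is precisely here that the other-side contraction is consumed, which is why (b) must be run simultaneously for both sides --- and reassembles with one instance of $\mathsf{\to_L}$ (resp.\ $\mathsf{\to_R}$). I expect this principal-implication case, together with setting up the invertibility lemmas, to be the only genuine obstacle; everything about weakening and the non-principal cases of contraction is bookkeeping.
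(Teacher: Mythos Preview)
Your proposal is correct in spirit and matches the paper's one-line proof (``standard induction on the structure (local height)''); the weakening arguments are exactly right, and your recursion on local height for $\mathsf{Go}_\infty+\mathsf{cut}$ is precisely the construction the paper later spells out as the mapping $\mathsf{wk}_{\Pi;\Sigma}$.

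There is one small technical slip in the contraction argument. You plan to establish \emph{height-preserving} invertibility of $\to_L$ and $\to_R$ in $\mathsf{Go_{Seq}}$, but the initial sequents of $\mathsf{Go_{Seq}}$ are $\Gamma,A\Rightarrow A,\Delta$ for \emph{arbitrary} formulas $A$, not just atoms. In the base case where the displayed implication is itself the formula witnessing initiality, the inverted sequent need not be initial at height $0$ (e.g.\ inverting $\to_R$ on the axiom $A\to B\Rightarrow A\to B$ asks for $A\to B,A\Rightarrow B$, which is only derivable at height $1$), so height-preserving invertibility fails as literally stated. Two standard fixes work: either first replace the general identity axioms by derivations built from atomic axioms (an easy induction on $A$, using the already-proved weakening), after which height-preserving invertibility holds as in any $\mathsf{G3}$ calculus; or run the contraction argument by lexicographic induction on the pair (complexity of the contracted formula, height of the derivation), using plain admissible invertibility in the principal $\to_L$ case and invoking the outer hypothesis on the strictly smaller subformulas $A$ and $B$. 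Either way your simultaneous left/right strategy then goes through.
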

\begin{proof}
Standard induction on the structure (local height) 
of a proof of $\Gamma\Rightarrow\Delta$.
\end{proof}

\begin{thm}\label{seqtoinfcut}
If $\mathsf{Go_{Seq}}+\mathsf{cut}\vdash\Gamma\Rightarrow\Delta$, then $\mathsf{Go}_{\infty}+\mathsf{cut}\vdash\Gamma\Rightarrow\Delta$.
\end{thm}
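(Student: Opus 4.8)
The plan is to argue by induction on the height of a given derivation of $\Gamma\Rightarrow\Delta$ in $\mathsf{Go_{Seq}}+\mathsf{cut}$, constructing at each step an $\infty$-proof of the same sequent in $\mathsf{Go}_{\infty}+\mathsf{cut}$. The base cases are immediate: the initial sequent $\Gamma,\bot\Rightarrow\Delta$ is also an initial sequent of $\mathsf{Go}_{\infty}$, and the initial sequent $\Gamma,A\Rightarrow A,\Delta$ is provable in $\mathsf{Go}_{\infty}$ by Lemma~\ref{AtoA}. The rules $\mathsf{\to_L}$, $\mathsf{\to_R}$ and $\mathsf{cut}$ belong verbatim to $\mathsf{Go}_{\infty}+\mathsf{cut}$, so in these cases I would apply the induction hypothesis to the premises and reapply the same rule; joining valid $\infty$-proofs by a single such inference keeps every infinite branch inside one of the given subproofs, so the result is again an $\infty$-proof.

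The only substantial case is $\mathsf{\Box_{Go}}$. Here the induction hypothesis supplies an $\infty$-proof $\delta$ of $\Box\Pi,\Pi,\Box(A\to\Box A)\Rightarrow A$, and I must produce an $\infty$-proof of $\Gamma,\Box\Pi\Rightarrow\Box A,\Delta$; by admissibility of weakening (Lemma~\ref{weakening}) it suffices to obtain one of $\Box\Pi\Rightarrow\Box A$. Write $F:=\Box(A\to\Box A)\to A$. Applying $\mathsf{\to_R}$ to $\delta$ gives $\Pi,\Box\Pi\Rightarrow F$. Weakening this to $\Pi,\Box\Pi\Rightarrow F,F,\Box F$ and cutting it, on the cut formula $F$, against the instance $\Pi,\Box\Pi,F\Rightarrow F,\Box F$ of Lemma~\ref{AtoA}, I get $\Pi,\Box\Pi\Rightarrow F,\Box F$. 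One application of the rule ($\Box$) with $n=1$ and $A_1=F$, whose leftmost premise is $\Pi,\Box\Pi\Rightarrow F,\Box F$ and whose right premise is $\Pi,\Box\Pi\Rightarrow F$, then produces $\Box\Pi\Rightarrow\Box F$. Finally, weakening to $\Box\Pi\Rightarrow\Box F,\Box A$ and cutting, on the cut formula $\Box F$, against the weakening $\Box\Pi,\Box F\Rightarrow\Box A$ of the $\infty$-proof supplied by Lemma~\ref{Go-schema}, I obtain $\Box\Pi\Rightarrow\Box A$, and then $\Gamma,\Box\Pi\Rightarrow\Box A,\Delta$ by one more weakening.

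It remains to verify that the tree so built is genuinely an $\infty$-proof, i.e. that every infinite branch passes through a right premise of a ($\Box$)-rule infinitely often. This should be routine: apart from finitely many added inferences the tree is assembled from $\delta$, from the $\infty$-proof given by Lemma~\ref{Go-schema}, and from finitely many weakened copies of these, each of which is already a valid $\infty$-proof, so every infinite branch eventually stays inside one of them and inherits the required property. I expect this branch condition to be the only point needing real care — and it is precisely why I would route the argument through the formula $F$ and Lemma~\ref{Go-schema} rather than trying to derive $\Box\Pi\Rightarrow\Box A$ by an ad hoc unfolding of the rule ($\Box$): such an unfolding tends to create an infinite branch descending through leftmost premises of ($\Box$)-rules only, which is not allowed, whereas here the genuinely cyclic part of the proof is entirely confined to the already-justified $\infty$-proof of Lemma~\ref{Go-schema}.
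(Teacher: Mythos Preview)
Your proof is correct and follows essentially the same route as the paper: both introduce $F=\Box(A\to\Box A)\to A$, apply $\mathsf{\to_R}$ and the $(\Box)$-rule (with $n=1$, $A_1=F$) to obtain $\Box F$ from the inductively given $\infty$-proof, and then cut against the $\infty$-proof of $\Box F\Rightarrow\Box A$ supplied by Lemma~\ref{Go-schema}. One cosmetic point: your detour through a cut with the instance $\Pi,\Box\Pi,F\Rightarrow F,\Box F$ of Lemma~\ref{AtoA} is unnecessary, since $\Pi,\Box\Pi\Rightarrow F,\Box F$ follows from $\Pi,\Box\Pi\Rightarrow F$ by a single weakening---the paper does exactly this.
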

\begin{proof}
Assume $\pi$ is a proof of $\Gamma\Rightarrow\Delta$ in $\mathsf{Go_{Seq}}+\mathsf{cut}$. By induction on the size of $\pi$ we prove $\mathsf{Go}_{\infty}+\mathsf{cut}\vdash\Gamma\Rightarrow\Delta$. 

If $\Gamma \Rightarrow \Delta $ is an initial sequent of $\mathsf{Go_{Seq}}+\mathsf{cut}$, then it is provable in $\mathsf{Go}_{\infty}+\mathsf{cut}$ by Lemma \ref{AtoA}.
Otherwise, consider the last application of an inference rule in $\pi$. 

The only non-trivial case is when the proof $\pi$ has the form 
\[
\AXC{$\pi^\prime$}
\noLine
\UIC{$\Box \Pi,\Pi,\Box(A\to\Box A)\Rightarrow A$}
\LeftLabel{$\mathsf{\Box_{Go}}$}
\RightLabel{ ,}
\UIC{$\Sigma,\Box\Pi\Rightarrow \Box A, \Lambda$}
\DisplayProof
\]
where $\Sigma,\Box\Pi = \Gamma$ and $\Box A, \Lambda = \Delta$. By the induction hypothesis there is an $\infty$--proof $\xi$ of $\boxtimes \Pi,\Box(A\to\Box A)\Rightarrow A$ in $\mathsf{Go}_{\infty}+\mathsf{cut}$.

The required $\infty$--proof for $\Sigma,\Box\Pi\Rightarrow \Box A, \Delta$ has the form:
\[
\AXC{$\xi$}
\noLine
\UIC{$\boxtimes \Pi, \Box(A\to\Box A)\Rightarrow A$}
\LeftLabel{$\mathsf{\to_R}$}
\UIC{$\boxtimes \Pi\Rightarrow F$}
\LeftLabel{$\mathsf{wk}$}
\UIC{$\boxtimes \Pi\Rightarrow\boxtimes F$}
\AXC{$\xi$}
\LeftLabel{$\mathsf{\to_R}$}
\UIC{$\boxtimes \Pi\Rightarrow F$}
\LeftLabel{$\Box$}
\BIC{$\Sigma,\Box \Pi\Rightarrow\Box F,\Box A,\Lambda$}
\AXC{$\chi$}
\noLine
\UIC{$\Box F\Rightarrow \Box A$}
\LeftLabel{$\mathsf{wk}$}
\UIC{$\Sigma,\Box \Pi,\Box F\Rightarrow \Box A,\Lambda$}
\LeftLabel{$\mathsf{cut}$}
\BIC{$\Sigma,\Box\Pi\Rightarrow \Box A, \Lambda$}
\DisplayProof
\]

where $F= \Box(A \rightarrow \Box A) \rightarrow A$ and $\chi$ is an $\infty$--proof of $\Box F \Rightarrow \Box A$, which exists by Lemma \ref{Go-schema}.

The cases of other inference rules being last in $\pi$ are straightforward, so we omit them.

\end{proof}


For a sequent $\Gamma\Rightarrow\Delta$, let $Sub(\Gamma\Rightarrow\Delta)$ be the set of all subformulas of the formulas from $\Gamma \cup\Delta$.
For a finite set of formulas $\Lambda$, let $\Lambda^\ast$ be the set $\{A\to\Box A\mid A\in\Lambda\}$.

\begin{lem} \label{translation}
If $\mathsf{Go_\infty}\vdash \Gamma\Rightarrow\Delta$, then $\mathsf{Go_{Seq}} \vdash \Box(\Lambda_1^\ast),\Lambda_2^\ast,\boxtimes\Omega,\Gamma\Rightarrow\Delta$ for any finite sets of formulas $\Lambda_1, \Lambda_2$, and $\Omega$ such that $\Lambda_2\subset\Lambda_1$.
\end{lem}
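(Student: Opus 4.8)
The plan is to argue by induction on the proof $\pi$ of $\Gamma\Rightarrow\Delta$ in $\mathsf{Go_\infty}$, exploiting the subformula property of $\mathsf{Go_\infty}$: every sequent occurring in $\pi$ consists of subformulas of $\Gamma\cup\Delta$. Hence, fixing once and for all a finite set $U$ of formulas that is closed under subformulas and contains $\Gamma\cup\Delta$ together with all formulas of the shapes $A\to\Box A$ and $\Box(A\to\Box A)\to A$ for $A\in Sub(\Gamma\Rightarrow\Delta)$, all the sets that the induction forces us to enlarge stay inside $U$. The induction measure will be lexicographic: first the number of elements of $U$ not yet in $\Lambda_1$ (with a secondary count of $\lvert\Lambda_1\setminus\Lambda_2\rvert$, so that $\Lambda_2$ may still catch up with $\Lambda_1$ even when $\Lambda_1$ is saturated), and then the local height $\lvert\pi\rvert$. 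The base case is trivial: an initial sequent of $\mathsf{Go_\infty}$ stays an initial sequent of $\mathsf{Go_{Seq}}$ after the context has been added, so the claim follows by admissibility of weakening (Lemma~\ref{weakening}). If $\pi=\mathsf{\to_{R(A\to B)}}(\pi_0)$ or $\pi=\mathsf{\to_{L(A\to B)}}(\pi_0,\pi_1)$, the premises have strictly smaller local height, so I would apply the induction hypothesis to them with the same $\Lambda_1,\Lambda_2,\Omega$ and reapply the propositional rule; the added context is inert.

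The substantial case is $\pi=\mathsf{\Box_{\Gamma;\Delta}}(\pi_0,\pi_1,\dots,\pi_n)$, with conclusion $\Gamma,\Box\Pi\Rightarrow\Box A_1,\dots,\Box A_n,\Delta$, leftmost premise $\pi_0$ proving $\boxtimes\Pi\Rightarrow\boxtimes(A_1,\dots,A_n)$, and right premises $\pi_i$ proving $\boxtimes\Pi\Rightarrow A_i$. I would introduce the boxes $\Box A_1,\dots,\Box A_n$ on the right one at a time by the rule $\mathsf{\Box_{Go}}$ of $\mathsf{Go_{Seq}}$, keeping in its ``$\Box\Pi$''-slot all the boxed formulas of the antecedent ($\Box\Pi$, $\Box\Omega$ and $\Box(\Lambda_1^\ast)$); after the reflection performed by $\mathsf{\Box_{Go}}$ the premise for $\Box A_i$ again has the ``$\boxtimes$''-shape $\boxtimes\Pi,\boxtimes\Omega,\Lambda_1^\ast,\Box(\Lambda_1^\ast),\Box(A_i\to\Box A_i)\Rightarrow A_i$. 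If $A_i\notin\Lambda_1$, I would apply the induction hypothesis to $\pi_i$ with $\Lambda_1$ replaced by $\Lambda_1\cup\{A_i\}$, with $\Lambda_2$ replaced by $\Lambda_1$, and with $\Omega$ enlarged by $\Pi$ (and by $\Box(A_i\to\Box A_i)\to A_i$, which will be needed further up); weakening then matches the result against the premise, and the first component of the measure has strictly decreased. If $A_i\in\Lambda_1\setminus\Lambda_2$, I would do the same but keep $\Lambda_1$ fixed and only move $\Lambda_2$ up to $\Lambda_1$, so the secondary count decreases. The leftmost premise $\pi_0$, of strictly smaller local height, is handled by the induction hypothesis with the current $\Lambda_1,\Lambda_2$ and $\Omega$ enlarged by $\Pi$; it is what bridges the gap between the $(n+1)$-ary rule $\mathsf{\Box}$ of $\mathsf{Go_\infty}$ and the unary $\mathsf{\Box_{Go}}$ of $\mathsf{Go_{Seq}}$, and what makes the remaining sub-case go through.

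The remaining sub-case, $A_i\in\Lambda_2$, is the one I expect to be the main obstacle. Here neither $\Lambda_1$ nor $\Lambda_2$ can be enlarged, so descending into $\pi_i$ would not decrease the measure, and the branch must be closed by propositional and modal reasoning from the context already accumulated --- this is exactly where $\mathsf{Go}$ behaves worse than $\mathsf{GL}$ and $\mathsf{Grz}$. The idea is that, by the time $A_i$ has entered $\Lambda_2$, it has already passed twice through the machinery above, so the context contains not merely $\Box(A_i\to\Box A_i)$ and $A_i\to\Box A_i$ but also $\Box\bigl(\Box(A_i\to\Box A_i)\to A_i\bigr)$; this is the reason $\Box(A_i\to\Box A_i)\to A_i$ was fed into $\Omega$, and it is available there because it is obtained from the premise of $\mathsf{\Box_{Go}}$ by $\mathsf{\to_R}$, or reconstructed with the help of $\pi_0$ when necessary. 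Once $\Box\bigl(\Box(A_i\to\Box A_i)\to A_i\bigr)$ sits in the antecedent, $\Box A_i$ is derivable in $\mathsf{Go_{Seq}}$ without cut: applying $\mathsf{\Box_{Go}}$ to introduce $\Box A_i$ and then $\mathsf{\to_L}$ to the implication $\Box(A_i\to\Box A_i)\to A_i$ leaves only instances of the axiom $\Gamma',C\Rightarrow C,\Delta'$.

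The bulk of the work, then, is in checking that this bookkeeping is globally consistent: that the contexts one is forced to carry really do contain $\Box\bigl(\Box(A\to\Box A)\to A\bigr)$ whenever $A$ has reached $\Lambda_2$, that enlarging $\Omega$ in this way never spoils the matchings performed by weakening, and that $U$, the $\Lambda_i$ and $\Omega$ all remain within fixed finite sets under these operations. It is precisely this extra layer of bookkeeping, absent in the $\mathsf{Grz}$ treatment, that accounts for the heavier induction measure alluded to in the introduction; the routine verifications for the remaining shapes of the last rule of $\pi$ are then straightforward.
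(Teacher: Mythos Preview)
Your handling of the saturated sub-case (all $A_i\in\Lambda_2$) does not work as written, and the bookkeeping you hope will save it is not consistent with the statement being proved.

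The lemma is quantified over \emph{arbitrary} $\Lambda_1,\Lambda_2,\Omega$ with $\Lambda_2\subset\Lambda_1$; hence the induction hypothesis is only available for such arbitrary triples. Your plan to have $\Box\bigl(\Box(A_i\to\Box A_i)\to A_i\bigr)$ sitting in $\boxtimes\Omega$ whenever $A_i\in\Lambda_2$ is an extra invariant that is \emph{not} part of the statement, so it is not delivered by the IH. If you try to enforce it by feeding $F_{A_i}:=\Box(A_i\to\Box A_i)\to A_i$ into $\Omega$ at each recursive call, the IH result then carries $F_{A_i}$ and $\Box F_{A_i}$ in its antecedent; but the premise of the $\mathsf{\Box_{Go}}$ application you need to match (built from the boxed formulas of the \emph{current} conclusion, where $F_{A_i}$ is absent) contains neither, and there is nothing to contract them against. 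So the very act of planting $F_{A_i}$ in $\Omega$ destroys the match. Strengthening the invariant to $\{F_A:A\in\Lambda_1\}\subset\Omega$ runs into the same obstruction one step earlier.

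The paper closes this case differently and without $\mathsf{\Box_{Go}}$. When every $A_i\in\Lambda_2$ (and $\Pi\subset\Omega$), it applies the IH to the \emph{leftmost} premise $\pi_0$ with the \emph{same} $\Lambda_1,\Lambda_2,\Omega$ (legitimate because $\lvert\pi_0\rvert<\lvert\pi\rvert$), obtaining $\Box(\Lambda_1^\ast),\Lambda_2^\ast,\boxtimes\Omega,\boxtimes\Pi\Rightarrow A_1,\Box A_1,\ldots,A_n,\Box A_n$. Now the point of $\Lambda_2$ becomes visible: since $A_i\in\Lambda_2$, the implication $A_i\to\Box A_i$ is literally a member of $\Lambda_2^\ast$ in the antecedent, and a single $\mathsf{\to_L}$ (with an axiom as the other premise) turns the pair $A_i,\Box A_i$ on the right into $\Box A_i$. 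Iterating over $i$ and then contracting/weakening gives the target. No Grzegorczyk-style axiom instance is needed.

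There is a second gap: your triple measure $(\lvert U\setminus\Lambda_1\rvert,\;\lvert\Lambda_1\setminus\Lambda_2\rvert,\;\lvert\pi\rvert)$ omits any control over $\Omega$. The manoeuvre above only succeeds when $\Pi\subset\Omega$, because the IH on $\pi_0$ leaves an unboxed copy of $\Pi$ in the antecedent that must be contracted into $\boxtimes\Omega$. When $\Pi\not\subset\Omega$ one must instead apply $\mathsf{\Box_{Go}}$ and invoke the IH on a \emph{right} premise $\pi_j$ (whose local height is unrelated to $\lvert\pi\rvert$) with $\Omega$ enlarged to $\Omega\cup\Pi$; this requires a separate component $\lvert Sub(\Gamma\Rightarrow\Delta)\setminus\Omega\rvert$ in the lexicographic measure, as in the paper's quadruple induction.
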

\begin{proof}
Assume $\pi$ is an $\infty$--proof of the sequent $\Gamma\Rightarrow\Delta$ in $\mathsf{Go}_\infty$ and $\Lambda_1$, $\Lambda_2$, and $\Omega$ are finite sets of formulas, such that $\Lambda_2\subset\Lambda_1$.

We prove that $\mathsf{Go_{Seq}} \vdash \Box(\Lambda_1^\ast),\Lambda_2^\ast,\boxtimes\Omega,\Gamma\Rightarrow\Delta$ by quadruple induction: by induction on the number of elements in the finite set $Sub(\Gamma\Rightarrow\Delta)\backslash \Lambda_1$ with a subinduction on on the number of elements in the finite set $Sub(\Gamma\Rightarrow\Delta)\backslash \Lambda_2$, subinduction on the number of elements in the finite set $Sub(\Gamma\Rightarrow\Delta)\backslash \Omega$, and with subinduction on $\lvert \pi \rvert$. 


If $\lvert \pi \rvert=0$, then $\Gamma\Rightarrow\Delta$ is an initial sequent. We see that the sequent $\Box(\Lambda_1^\ast),\Lambda_2^\ast,\boxtimes\Omega,\Gamma\Rightarrow\Delta$ is an initial sequent and it is provable in $\mathsf{Go_{Seq}}$.
Otherwise, consider the last application of an inference rule in $\pi$. 

Case 1. Suppose that $\pi=\mathsf{\to_{R(A\to B)}}(\pi_0)$. 
Notice that $\lvert \pi_0 \rvert < \lvert \pi \rvert $. By the induction hypothesis for $\Lambda_2$, $\Lambda_1$, $\Omega$, and $\pi_0$, the sequent $\Box(\Lambda_1^\ast),\Lambda_2^\ast,\boxtimes\Omega,\Gamma,A\Rightarrow B,\Sigma$, where $A\to B,\Sigma = \Delta$, is provable in $\mathsf{Go_{Seq}}$. 
Applying the rule ($\mathsf{\to_R}$) to it, we obtain that the sequent $\Box(\Lambda_1^\ast),\Lambda_2^\ast,\boxtimes\Omega,\Gamma\Rightarrow\Delta$ is provable in $\mathsf{Go_{Seq}}$.

Case 2. Suppose that $\pi=\mathsf{\to_{L(A\to B)}}(\pi_0,\pi_1)$ 
We see that $\lvert \pi_0 \rvert < \lvert \pi \rvert $. By the induction hypothesis for $\Lambda_2$, $\Lambda_1$, $\Omega$, and $\pi_0$, the sequent $\Box(\Lambda_1^\ast),\Lambda_2^\ast,\boxtimes\Omega,\Sigma, B\Rightarrow \Delta$, where $\Sigma, A\to B = \Gamma$, is provable in  $\mathsf{Go_{Seq}}$. Analogously, we have $\mathsf{Grz_{Seq}} \vdash \Box(\Lambda_1^\ast),\Lambda_2^\ast,\boxtimes\Omega,\Sigma \Rightarrow A,\Delta$. Applying the rule ($\mathsf{\to_L}$), we obtain that the sequent $\Box(\Lambda_1^\ast),\Lambda_2^\ast,\boxtimes\Omega,\Gamma \Rightarrow\Delta$ is provable in $\mathsf{Go_{Seq}}$.

Case 3. Suppose that $\pi$ has the form
\[
\AXC{$\pi_0$}
\noLine
\UIC{$\boxtimes \Pi \Rightarrow \boxtimes(A_1,\ldots,A_n)$}
\AXC{$\pi_1$}
\noLine
\UIC{$\boxtimes \Pi\Rightarrow A_1$}
\AXC{$\ldots$}
\AXC{$\pi_n$}
\noLine
\UIC{$\boxtimes \Pi\Rightarrow A_n$}
\QuaternaryInfC{$\Phi, \Box \Pi\Rightarrow \Box A_1,\ldots,\Box A_n, \Sigma$}
\DisplayProof
\]
where $\Phi, \Box \Pi = \Gamma$ and $\Box A_1,\ldots,\Box A_n, \Sigma =\Delta$.

Subcase 3.1: For some $i$, we have $A_i\notin\Lambda_1$. We have that the number of elements in $Sub(\Box\Pi,\Pi\Rightarrow A)\backslash(\Lambda_1\cup \{A_i\})$ is strictly less than the number of elements in $Sub(\Phi, \Box \Pi\Rightarrow \Box A_1,\ldots,\Box A_n, \Sigma)\backslash\Lambda_1$. By the induction hypothesis for $\Lambda_1\cup\{A_i\}$, $\Lambda_1$, $\varnothing$ and $\pi_i$, the sequent $\Box(\Lambda_1^\ast),\Box(A_i\to\Box A_i),\Lambda_1^\ast, \Box \Pi,\Pi \Rightarrow A$ is provable in $\mathsf{Go_{Seq}}$. Then we have
\[
\AXC{$\Box(\Lambda_1^\ast),\Box(A_i\to\Box A_i),\Lambda_1^\ast,\Box \Pi,\Pi \Rightarrow A_i$}
\LeftLabel{$\mathsf{\Box_{Go}}$}
\RightLabel{ .}
\UIC{$\Lambda_2^\ast,\Box(\Lambda_1^\ast),\boxtimes\Omega,\Phi, \Box \Pi\Rightarrow \Box A_1,\ldots,\Box A_i,\ldots,\Box A_n, \Sigma$}
\DisplayProof
\]

Subcase 3.2: For all $i$, we have $A_i\in\Lambda_1$, but there is $i$, such that $A_i\notin\Lambda_2$. We have that the number of elements in $Sub(\Box\Pi,\Pi\Rightarrow A_i)\backslash\Lambda_1$ is strictly less than the number of elements in $Sub(\Phi, \Box \Pi\Rightarrow \Box A_1,\ldots,\Box A_n, \Sigma)\backslash\Lambda_2$. By the induction hypothesis for $\Lambda_1$, $\Lambda_1$, $\varnothing$ and $\pi_i$, the sequent $\Box(\Lambda_1^\ast),\Lambda_1^\ast, \Box \Pi,\Pi \Rightarrow A_i$ is provable in $\mathsf{Go_{Seq}}$. Then we have
\[
\AXC{$\Box(\Lambda_1^\ast),\Lambda_1^\ast, \Box \Pi,\Pi \Rightarrow A_i$}
\LeftLabel{$\mathsf{weak}$}
\UIC{$\Box(\Lambda_1^\ast),\Lambda_1^\ast,\Box \Pi,\Pi ,\Box(A_i\to\Box A_i)\Rightarrow A_i$}
\LeftLabel{$\mathsf{\Box_{Go}}$}
\RightLabel{ .}
\UIC{$\Lambda_2^\ast,\Box(\Lambda_1^\ast),\boxtimes\Omega,\Phi, \Box \Pi\Rightarrow \Box A_1,\ldots,\Box A_i,\ldots,\Box A_n, \Sigma$}
\DisplayProof
\]

Subcase 3.3: For all $i$, we have $A_i\in\Lambda_2\subset\Lambda_1$, but there is a formula $F$ in $\Pi$, such that $F\notin\Omega$. We have that the number of elements in $Sub(\Box\Pi,\Pi\Rightarrow A_1)\backslash(\Omega\cup\Pi)$ is strictly less than the number of elements in $Sub(\Phi, \Box \Pi\Rightarrow \Box A_1,\ldots,\Box A_n, \Sigma)\backslash\Omega$. By the induction hypothesis for $\Lambda_1$, $\Lambda_1$, $\Omega\cup\Pi$ and $\pi_1$, the sequent $\Box(\Lambda_1^\ast),\Lambda_1^\ast,\boxtimes\Omega,\boxtimes(\Pi\backslash\Omega), \boxtimes\Pi \Rightarrow A_1$ is provable in $\mathsf{Go_{Seq}}$. Then we have
\[
\AXC{$\Box(\Lambda_1^\ast),\Lambda_1^\ast,\boxtimes\Omega,\boxtimes(\Pi\backslash\Omega), \boxtimes \Pi \Rightarrow A_1$}
\LeftLabel{$\mathsf{weak}$}
\UIC{$\Box(\Lambda_1^\ast),\Lambda_1^\ast,\boxtimes\Omega,\boxtimes(\Pi\backslash\Omega), \boxtimes \Pi,\Box(A\to\Box A) \Rightarrow A_1$}
\LeftLabel{$\mathsf{\Box_{Go}}$}
\RightLabel{ .}
\UIC{$\Lambda_2^\ast,\Box(\Lambda_1^\ast),\Box\Omega,\Omega,\Phi,\Box(\Pi\backslash\Omega),\Box(\Pi\backslash\Omega), \Box (\Pi\cap\Omega)\Rightarrow \Box A_1,\ldots,\Box A_n, \Sigma$}
\LeftLabel{$\mathsf{ctr}$}
\UIC{$\Lambda_2^\ast,\Box(\Lambda_1^\ast),\boxtimes\Omega,\Phi,\Box \Pi\Rightarrow \Box A_1,\ldots,\Box A_n, \Sigma$}
\DisplayProof
\]

Subcase 3.4: For all $i$, we have $A_i\in\Lambda_2\subset\Lambda_1$ and $\Pi\subset\Omega$. We see that $\lvert \pi_0 \rvert < \lvert \pi \rvert$. By the induction hypothesis for $\Lambda_1$, $\Lambda_2$, $\Omega$ and $\pi_0$ the sequent $\Box(\Lambda_1^\ast),\Lambda_2^\ast,\boxtimes\Omega,\boxtimes \Pi \Rightarrow \boxtimes(A_1,\ldots,A_n)$ is provable in $\mathsf{Go_{Seq}}$. Let $\Psi=\Box(\Lambda_1^\ast),\Lambda_2^\ast,\boxtimes\Omega$. Then we have
\[
\AXC{$\mathsf{Ax}$}
\noLine
\UIC{$\Psi,\boxtimes \Pi, \Box A_1\Rightarrow \boxtimes(A_1,\ldots,A_n)$}
\AXC{$\Psi,\boxtimes \Pi \Rightarrow \boxtimes(A_1,\ldots,A_n)$}
\LeftLabel{$\to_L$}
\BIC{$\Box(\Lambda_1^\ast),\Lambda_2^\ast, A_1\to\Box A_1, \boxtimes\Omega,\boxtimes \Pi \Rightarrow \Box A_1,\boxtimes(A_2,\ldots,A_n)$}
\LeftLabel{$\mathsf{ctr}$}
\UIC{$\Box(\Lambda_1^\ast),\Lambda_2^\ast,\boxtimes\Omega, \boxtimes \Pi \Rightarrow \Box A_1,\boxtimes(A_2,\ldots,A_n)$}
\noLine
\UIC{$\vdots$}
\noLine
\UIC{$\Box(\Lambda_1^\ast),\Lambda_2^\ast, A_n\to\Box A_n,\boxtimes\Omega, \boxtimes \Pi \Rightarrow \Box A_1,\ldots,\Box A_n$}
\LeftLabel{$\mathsf{ctr}$}
\UIC{$\Box(\Lambda_1^\ast),\Lambda_2^\ast,\Box\Omega,\Omega\backslash\Pi,\Pi,\Box \Pi,\Pi\Rightarrow \Box A_1,\ldots,\Box A_n$}
\LeftLabel{$\mathsf{ctr}$}
\UIC{$\Box(\Lambda_1^\ast),\Lambda_2^\ast,\boxtimes\Omega,\Box \Pi\Rightarrow \Box A_1,\ldots,\Box A_n$}
\LeftLabel{$\mathsf{weak}$}
\RightLabel{.}
\UIC{$\Box(\Lambda_1^\ast),\Lambda_2^\ast,\boxtimes\Omega,\Phi,\Box \Pi\Rightarrow \Box A_1,\ldots,\Box A_n,\Sigma$}
\DisplayProof
\]
\end{proof}
From Lemma \ref{translation} we immediately obtain the following theorem.
\begin{theorem}\label{inftoseq}
If $\mathsf{Go_\infty}\vdash \Gamma\Rightarrow\Delta$, then $\mathsf{Go_{Seq}} \vdash \Gamma\Rightarrow\Delta$.
\end{theorem}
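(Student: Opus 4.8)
The plan is to read off Theorem~\ref{inftoseq} as the special case of Lemma~\ref{translation} in which all three parameter sets are empty. Concretely, assume $\mathsf{Go_\infty}\vdash\Gamma\Rightarrow\Delta$ and apply Lemma~\ref{translation} with $\Lambda_1=\Lambda_2=\Omega=\varnothing$. The side condition $\Lambda_2\subset\Lambda_1$ holds trivially. Since $\varnothing^\ast=\varnothing$, the multisets $\Box(\Lambda_1^\ast)$, $\Lambda_2^\ast$ and $\boxtimes\Omega$ are all empty, so the conclusion $\mathsf{Go_{Seq}}\vdash\Box(\Lambda_1^\ast),\Lambda_2^\ast,\boxtimes\Omega,\Gamma\Rightarrow\Delta$ collapses to exactly $\mathsf{Go_{Seq}}\vdash\Gamma\Rightarrow\Delta$, which is the desired statement.

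There is no residual obstacle at this point: all the real work has already been carried out in the quadruple induction proving Lemma~\ref{translation}. The delicate part of that argument — and hence the place where, conceptually, the content of this theorem lives — is Case~3, the treatment of the $(\Box)$ rule of $\mathsf{Go_\infty}$. There one either enlarges $\Lambda_1$ or $\Lambda_2$ by some $A_i$, or enlarges $\Omega$ by the principal boxed context $\Pi$ (in each instance strictly decreasing one of the first three induction measures), or, once $\Pi\subseteq\Omega$ and every $A_i$ has already been absorbed, simulates the $\mathsf{Go_{Seq}}$-rule $\mathsf{\Box_{Go}}$ by unfolding the accumulated implications $A_i\to\Box A_i$ with $(\to_\mathsf{L})$ together with the admissible weakening and contraction of Lemma~\ref{weakening}. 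The extra parameters $\Lambda_1,\Lambda_2$ are forced precisely because the $(\Box)$ rule of $\mathsf{Go_\infty}$ has unbounded arity, so that a single bookkeeping set (as would suffice for $\mathsf{Grz_\infty}$) is not enough; this is the increase in induction measure alluded to in the introduction. For the theorem itself, however, only the instantiation above is needed, and it is immediate.
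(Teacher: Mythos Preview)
Your proposal is correct and matches the paper's own argument exactly: the paper simply states that the theorem follows immediately from Lemma~\ref{translation}, and your instantiation $\Lambda_1=\Lambda_2=\Omega=\varnothing$ is precisely how that specialization works. The additional commentary you give on Case~3 of the lemma is accurate but goes beyond what is needed for the theorem itself.
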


\section{Ultrametric spaces}
\label{SecUlt}
In this section we define ultrametrics on some spaces concerning $\infty$-proofs. For some basic notions of the theory of ultrametric spaces (cf. \cite{Shor}) and proofs please see Appendix. 

Consider the set $\mathcal P$ of all $\infty$-proofs of the system $\mathsf{Go}_{\infty} +\mathsf{cut} $. An ultrametric $d_{\mathcal P} \colon {\mathcal P} \times {\mathcal P} \to [0,1]$ on $\mathcal P$ is defined by
\[d_{\mathcal P}(\pi,\tau) = 
\inf\{\frac{1}{2^n} \mid  \pi \sim_n \tau\}.
\]

\begin{prop}\label{ComplP}
$(\mathcal{P},d_{\mathcal P})$ is a (spherically) complete ultrametric space.
\end{prop}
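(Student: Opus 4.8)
The plan is to verify the two standard properties: that $d_{\mathcal P}$ really is an ultrametric, and that the space is spherically complete (which for ultrametric spaces implies ordinary completeness). First I would check the ultrametric axioms. Symmetry is immediate from the symmetry of the relations $\sim_n$. For the identity of indiscernibles, note that $d_{\mathcal P}(\pi,\tau)=0$ iff $\pi\sim_n\tau$ for all $n$; since every node of an $\infty$-proof lies within some finite fragment (each branch is cut at the $n$-th right $\Box$-premise, and by the branch condition such a premise always exists far enough down, while nodes at finite distance from the root with no right $\Box$-premise below them are captured once $n$ exceeds the relevant local heights), agreement on all $n$-fragments forces $\pi=\tau$. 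The key inequality is the strong (ultrametric) triangle inequality $d_{\mathcal P}(\pi,\rho)\le\max\{d_{\mathcal P}(\pi,\tau),d_{\mathcal P}(\tau,\rho)\}$, which follows from the basic observation that the relations $\sim_n$ are equivalence relations and are nested: $\pi\sim_{n+1}\tau$ implies $\pi\sim_n\tau$, because the $n$-fragment is obtained from the $(n+1)$-fragment by further truncation. Hence if $\pi\sim_n\tau$ and $\tau\sim_n\rho$ then $\pi\sim_n\rho$, so $\{n:\pi\sim_n\rho\}\supseteq\{n:\pi\sim_n\tau\}\cap\{n:\tau\sim_n\rho\}$, and taking infima of $2^{-n}$ over these sets gives the claim.

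Next I would establish spherical completeness: every chain of closed balls $B_1\supseteq B_2\supseteq\cdots$ (indexed by a linearly ordered set, with radii among the values $2^{-n}$) has nonempty intersection. Since the only radii that occur are $2^{-n}$ for $n\in\mathbb N$ (together with $0$), and a closed ball of radius $2^{-n}$ around $\pi$ is exactly the $\sim_n$-equivalence class of $\pi$, a decreasing chain of balls corresponds to a decreasing chain of $\sim_n$-classes for a nondecreasing sequence of indices $n$. If the sequence of indices is bounded the intersection is trivially the smallest ball in the chain; otherwise, choosing representatives $\pi_k$ of cofinally many balls $B_{n_k}$ with $n_k\to\infty$, the compatibility $\pi_k\sim_{n_k}\pi_{k'}$ for $k\le k'$ means that the $n_k$-fragments form a coherent system of finite trees, each extending the previous. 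I would then assemble the limit $\infty$-proof $\pi^\ast$ as the (direct limit) union of these fragments: a node belongs to $\pi^\ast$ if it belongs to some $n_k$-fragment, with the label it carries there, and coherence guarantees this is well-defined.

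The main obstacle, and the step that needs the most care, is checking that this union $\pi^\ast$ is genuinely an $\infty$-proof of $\mathsf{Go}_\infty+\mathsf{cut}$ — in particular that it satisfies the correctness conditions, most notably that every infinite branch passes through a right $\Box$-premise infinitely often. Every node of $\pi^\ast$ together with its immediate successors already appears inside some $n_k$-fragment (taking $k$ large enough so that the node is not on a truncation leaf), so locally $\pi^\ast$ is built by the inference rules and its leaves are initial sequents; this part is routine. For the branch condition, suppose an infinite branch $\beta$ of $\pi^\ast$ passed through only finitely many right $\Box$-premises, say $m$ of them. Then $\beta$ would lie entirely within the $(m+1)$-fragment of $\pi^\ast$; but that fragment is finite (it is the corresponding finite fragment of some $\pi_k$ for $k$ large), contradicting infinitude of $\beta$. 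Hence $\pi^\ast\in\mathcal P$, and since $\pi^\ast\sim_{n_k}\pi_k$ by construction, $\pi^\ast$ lies in every $B_{n_k}$ and therefore in every $B_j$, so $\bigcap_j B_j\ne\varnothing$. This gives spherical completeness, and completeness follows since every Cauchy sequence generates such a nested chain of balls.
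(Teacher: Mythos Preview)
The paper does not actually supply a proof of this proposition: it is stated in Section~\ref{SecUlt} and restated in the Appendix, but in both places no argument is given, the result evidently being regarded as routine. Your proposal is correct and provides precisely the standard argument one would expect. The verification of the ultrametric axioms via the nested equivalence relations $\sim_n$ is straightforward, and your construction of the limit proof $\pi^\ast$ as the direct limit of the coherent system of $n_k$-fragments is the natural one. The one substantive point, which you handle correctly, is checking that $\pi^\ast$ satisfies the infinite-branch condition: your observation that an infinite branch with only $m$ right $\Box$-premises would be confined to the (finite) $(m{+}1)$-fragment is exactly right, and relies on the fact that this fragment of $\pi^\ast$ coincides with that of some $\pi_k$ (hence is finite because $\pi_k\in\mathcal P$). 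Nothing is missing.
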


For $m\in \mathbb N$, let $\mathcal F_m$ denote the set of all non-expansive functions from $\mathcal P^m$ to $\mathcal P$. 
For $\mathsf{a,b}\in \mathcal F_m$, we write 
$\mathsf a\sim_{n,k}\mathsf b$ if $\mathsf a(\vec{\pi})\sim_n\mathsf b(\vec{\pi})$ for any $\vec{\pi}\in\mathcal P^m$ and, in addition, $\mathsf a(\vec{\pi}) \sim_{n+1}\mathsf b(\vec{\pi})$ whenever $\Sigma_{i=1}^m\lvert\pi_i\rvert< k$.\footnote{This definition is inspired by \cite[Subsection 2.1]{GiMi}.}
An ultrametric $l_m$ on $\mathcal F_m$ is defined by 
\[l_m(\mathsf a,\mathsf b)=\frac{1}{2}\inf\{\frac{1}{2^n}+\frac{1}{2^{n+k}}\mid \mathsf a\sim_{n,k}\mathsf b\}.\]

\begin{prop}\label{SphCom}
$(\mathcal F_m, l_m)$ is a spherically complete ultrametric space.
\end{prop}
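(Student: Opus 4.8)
The plan is to verify that $(\mathcal{F}_m, l_m)$ is an ultrametric space and then establish spherical completeness by reducing it to the spherical completeness of $(\mathcal{P}, d_{\mathcal{P}})$ from Proposition \ref{ComplP}. First I would check that $l_m$ is genuinely an ultrametric: symmetry is clear from the symmetry of $\sim_{n,k}$; that $l_m(\mathsf a, \mathsf b) = 0$ iff $\mathsf a = \mathsf b$ follows because $l_m(\mathsf a, \mathsf b) = 0$ forces $\mathsf a(\vec\pi) \sim_n \mathsf b(\vec\pi)$ for all $n$ and all $\vec\pi$, hence $\mathsf a(\vec\pi) = \mathsf b(\vec\pi)$ for all $\vec\pi$. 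The strong triangle inequality $l_m(\mathsf a,\mathsf c) \le \max\{l_m(\mathsf a,\mathsf b), l_m(\mathsf b,\mathsf c)\}$ is the routine part: if $\mathsf a \sim_{n,k} \mathsf b$ and $\mathsf b \sim_{n',k'} \mathsf c$ with the two defining quantities comparable, then $\sim_n$ (and $\sim_{n+1}$ under the height bound) is transitive, so $\mathsf a \sim_{\min(n,n'),\,?} \mathsf c$ with the right combined index; one checks the infimum behaves correctly with the $\frac12(\frac{1}{2^n} + \frac{1}{2^{n+k}})$ weighting.

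The core of the argument is spherical completeness. Let $(B_i)_{i \in I}$ be a chain (totally ordered by inclusion) of closed balls in $\mathcal{F}_m$; I must produce a point in $\bigcap_i B_i$. The idea is to evaluate pointwise: for each fixed $\vec\pi \in \mathcal{P}^m$, the images $\{\,\mathsf a(\vec\pi) \mid \mathsf a \in B_i\,\}$ and the structure of the balls $B_i$ induce a chain of closed balls in $(\mathcal{P}, d_{\mathcal{P}})$ (this uses that membership $\mathsf a \in B_i$ controls $\mathsf a(\vec\pi) \sim_n \mathsf b(\vec\pi)$ for an appropriate $n$ depending on the radius of $B_i$ and on $\sum_i |\pi_i|$). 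By Proposition \ref{ComplP} each such chain has nonempty intersection; I would use this to define a candidate function $\mathsf f$ by selecting, for each $\vec\pi$, a point in the corresponding intersection. The subtle points are (a) that $\mathsf f$ so defined is well-defined and independent of arbitrary choices — this should follow because once the chain of balls in $\mathcal{P}$ has radius tending to $0$ along $I$ the intersection is a single point, and otherwise the freedom is absorbed by the ball we are trying to land in; and (b) that $\mathsf f$ is non-expansive, i.e. $\mathsf f \in \mathcal{F}_m$. For non-expansiveness I would show $\mathsf f$ is a pointwise "limit" of the $\mathsf a_i$ in a suitable sense and that non-expansiveness is preserved: given $\vec\pi \sim_n \vec\rho$ coordinatewise (more precisely $d_{\mathcal P}$-close), pick $\mathsf a \in B_i$ for $B_i$ of small enough radius; then $\mathsf a(\vec\pi) \sim_n \mathsf a(\vec\rho)$ by non-expansiveness of $\mathsf a$, and $\mathsf f(\vec\pi) \sim_n \mathsf a(\vec\pi)$, $\mathsf f(\vec\rho) \sim_n \mathsf a(\vec\rho)$ by construction, so $\mathsf f(\vec\pi) \sim_n \mathsf f(\vec\rho)$; letting $n \to \infty$ gives $d_{\mathcal P}(\mathsf f(\vec\pi), \mathsf f(\vec\rho)) \le d_{\mathcal P}(\vec\pi, \vec\rho)$ (with the product metric on $\mathcal P^m$ being the max). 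Finally I would verify $\mathsf f \in \bigcap_i B_i$: fix $i$ and $\vec\pi$; for every $j$ with $B_j \subseteq B_i$, $\mathsf f(\vec\pi)$ lies in a ball around $\mathsf a_j(\vec\pi)$ of the radius dictated by $B_j$, and by choosing $j$ appropriately (depending on $\sum |\pi_i|$ and the radius of $B_i$) one gets $l_m(\mathsf f, \mathsf a_j) \le \mathrm{rad}(B_i)$, hence $\mathsf f \in B_i$.

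The main obstacle I anticipate is bookkeeping the two-index parameter $(n,k)$ in $\sim_{n,k}$ correctly when translating between balls in $\mathcal{F}_m$ and balls in $\mathcal{P}$: a ball of radius $\frac12(\frac{1}{2^n} + \frac{1}{2^{n+k}})$ in $\mathcal{F}_m$ does not uniformly control $\mathsf a(\vec\pi) \sim_{n+1} \mathsf b(\vec\pi)$ — it only does so when $\sum_{i} |\pi_i| < k$ — so the induced chain of balls in $\mathcal{P}$ has a radius that depends on $\vec\pi$, and one has to check that the chain property (total order by inclusion) is inherited for each fixed $\vec\pi$ and that the selected points assemble into a single non-expansive function rather than merely a function. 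Making precise the claim "for each $\vec\pi$, $\{\mathsf a(\vec\pi) : \mathsf a \in B_i\}_i$ is a chain of balls" and pinning down its radius as a function of the radius of $B_i$ and of $\sum_i|\pi_i|$ is where the real care is needed; the rest is a standard transfer of spherical completeness along evaluation maps. For the well-definedness in case (a), I would note that if two choices $\mathsf f, \mathsf f'$ arise, then for each $\vec\pi$ they lie in a common ball of radius $\le \mathrm{rad}(B_i)$ for every $i$, and since $\bigcap_i B_i$ — if it contains both — is itself a ball, we may simply fix one selection; strictly we only need \emph{some} element of the intersection, so arbitrary but fixed choices suffice.
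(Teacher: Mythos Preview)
Your overall strategy---define the candidate function pointwise via spherical completeness of $(\mathcal P, d_{\mathcal P})$ and then verify non-expansiveness by comparison with the centers---is the same as the paper's. The gap is in the case where the radii of the chain in $\mathcal F_m$ do \emph{not} tend to zero. Concretely, suppose the centers satisfy $\mathsf a_i \sim_{n,k_i} \mathsf a_{i+1}$ with $n$ eventually constant and $k_i \to \infty$, so the radii decrease strictly to $2^{-(n+1)}>0$. For a fixed $\vec\pi$, once $\sum_s|\pi_s| < k_i$ the induced balls in $\mathcal P$ all have radius $2^{-(n+1)}$; their intersection pins down only the $(n{+}1)$-fragment of $\mathsf f(\vec\pi)$ and leaves everything deeper free. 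Your claim that ``arbitrary but fixed choices suffice'' is therefore false: choosing the part beyond level $n{+}1$ independently for different $\vec\pi$ can easily break non-expansiveness at levels $n' > n{+}1$. And your non-expansiveness argument (``pick $\mathsf a \in B_i$ for $B_i$ of small enough radius'' and transfer through $\mathsf a$) silently assumes that for every target level $n'$ there is a ball in the chain controlling level $n'$; this is precisely what fails here, so the ``letting $n\to\infty$'' step has no content.

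The paper fixes this with a \emph{dependent} diagonal choice rather than an arbitrary one: it sets $\mathsf a(\vec\pi) := \mathsf a_{j(\vec\pi)}(\vec\pi)$ where $j(\vec\pi) = \min\{\,i : n_i = n \text{ and } \sum_s |\pi_s| < k_i\,\}$. The point is that $j(\vec\pi)$ depends only on the total local height $\sum_s|\pi_s|$, and whenever $\vec\pi$ and $\vec\rho$ agree coordinatewise at level $\geq 1$ their main fragments (hence their local heights) coincide, so $j(\vec\pi) = j(\vec\rho)$. Non-expansiveness of the single function $\mathsf a_{j(\vec\pi)}$ then gives $\mathsf a(\vec\pi) \sim_{n'} \mathsf a(\vec\rho)$ for every $n'$, not just $n' \le n{+}1$. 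This coherent selection is the missing idea in your proposal.
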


Notice that any operator $\mathsf U\colon\mathcal F_m\to \mathcal F_m$ is strictly contractive if and only if for any $\mathsf a,\mathsf b\in\mathcal F_m$, and any $n,k\in\mathbb N$ we have 
$$\mathsf a\sim_{n,k}\mathsf b\Rightarrow \mathsf U(\mathsf a)\sim_{n,k+1}\mathsf U(\mathsf b).$$

Now we state a generalization of the Banach's fixed-point theorem for ultrametric spaces that will be used in the next sections.

\begin{thm}[Prie\ss-Crampe \cite{PrCr}]\label{fixpoint}
Let $(M,d)$ be a non-empty spherically complete ultrametric space. Then every strictly contractive mapping $f\colon M\to M$ has a unique fixed-point. 
\end{thm}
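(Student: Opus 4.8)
The plan is to reproduce the classical Zorn-style proof of the Prie\ss-Crampe theorem, since the result as stated is quoted from \cite{PrCr}. Uniqueness is immediate: if $x$ and $y$ are fixed points with $x \neq y$, then $d(x,y) = d(f(x),f(y)) < d(x,y)$, which is absurd. So the work is in the existence half.

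For existence, first I would attach to every $x \in M$ the closed ball $B_x := \{y \mid d(x,y) \le d(x,f(x))\}$ and isolate two properties. (a) If $y \in B_x$ then $B_y \subseteq B_x$: from the strong triangle inequality and $d(f(x),f(y)) < d(x,y) \le d(x,f(x))$ one gets $d(y,f(y)) \le d(x,f(x))$, so $B_y$ has radius at most that of $B_x$, and then $z \in B_y$ gives $d(z,x) \le \max(d(z,y),d(y,x)) \le d(x,f(x))$. In particular $f(x) \in B_x$, so always $B_{f(x)} \subseteq B_x$. (b) If $f(x) \neq x$ then $B_{f(x)} \subsetneq B_x$: the radius $d(f(x),f(f(x)))$ of $B_{f(x)}$ is strictly smaller than the radius $d(x,f(x))$ of $B_x$ by strict contractivity, yet $x \in B_x \setminus B_{f(x)}$.

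Next I would invoke the Hausdorff maximal principle to pick a maximal chain $\mathcal C$ (ordered by inclusion) inside the family $\{B_x \mid x \in M\}$. Being a nest of balls, $\mathcal C$ has nonempty intersection by spherical completeness; fix $x \in \bigcap \mathcal C$. Since $x$ belongs to each member of $\mathcal C$, property (a) gives $B_x \subseteq B$ for every $B \in \mathcal C$, so $\mathcal C \cup \{B_x\}$ is still a chain and maximality forces $B_x \in \mathcal C$ --- hence $B_x$ is the least element of $\mathcal C$. Applying (a) once more to $f(x)$ (note $B_{f(x)} \subseteq B_x \subseteq B$ for all $B \in \mathcal C$) shows $B_{f(x)} \in \mathcal C$ as well, so $B_x \subseteq B_{f(x)}$ by minimality; together with $B_{f(x)} \subseteq B_x$ this yields $B_x = B_{f(x)}$, which by (b) is possible only when $f(x) = x$. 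Thus $x$ is the (unique) fixed point.

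The main obstacle is property (a): it is exactly the place where both the ultrametric inequality and the \emph{strictness} of the contraction are needed, and one must establish genuine set inclusion $B_y \subseteq B_x$, not merely a comparison of radii. Once (a) and (b) are in hand the chain argument is routine. As an alternative to maximal chains one can iterate $f$ transfinitely, choosing representatives at limit stages via spherical completeness; the resulting weakly decreasing transfinite sequence of balls $B_{x_\alpha}$ must stabilize because each is a subset of the fixed set $M$, and a stabilization step $B_{x_\alpha} = B_{x_{\alpha+1}}$ is, by (b), a fixed point.
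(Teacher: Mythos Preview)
The paper does not supply a proof of Theorem~\ref{fixpoint}; it merely quotes the result from \cite{PrCr}. Your argument is the standard one and is correct: properties~(a) and~(b) are verified exactly as you indicate, and the maximal-chain step goes through.

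One small caveat worth flagging. The paper's Appendix defines spherical completeness via \emph{countable descending sequences} of closed balls (and Proposition~\ref{sphcomp} is phrased the same way), whereas your maximal-chain argument tacitly uses the stronger condition that \emph{every} chain of balls has nonempty intersection --- which is the hypothesis in \cite{PrCr}. For the spaces actually used in the paper (metrics with values in a set of the form $\{0\}\cup\{2^{-n-1}+2^{-n-k-1}\}$), every chain of balls with pairwise distinct radii is countable, so the two notions coincide and your proof applies verbatim. If you wanted to match the paper's stated definition literally, you could note that in a real-valued ultrametric any strictly decreasing transfinite sequence of radii is countable (insert distinct rationals between consecutive terms), so a maximal chain of balls $B_x$ has a countable cofinal subsequence and the sequence version of spherical completeness already yields a point in the intersection. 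Either way the argument closes.
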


\section{Admissible Rules and Mappings}

In this section, for the system $\mathsf{Go}_{\infty} +\mathsf{cut}$, we state admissibility of auxiliary inference rules, which will be used in the proof of the  cut-elimination theorem. 

Recall that the set $\mathcal P$ of all $\infty$-proofs of the system $\mathsf{Go}_{\infty} +\mathsf{cut} $ can be considered as an ultrametric space with the metric $d_{\mathcal P}$.

By $\mathcal{P}_n$ we denote the set of all $\infty$-proofs that do not contain applications of the cut rule in their $n$-fragments. We also set $\mathcal{P}_0= \mathcal{P}$.

A mapping $\mathsf u:\mathcal P^m\to \mathcal P$ is called \emph{adequate} if for any $n\in\mathbb N$ we have $\mathsf u(\pi_1,\ldots,\pi_n)\in\mathcal P_n$, whenever $\pi_i\in\mathcal P_n$ for all $i\leqslant n$.

In $\mathsf{Go}_{\infty} +\mathsf{cut}$, we call a single-premise inference rule \emph{strongly admissible} if there is a non-expansive adequate mapping $\mathsf{u}\colon\mathcal{P} \to \mathcal{P}$ that maps any $\infty$-proof of the premise of the rule to an $\infty$-proof of the conclusion. The mapping $\mathsf{u}$ must also satisfy one additional condition: $\lvert \mathsf{u}(\pi)\rvert \leqslant \lvert \pi \rvert$ for any $\pi \in \mathcal{P}$.

In the following lemmata, non-expansive mappings are defined in a standard way by induction on the local heights of $\infty$-proofs for the premises. So we omit further details.

\begin{lem}\label{strongweakening}
For any finite multisets of formulas $\Pi$ and $\Sigma$, the inference rule
\begin{gather*}
\AXC{$\Gamma\Rightarrow\Delta$}
\LeftLabel{$\mathsf{wk}_{\Pi; \Sigma}$}
\UIC{$\Pi,\Gamma\Rightarrow\Delta,\Sigma$}
\DisplayProof
\end{gather*}
is strongly admissible in $\mathsf{Go}_{\infty} +\mathsf{cut}$. 
\end{lem}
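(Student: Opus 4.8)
The plan is to construct the required mapping $\mathsf{wk}_{\Pi;\Sigma}\colon\mathcal P\to\mathcal P$ by recursion on the local height of the input $\infty$-proof, following the case analysis of the five possible shapes of an $\infty$-proof listed in the excerpt. Concretely, given $\pi$ an $\infty$-proof of $\Gamma\Rightarrow\Delta$, I would define $\mathsf{wk}_{\Pi;\Sigma}(\pi)$ to be an $\infty$-proof of $\Pi,\Gamma\Rightarrow\Delta,\Sigma$ as follows. If $\pi$ is an initial sequent $\Gamma',p\Rightarrow p,\Delta'$ or $\Gamma',\bot\Rightarrow\Delta'$, then $\Pi,\Gamma',p\Rightarrow p,\Delta',\Sigma$ (resp. with $\bot$) is again an initial sequent, so we return it. If $\pi=\mathsf{\to_{R(A\to B)}}(\pi_0)$, return $\mathsf{\to_{R(A\to B)}}(\mathsf{wk}_{\Pi;\Sigma'}(\pi_0))$ where $A\to B,\Sigma'=\Delta$; similarly for $\mathsf{\to_L}$ and for $\mathsf{cut}$, pushing the weakening into both premises. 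The only case needing care is $\pi=\mathsf{\Box_{\Phi;\Lambda}}(\pi_0,\pi_1,\ldots,\pi_n)$ with conclusion $\Phi,\Box\Xi\Rightarrow\Box A_1,\ldots,\Box A_n,\Lambda$: here the active formulas $\Box\Xi$ on the left and $\Box A_1,\ldots,\Box A_n$ on the right stay fixed, and we simply enlarge the context, returning $\mathsf{\Box_{\Pi,\Phi;\Lambda,\Sigma}}(\pi_0,\pi_1,\ldots,\pi_n)$ — the premises are untouched, since weakening the conclusion of a $\Box$-rule does not affect its premises. This last observation is what makes the argument work; it is the mild analogue of the "main obstacle".

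Next I would verify the three required properties. \emph{Non-expansiveness}: one shows by the same recursion that if $\pi\sim_n\tau$ then $\mathsf{wk}_{\Pi;\Sigma}(\pi)\sim_n\mathsf{wk}_{\Pi;\Sigma}(\tau)$, hence $d_{\mathcal P}(\mathsf{wk}_{\Pi;\Sigma}(\pi),\mathsf{wk}_{\Pi;\Sigma}(\tau))\leqslant d_{\mathcal P}(\pi,\tau)$; this is immediate because the construction is defined uniformly and locally — the $n$-fragment of the output depends only on the $n$-fragment of the input. \emph{Height bound}: $\lvert\mathsf{wk}_{\Pi;\Sigma}(\pi)\rvert\leqslant\lvert\pi\rvert$ holds since no case increases the length of the main fragment — in the $\mathsf{\to_R}$, $\mathsf{\to_L}$, $\mathsf{cut}$ cases the last rule is preserved and the subproofs' heights do not increase by the recursive call, and in the $\Box$ case the main fragment is literally the one-rule tree whose length is unchanged. \emph{Adequacy}: if $\pi\in\mathcal P_n$, i.e. its $n$-fragment contains no cut, then since $\mathsf{wk}_{\Pi;\Sigma}$ commutes with the rule structure and never introduces a cut, the $n$-fragment of $\mathsf{wk}_{\Pi;\Sigma}(\pi)$ is cut-free as well, so $\mathsf{wk}_{\Pi;\Sigma}(\pi)\in\mathcal P_n$.

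Finally I would check well-definedness of the recursion on non-well-founded trees: the recursion terminates along each branch because the local height strictly decreases in the non-$\Box$ cases, and at a $\Box$-rule the recursion simply stops (the premises $\pi_0,\ldots,\pi_n$ are returned verbatim), so $\mathsf{wk}_{\Pi;\Sigma}$ is defined by recursion on the main fragment only and extends canonically across the whole $\infty$-proof; the admissibility side-condition that every infinite branch passes through a right premise of a $\Box$-rule infinitely often is inherited because the output tree has, above each node, exactly the same $\Box$-rule applications as the input (only contexts differ). This establishes that $\mathsf{wk}_{\Pi;\Sigma}$ is a non-expansive adequate mapping witnessing strong admissibility, completing the proof.
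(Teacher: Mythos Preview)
Your proposal is correct and matches the paper's approach exactly: the paper merely states that these mappings ``are defined in a standard way by induction on the local heights of $\infty$-proofs'' and omits the details you have spelled out. One minor slip: in the $\to_{\mathsf R}$ case you should write $\mathsf{wk}_{\Pi;\Sigma}(\pi_0)$ rather than $\mathsf{wk}_{\Pi;\Sigma'}(\pi_0)$, since $\Sigma$ is the fixed weakening context while $\Sigma'$ is just your name for the residual part of $\Delta$; and note that in the $\Box$ case the main fragment still contains the leftmost premise $\pi_0$ (only the right premises are cut), though your height conclusion remains correct because $\pi_0$ is returned unchanged.
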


\begin{lem}\label{inversion}
For any formulas $A$ and $B$, the rules
\begin{gather*}
\AXC{$\Gamma , A \rightarrow B \Rightarrow  \Delta$}
\LeftLabel{$\mathsf{li}_{A \to B}$}
\UIC{$\Gamma ,B \Rightarrow  \Delta$}
\DisplayProof\qquad
\AXC{$\Gamma , A \rightarrow B \Rightarrow  \Delta$}
\LeftLabel{$\mathsf{ri}_{A \to B}$}
\UIC{$\Gamma  \Rightarrow  A, \Delta$}
\DisplayProof\\\\
\AXC{$\Gamma  \Rightarrow   A \rightarrow B, \Delta$}
\LeftLabel{$\mathsf{i}_{A \to B}$}
\UIC{$\Gamma ,A \Rightarrow B, \Delta$}
\DisplayProof\qquad
\AXC{$\Gamma  \Rightarrow   \bot, \Delta$}
\LeftLabel{$\mathsf{i}_{\bot}$}
\UIC{$\Gamma  \Rightarrow \Delta$}
\DisplayProof
\end{gather*}
are strongly admissible in $\mathsf{Go}_{\infty} +\mathsf{cut}$.
\end{lem}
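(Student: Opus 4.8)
The plan is to define, for each of the four rules, a map $\mathsf u\colon\mathcal P\to\mathcal P$ by recursion on the local height $\lvert\pi\rvert$ of the $\infty$-proof $\pi$ of the premise, with a case distinction on the last inference rule of $\pi$, and to check simultaneously that $\mathsf u$ is non-expansive, adequate, and satisfies $\lvert\mathsf u(\pi)\rvert\leqslant\lvert\pi\rvert$. I will spell out the construction for the rule $\mathsf i_{A\to B}$; the other three are handled in exactly the same fashion.

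Let $\pi$ be an $\infty$-proof of $\Gamma\Rightarrow A\to B,\Delta$. If $\pi$ is an initial sequent, then, since $A\to B$ is neither an atom nor $\bot$, the sequent $\Gamma,A\Rightarrow B,\Delta$ is again an initial sequent, and we let $\mathsf u(\pi)$ be it. If the last rule of $\pi$ introduces the displayed occurrence of $A\to B$, i.e.\ $\pi=\mathsf{\to_{R(A\to B)}}(\pi_0)$ with $\pi_0$ proving $\Gamma,A\Rightarrow B,\Delta$, we set $\mathsf u(\pi):=\pi_0$. If the last rule of $\pi$ is propositional or a cut with the displayed $A\to B$ occurring only as a side formula, we apply $\mathsf u$ to the immediate subproofs and reapply that rule; for instance $\mathsf u\big(\mathsf{\to_{L(C\to D)}}(\pi_0,\pi_1)\big):=\mathsf{\to_{L(C\to D)}}\big(\mathsf u(\pi_0),\mathsf u(\pi_1)\big)$ and $\mathsf u\big(\mathsf{cut}_C(\pi_0,\pi_1)\big):=\mathsf{cut}_C\big(\mathsf u(\pi_0),\mathsf u(\pi_1)\big)$, which is legitimate since here the subproofs have strictly smaller local height. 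Finally, if $\pi=\mathsf{\Box_{\Phi;\Sigma}}(\pi_0,\ldots,\pi_k)$, then, $A\to B$ being an implication, it must lie in $\Sigma$, say $\Sigma=A\to B,\Sigma_0$; in this case we do not recur but merely move $A$ and $B$ into the side contexts of the $\Box$-rule, putting $\mathsf u(\pi):=\mathsf{\Box_{\Phi,A;\,B,\Sigma_0}}(\pi_0,\ldots,\pi_k)$, which proves $\Gamma,A\Rightarrow B,\Delta$. Each of these operations preserves the requirement on infinite branches, since it does not disturb the pattern of $\Box$-rules along a branch. For $\mathsf{li}_{A\to B}$ and $\mathsf{ri}_{A\to B}$ one proceeds analogously, returning $\pi_0$, respectively $\pi_1$, when the last rule is $\mathsf{\to_{L(A\to B)}}$ on the tracked occurrence, and for the $\Box$-case deleting $A\to B$ from the antecedent side context (and, for $\mathsf{ri}$, inserting $A$ into the succedent side context); for $\mathsf i_\bot$ the formula $\bot$ is never principal in a propositional rule, and the $\Box$-case simply deletes $\bot$ from the succedent side context.

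It then remains to verify the three properties. Height non-increase $\lvert\mathsf u(\pi)\rvert\leqslant\lvert\pi\rvert$ is read off the construction: returning an immediate subproof strictly lowers the local height, rebuilding a $\Box$-rule leaves it unchanged, and placing a propositional rule or a cut on top of height-non-increasing maps keeps it the same. Adequacy holds because the $n$-fragment of $\mathsf u(\pi)$ arises from the $n$-fragment of $\pi$ by exactly this local surgery, so it contains no application of cut that was not already present in $\pi$; in the case where the last rule of $\pi$ is itself a cut the claim is vacuous for $n\geqslant 1$ (then $\pi\notin\mathcal P_n$) and trivial for $n=0$. Non-expansiveness follows by induction on $\lvert\pi\rvert$: for $n\geqslant 1$, $\pi\sim_n\tau$ forces $\pi$ and $\tau$ to have matching last rules, with the corresponding immediate subproofs agreeing up to the fragment dictated by $\sim_n$, and in each clause of the definition this propagates to $\mathsf u(\pi)\sim_n\mathsf u(\tau)$, the induction hypothesis being used precisely in the "recurse and reapply" cases. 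I expect the only mildly delicate point to be the multiset bookkeeping — in each case deciding whether the tracked occurrence of $A\to B$ (or $\bot$) is principal or merely a side formula of the last rule — but this is routine, which is why the detailed verification is omitted.
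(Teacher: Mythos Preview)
Your proposal is correct and follows exactly the approach the paper has in mind: the paper's own proof merely states that ``non-expansive mappings are defined in a standard way by induction on the local heights of $\infty$-proofs for the premises'' and omits the details you have supplied. One small slip: in the $\Box$-case for $\mathsf{li}_{A\to B}$ you must not only delete $A\to B$ from the antecedent side context $\Phi$ but also insert $B$ there, so that the conclusion becomes $\Gamma,B\Rightarrow\Delta$; with that correction everything goes through.
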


\begin{lem}\label{weakcontraction}
For any atomic proposition $p$, the rules
\begin{gather*}
\AXC{$\Gamma , p,p \Rightarrow  \Delta$}
\LeftLabel{$\mathsf{acl}_{p}$}
\UIC{$\Gamma ,p \Rightarrow  \Delta$}
\DisplayProof\qquad
\AXC{$\Gamma \Rightarrow p,p, \Delta$}
\LeftLabel{$\mathsf{acr}_{p}$}
\UIC{$\Gamma \Rightarrow p, \Delta$}
\DisplayProof
\end{gather*}
are strongly admissible in $\mathsf{Go}_{\infty} +\mathsf{cut}$.
\end{lem}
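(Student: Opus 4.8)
The statement to prove is Lemma~\ref{weakcontraction}: the atomic contraction rules $\mathsf{acl}_p$ and $\mathsf{acr}_p$ are strongly admissible in $\mathsf{Go}_\infty + \mathsf{cut}$. By the definition of strong admissibility, I must produce a non-expansive adequate mapping $\mathsf u \colon \mathcal P \to \mathcal P$ taking any $\infty$-proof of $\Gamma, p, p \Rightarrow \Delta$ (resp.\ $\Gamma \Rightarrow p, p, \Delta$) to one of $\Gamma, p \Rightarrow \Delta$ (resp.\ $\Gamma \Rightarrow p, \Delta$), with $\lvert \mathsf u(\pi) \rvert \leqslant \lvert \pi \rvert$. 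As the paragraph before Lemma~\ref{strongweakening} indicates, the mapping is defined by recursion on the local height of the premise proof, i.e.\ by specifying its action on the main fragment and then letting it act on the $\infty$-proofs sitting above the right premises of $\Box$-rules. The hard part will be seeing that contracting two atomic copies never disrupts an application of the $\Box$-rule, and that the recursive definition really is a legitimate map on the full (possibly infinite) tree.

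The plan is to treat $\mathsf{acl}_p$ in detail (the case of $\mathsf{acr}_p$ being symmetric). Given $\pi \in \mathcal P$ with root $\Gamma, p, p \Rightarrow \Delta$, I case on the last rule of $\pi$ using the five-way classification stated in the excerpt. If $\pi$ is an initial sequent: either it is an instance of $\Gamma', \bot \Rightarrow \Delta$, in which case deleting one copy of $p$ from the antecedent still leaves $\bot$ present and the result is initial; or it is $\Gamma', q \Rightarrow q, \Delta'$ for some atom $q$, and since at most the two contracted occurrences of $p$ are being merged, a copy of $q$ still occurs on each side after contraction, so again the result is an initial sequent of height $0$. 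If $\pi = \mathsf{\to_{R(A \to B)}}(\pi_0)$ or $\pi = \mathsf{\to_{L(A \to B)}}(\pi_0, \pi_1)$ or $\pi = \mathsf{cut}_A(\pi_0, \pi_1)$, the two occurrences of $p$ are side formulas in every premise (the principal formula of these rules is an implication, not an atom), so I set $\mathsf u(\pi)$ to be the same rule applied to $\mathsf u(\pi_0)$ (and $\mathsf u(\pi_1)$); the local-height bound and non-expansiveness follow from the inductive hypothesis, and adequacy is preserved because no cut is introduced.

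The remaining case is $\pi = \mathsf{\Box_{\Phi; \Sigma}}(\pi_0, \pi_1, \ldots, \pi_n)$ with conclusion $\Phi, \Box\Pi \Rightarrow \Box A_1, \ldots, \Box A_n, \Sigma$. Here the two copies of $p$ being contracted lie in $\Phi$ (they cannot be among $\Box\Pi$, since $p$ is atomic while $\Box\Pi$ consists of boxed formulas — though if some $A_i = p$ this requires a moment's care, but still $p \ne \Box A_i$ so the occurrences are in $\Phi$ or $\Sigma$); so contraction only affects the passive context $\Phi$, which is discarded when passing to the premises. Thus I keep the same $\Box$-rule, merely rewriting its conclusion with one fewer $p$, and set $\mathsf u(\pi) = \mathsf{\Box_{\Phi'; \Sigma}}(\pi_0, \pi_1, \ldots, \pi_n)$ where $\Phi'$ is $\Phi$ with one $p$ removed; the subproofs $\pi_0, \ldots, \pi_n$ are unchanged, local height is unchanged, and no new cut appears. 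Because every infinite branch of $\pi$ passes infinitely often through a right premise of a $\Box$-rule and $\mathsf u$ leaves those subproofs and the branching structure intact, $\mathsf u(\pi)$ is again a genuine $\infty$-proof. Finally I check non-expansiveness in the metric $d_{\mathcal P}$: by construction $\pi \sim_k \tau$ implies $\mathsf u(\pi) \sim_k \mathsf u(\tau)$, since $\mathsf u$ commutes with the rule structure and does not alter heights, so $d_{\mathcal P}(\mathsf u(\pi), \mathsf u(\tau)) \leqslant d_{\mathcal P}(\pi, \tau)$; and adequacy holds since $\mathsf u$ introduces no cut and preserves $n$-fragments cut-wise. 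The case $\mathsf{acr}_p$ is entirely dual, contracting on the right. I expect the only subtlety worth spelling out is the observation that in the $\Box$-case the contracted atoms genuinely sit in the passive context and never in the "active" part $\boxtimes\Pi$ or $\boxtimes(A_1, \ldots, A_n)$ of the premises.
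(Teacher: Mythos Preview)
Your proposal is correct and follows exactly the approach the paper indicates: the paper's proof consists of the single remark that ``non-expansive mappings are defined in a standard way by induction on the local heights of $\infty$-proofs for the premises,'' and your case analysis spells out precisely that induction. The key observation you isolate---that in the $\Box$-case the contracted atoms necessarily lie in the passive context $\Phi$ (resp.\ $\Sigma$) because $p$ is atomic and hence not a boxed formula, so the premises are untouched---is the only point requiring any thought, and you handle it correctly.
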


Let us also define the mapping $\mathsf{clip}\colon \mathcal P\to\mathcal P$. Consider an $\infty$-proof $\pi$. If the last rule application in $\pi$ is not of the rule ($\Box$) then we put $\mathsf{clip}(\pi)=\pi$. If the $\infty$-proof $\pi$ has the form
\[
\AXC{$\pi_0$}
\noLine
\UIC{$\boxtimes \Pi \Rightarrow \boxtimes(A_1,\ldots,A_n)$}
\AXC{$\pi_1$}
\noLine
\UIC{$\boxtimes \Pi\Rightarrow A_1$}
\AXC{$\ldots$}
\AXC{$\pi_n$}
\noLine
\UIC{$\boxtimes \Pi\Rightarrow A_n$}
\QuaternaryInfC{$\Gamma, \Box \Pi\Rightarrow \Box A_1,\ldots,\Box A_n, \Delta$}
\DisplayProof
\]
we define $\mathsf{clip}(\pi)=\pi$ to be
\[
\AXC{$\pi_0$}
\noLine
\UIC{$\boxtimes \Pi \Rightarrow \boxtimes(A_1,\ldots,A_n)$}
\AXC{$\pi_1$}
\noLine
\UIC{$\boxtimes \Pi\Rightarrow A_1$}
\AXC{$\ldots$}
\AXC{$\pi_n$}
\noLine
\UIC{$\boxtimes \Pi\Rightarrow A_n$}
\RightLabel{.}
\QuaternaryInfC{$\Box \Pi\Rightarrow \Box A_1,\ldots,\Box A_n$}
\DisplayProof
\]
Clearly this mapping is non-expansive, adequate, and $\lvert \mathsf{clip}(\pi)\rvert \leqslant \lvert \pi \rvert$ for any $\pi \in \mathcal{P}$.

\section{Cut elimination}
\label{SecCut}

In this section, we construct a continuous function from $\mathcal{P}$ to $\mathcal{P}$, which maps any $\infty$-proof of the system $\mathsf{Go}_{\infty} +\mathsf{cut}$ to a cut-free $\infty$-proof of the same sequent. 

Let us call a pair of $\infty$-proofs $(\pi,\tau)$ a \emph{cut pair} if $\pi$ is an $\infty$-proof of the sequent $\Gamma\Rightarrow\Delta,A$ and $\tau$ is an $\infty$-proof of the sequent $A, \Gamma\Rightarrow \Delta$ for some $\Gamma, \Delta$, and $A$. For a cut pair $(\pi,\tau)$, we call the sequent $\Gamma\Rightarrow \Delta$ its \emph{cut result} and the formula $A$ its cut formula.

For a modal formula $A$, a non-expansive mapping $\mathsf{u}$ from $\mathcal P \times \mathcal P$ to $\mathcal P$ is called \emph{$A$-removing} if it maps every cut pair $(\pi,\tau)$ with the cut formula $A$ to an $\infty$-proof of its cut result.
By $\mathcal R_A$, let us denote the set of all $A$-removing mappings.

\begin{lem}\label{Comp R_A}
For each formula $A$, the pair $(\mathcal R_A,l_2)$ is a non-empty spherically complete ultrametric space.

\end{lem}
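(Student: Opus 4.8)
The plan is to establish three things: that $\mathcal R_A$ is non-empty, that it is a closed subspace of $(\mathcal F_2, l_2)$, and that closed subspaces of spherically complete spaces are spherically complete. Non-emptiness I would get directly from Theorem~\ref{inftoseq} together with Theorem~\ref{seqtoinfcut}: given a cut pair $(\pi,\tau)$ with cut formula $A$ and cut result $\Gamma\Rightarrow\Delta$, the sequent $\Gamma\Rightarrow\Delta$ is provable in $\mathsf{Go_\infty}+\mathsf{cut}$ (apply the cut rule once to $\pi$ and $\tau$), hence by Theorem~\ref{inftoseq} it is provable in $\mathsf{Go_{Seq}}$, and one can fix, once and for all, a choice of such a proof. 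But this only gives a function defined on cut pairs; one must extend it to all of $\mathcal P\times\mathcal P$ and, more importantly, arrange that it is non-expansive. The cleanest route is probably the one used implicitly throughout Section~\ref{SecCut}: define a mapping by recursion on $\lvert\pi\rvert+\lvert\tau\rvert$ (or on the first premise) that performs the usual cut-reduction steps and closes up at $\Box$-rules using Lemma~\ref{Go-schema}-style arguments; but for the purposes of this lemma we only need \emph{existence} of one $A$-removing mapping, so in fact even the naive map $(\pi,\tau)\mapsto\mathsf{cut}_A(\pi,\tau)$ restricted to cut pairs and extended arbitrarily (say by a fixed constant proof off the cut-pair locus) is $A$-removing and non-expansive — the point is that $\mathsf{cut}_A$ itself is a non-expansive operation $\mathcal P\times\mathcal P\to\mathcal P$ in the obvious sense, so $\mathcal R_A\neq\varnothing$.

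Next I would show that $\mathcal R_A$ is a closed subset of $(\mathcal F_2,l_2)$. Suppose $\mathsf u\in\mathcal F_2$ lies in the closure of $\mathcal R_A$; I want to show $\mathsf u$ is $A$-removing, i.e.\ for every cut pair $(\pi,\tau)$ with cut formula $A$ and cut result $\Gamma\Rightarrow\Delta$, the $\infty$-proof $\mathsf u(\pi,\tau)$ has root $\Gamma\Rightarrow\Delta$. Fix such a cut pair. For each $\varepsilon>0$ there is $\mathsf v\in\mathcal R_A$ with $l_2(\mathsf u,\mathsf v)<\varepsilon$; unpacking the definition of $l_2$, this forces $\mathsf u(\pi,\tau)\sim_n\mathsf v(\pi,\tau)$ for all sufficiently large $n$, and in particular $\mathsf u(\pi,\tau)\sim_1\mathsf v(\pi,\tau)$, which means their main fragments — hence their root sequents — coincide. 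Since $\mathsf v(\pi,\tau)$ has root $\Gamma\Rightarrow\Delta$, so does $\mathsf u(\pi,\tau)$. Thus $\mathsf u\in\mathcal R_A$, so $\mathcal R_A$ is closed. (One subtlety to state carefully: the metric $l_2$ could a priori fail to separate two functions that agree on all cut pairs but differ elsewhere; but being closed only needs the implication ``limit of $A$-removing maps is $A$-removing,'' which is exactly what the $\sim_1$ argument gives, so this causes no trouble.)

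Finally, spherical completeness of $\mathcal R_A$ follows from spherical completeness of $(\mathcal F_2,l_2)$, which is Proposition~\ref{SphCom}, via the general fact that a non-empty closed subspace of a spherically complete ultrametric space is spherically complete: given a chain of balls in $\mathcal R_A$, view them as (intersections with $\mathcal R_A$ of) balls in $\mathcal F_2$; by spherical completeness of $\mathcal F_2$ the corresponding chain of $\mathcal F_2$-balls has a point $\mathsf u$ in its intersection, and — because in an ultrametric space every point of a ball is a center, so each ball in the $\mathcal R_A$-chain is $B(\mathsf v_i,r_i)$ for some $\mathsf v_i\in\mathcal R_A$ and the ambient ball $B^{\mathcal F_2}(\mathsf v_i,r_i)$ meets $\mathcal R_A$ — the limit point $\mathsf u$ lies in the closure of $\mathcal R_A$, hence in $\mathcal R_A$ by the previous paragraph, and it lies in every ball of the chain. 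I expect the main obstacle to be purely a matter of bookkeeping: making precise that the ``obvious'' ambient balls containing the chain do intersect $\mathcal R_A$ and that their intersection point, guaranteed by Proposition~\ref{SphCom}, can be pushed back into $\mathcal R_A$ using closedness; this is a standard lemma about ultrametric spaces, and if the Appendix already records it one can simply cite it, reducing the present proof to: non-empty (by the cut rule / Theorem~\ref{inftoseq}) $+$ closed in $\mathcal F_2$ (by the $\sim_1$ argument) $+$ closed subspaces of spherically complete spaces are spherically complete.
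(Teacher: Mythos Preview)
Your non-emptiness argument via $(\pi,\tau)\mapsto\mathsf{cut}_A(\pi,\tau)$, extended trivially off cut pairs, is exactly what the paper does; the paper returns the first argument on non-cut-pairs, but your constant extension works equally well. (Your opening route through Theorem~\ref{inftoseq} does not apply, since that theorem is about $\mathsf{Go}_\infty$ without cut, not $\mathsf{Go}_\infty+\mathsf{cut}$; but you drop it anyway.)

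The genuine gap is in the spherical-completeness step. The lemma you rely on --- that a non-empty closed subspace of a spherically complete ultrametric space is spherically complete --- is \emph{false} in general. For instance, $\mathbb C_p$ is a closed subspace of its spherical completion but is not spherically complete; for a finite example in the spirit of $l_2$, take $M=\{b\}\cup\{a_n:n\ge1\}$ with $d(a_m,a_n)=d(a_n,b)=\tfrac12+2^{-n-1}$ for $m>n$: then $M$ is spherically complete, $N=\{a_n\}$ is closed, yet the chain $B_N(a_n,\tfrac12+2^{-n-1})=\{a_m:m\ge n\}$ has empty intersection. Concretely, your inference that the ambient intersection point $\mathsf u$ ``lies in the closure of $\mathcal R_A$'' uses only that each ball $B^{\mathcal F_2}(\mathsf v_i,r_i)$ meets $\mathcal R_A$; this yields merely $l_2(\mathsf u,\mathsf v_i)\le r_i$, which says nothing about $\mathsf u$ being near $\mathcal R_A$ unless $r_i\to 0$. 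The paper does not argue abstractly: it reruns the construction from the proof of Proposition~\ref{SphCom} and observes that the limit $\mathsf a$ built there satisfies, for every input $\vec\pi$, that $\mathsf a(\vec\pi)$ either equals some $\mathsf a_j(\vec\pi)$ or is a $d_{\mathcal P}$-limit of them; in either case $\mathsf a(\vec\pi)$ shares its root with some $\mathsf a_j(\vec\pi)$, so if every $\mathsf a_j$ is $A$-removing then so is $\mathsf a$. Your $\sim_1$ closedness computation is precisely the observation needed, but it must be applied inside that explicit construction of the limit, not to an arbitrary ambient intersection point after the fact.
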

\begin{proof}
The proof of spherical completeness of the space $(\mathcal R_A,l_2)$ is analoguos to the proof of the spherical completeness of $(\mathcal F_m,L_m)$.

We only need to check that the set $\mathcal R_A$ is non-empty. Consider the mapping $\mathsf u_{cut}:\mathcal P^2\to\mathcal P$ that is defined as follows. For a cut pair $(\pi,\tau)$ with the cut formula $A$, it joins the $\infty$-proofs $\pi$ and $\tau$ with an appropriate instance of the rule $(\mathsf{cut})$. For all other pairs, the mapping $\mathsf u_{cut}$ returns the first argument.

Clearly, $\mathsf u_{cut}$ is non-expansive and therefore lies in $\mathcal R_A$.
\end{proof}

In what follows, we use nonexpansive adequate mappings $ \mathsf{wk}_{\Pi; \Sigma}$, $\mathsf{li}_{A\to B}$, $\mathsf{ri}_{A\to B}$, $\mathsf{i}_{A\to B}$, $\mathsf{i}_\bot$, $\mathsf{acl}_{p}$, $\mathsf{acr}_{p}$ from Lemma \ref{strongweakening}, Lemma \ref{inversion}, and Lemma \ref{weakcontraction}.

\begin{lem}\label{repadeq}
For any atomic proposition $p$, there exists an adequate $p$-removing mapping $\mathsf {re}_p$.
\end{lem}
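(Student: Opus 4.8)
The plan is to construct $\mathsf{re}_p$ as a fixed point of a strictly contractive operator on the spherically complete space $(\mathcal R_p, l_2)$, using the Prie\ss-Crampe theorem (Theorem~\ref{fixpoint}) together with Lemma~\ref{Comp R_A}. Concretely, I would define an operator $\mathsf{U}\colon\mathcal R_p\to\mathcal R_p$ that, given a candidate $p$-removing mapping $\mathsf{r}$, returns a new mapping $\mathsf{U}(\mathsf r)$ acting on a cut pair $(\pi,\tau)$ with cut formula $p$ by case analysis on the last rules applied in $\pi$ and in $\tau$; in the recursive calls it applies $\mathsf r$ to the (strictly shorter, in local height) subproofs. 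The contractivity criterion stated right after Proposition~\ref{SphCom} — namely $\mathsf a\sim_{n,k}\mathsf b\Rightarrow \mathsf U(\mathsf a)\sim_{n,k+1}\mathsf U(\mathsf b)$ — is exactly what one gets because $\mathsf{U}(\mathsf r)$ on inputs of total local height $<k+1$ only feeds $\mathsf r$ inputs of total local height $<k$, so one more level of the $n$-fragment is pinned down. Once $\mathsf{re}_p$ is obtained as the fixed point, it automatically satisfies the recursive equations used to define $\mathsf U$, which is what makes it genuinely $p$-removing and lets us verify adequacy.

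The case analysis is the substantive content. If $\pi$ (or symmetrically $\tau$) ends in a logical rule ($\mathsf{\to_L}$, $\mathsf{\to_R}$) or in a $\mathsf{cut}$ or a $\Box$-rule whose active formula is \emph{not} the cut formula $p$ and whose side formulas still carry the cut context, we permute: apply $\mathsf r$ to the immediate subproof(s) paired with the appropriately weakened other proof (using the strongly admissible $\mathsf{wk}_{\Pi;\Sigma}$, $\mathsf{li}_{A\to B}$, $\mathsf{ri}_{A\to B}$, $\mathsf{i}_{A\to B}$, $\mathsf{i}_\bot$ from Lemmas~\ref{strongweakening}--\ref{inversion} to restore the contexts), then reapply the same last rule. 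If $\pi$ ends in an initial sequent, then either it is an axiom not involving the displayed $p$ and we simply weaken it to the cut result, or it is $\Gamma,p\Rightarrow p,\Delta'$ with the displayed $p$ as cut formula, in which case the cut result $\Gamma,p\Rightarrow\Delta'$ — wait, one must be careful: the cut result is $\Gamma\Rightarrow\Delta$, and here $\pi$ proves $\Gamma\Rightarrow\Delta,p$ being an axiom means $p\in\Gamma$ or $\Gamma\cap\Delta\neq\varnothing$; in the first case $\tau$ proves $p,\Gamma\Rightarrow\Delta$ and we already have $\Gamma\Rightarrow\Delta$ by contracting the $p$ already in $\Gamma$ against $\tau$ via... this is where $\mathsf{acl}_p$/$\mathsf{acr}_p$ from Lemma~\ref{weakcontraction} come in, collapsing the duplicated atom. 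The genuinely interesting case is when \emph{both} $\pi$ and $\tau$ end in a $\Box$-rule and the principal formula on each side is the cut formula; but since $p$ is atomic it is never the principal formula of a $\Box$-rule, so a $\Box$-rule application always keeps $p$ merely in the weakening context $\Gamma,\Delta$ — hence these reduce to the permutation case using $\mathsf{clip}$ and re-decorating the conclusion.

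The main obstacle I anticipate is the bookkeeping needed to keep the mapping \emph{adequate} and \emph{non-expansive} simultaneously while every reduction step only ever calls $\mathsf r$ on arguments of strictly smaller total local height. Non-expansiveness of $\mathsf{U}(\mathsf r)$ follows from non-expansiveness of $\mathsf r$ and of all the auxiliary mappings, provided the case split itself is "continuous", i.e. determined by a bounded-depth initial segment of the inputs — which it is, since it only looks at the last rule(s). Adequacy requires checking that if the two input proofs have cut-free $n$-fragments then so does the output: this is immediate in the permutation cases because no new cuts are introduced (the reapplied last rule is not a cut, and the recursive $\mathsf r$-calls land in $\mathcal P_{n}$ or $\mathcal P_{n-1}$ by the inductive/adequacy hypothesis and pass through a right $\Box$-premise when crossing a $\Box$-level), and in the axiom cases because weakening and atomic contraction are adequate. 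Having discharged these, the fixed point $\mathsf{re}_p := \operatorname{fix}(\mathsf U)$ is the desired adequate $p$-removing mapping.
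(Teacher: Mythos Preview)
Your approach is correct but considerably heavier than what the paper does. The paper proves this lemma by a \emph{direct induction on $|\pi|$ alone}, with no fixed-point machinery: if $(\pi,\tau)$ is not a cut pair with cut formula $p$, return $\pi$; if $|\pi|=0$, either the cut result is already initial or one applies $\mathsf{acl}_p(\tau)$; otherwise one does case analysis on the last rule of $\pi$ only, recursing on $\pi$'s immediate subproofs paired with $\tau$ massaged by $\mathsf{i}_{B\to C}$, $\mathsf{li}_{B\to C}$, $\mathsf{ri}_{B\to C}$, or $\mathsf{wk}$. In the $\Box$ case, since $p$ is atomic it sits in the side context $\Sigma$, so one simply drops it from the conclusion with no recursion at all. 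This is well-founded because $|\pi_0|<|\pi|$ in every recursive call.

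You correctly identify the key simplification (that $p$ is never principal in a $\Box$-rule), but then you still invoke Prie\ss--Crampe, strict contractivity, and the $(n,k)$-adequacy bookkeeping. That machinery is what the paper reserves for Lemma~\ref{reboxadeq}, where the ``both principal'' $\Box$-case genuinely forces recursion on $\tau$'s subproofs as well and the well-foundedness of a plain induction on $|\pi|$ breaks down. For atomic $p$ your operator $\mathsf U$ would in fact never call $\mathsf r$ except on arguments with strictly smaller $|\pi|$-component, so the fixed point you obtain coincides with the function the paper defines by ordinary induction. What your route buys is uniformity with the $\Box B$ case; what the paper's route buys is a much shorter proof with no need to verify contractivity or to set up $(n,k)$-adequacy. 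One small slip in your adequacy sketch: when $\pi$ ends in $\mathsf{cut}$, the reapplied last rule \emph{is} a cut --- adequacy still goes through because if $\pi\in\mathcal P_n$ with $n\geq 1$ then its last rule cannot be $\mathsf{cut}$, so that branch of the case split is never taken on adequate inputs.
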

\begin{proof}

Assume we have two $\infty$-proofs $\pi$ and $\tau$. If the pair $(\pi,\tau)$ is not a cut pair or is a cut pair with the cut formula being not $p$, then we put $\mathsf{re}_{p}(\pi,\tau)=\pi$. 
Otherwise, we define $\mathsf{re}_{p}(\pi,\tau)$ by induction on $\lvert \pi\rvert$. Let the cut result of  the pair $(\pi,\tau)$ be $\Gamma\Rightarrow \Delta$.

If $\lvert \pi\rvert=0$, then $\Gamma\Rightarrow \Delta, p$ is an initial sequent. Suppose that $\Gamma\Rightarrow \Delta$ is also an initial sequent. Then $\mathsf{re}_{p}(\pi,\tau)$ is defined as the $\infty$-proof consisting only of the sequent $\Gamma\Rightarrow \Delta$. If $\Gamma\Rightarrow \Delta$ is not an initial sequent, then $\Gamma$ has the form $p,\Phi$, and $\tau$ is an $\infty$-proof of the sequent $p,p,\Phi \Rightarrow \Delta$. Applying the non-expansive adequate mapping $\mathsf{acl}_p$ from Lemma \ref{weakcontraction}, we put $\mathsf{re}_{p}(\pi,\tau) := \mathsf{acl}_p (\tau)$.   

Now suppose that $\lvert \pi \rvert >0$. We define $\mathsf{re}_{p}(\pi,\tau)$ according to the last application of an inference rule in $\pi$:  
\begin{align*}
\left(\mathsf{\to_{R(B\to C)}}(\pi_0)
\AXC{$\pi_0$}
\noLine
\UIC{$\Gamma,B\Rightarrow C,\Sigma,p$}
\LeftLabel{$\mathsf{\to_R}$}
\UIC{$\Gamma\Rightarrow B\to C,\Sigma,p$}
,\tau
\right)
&
\longmapsto
\AXC{$\mathsf{re}_p(\pi_0, \mathsf{i}_{B \to C}(\tau))$}
\noLine
\UIC{$\Gamma,B\Rightarrow C,\Sigma$}
\LeftLabel{$\mathsf{\to_R}$}
\RightLabel{ ,}
\UIC{$\Gamma\Rightarrow B\to C,\Sigma$}
\DisplayProof
\\\\
\left(\mathsf{\to_{L(B\to C)}}(\pi_0,\pi_1)
\AXC{$\pi_0$}
\noLine
\UIC{$\Sigma, C\Rightarrow \Delta, p$}
\AXC{$\pi_1$}
\noLine
\UIC{$\Sigma \Rightarrow B,\Delta, p$}
\LeftLabel{$\mathsf{\to_L}$}
\BIC{$\Sigma, B\to C\Rightarrow \Delta, p$}
,\tau
\right)
&
\longmapsto
\AXC{$\mathsf{re}_p (\pi_0, \mathsf{li}_{B\to C} (\tau))$}
\noLine
\UIC{$\Sigma, C\Rightarrow \Delta$}
\AXC{$\mathsf{re}_p (\pi_1, \mathsf{ri}_{B\to C} (\tau))$}
\noLine
\UIC{$\Sigma \Rightarrow B,\Delta$}
\RightLabel{ ,}
\BIC{$\Sigma, B\to C\Rightarrow \Delta$}
\DisplayProof
\\\\
\left(\mathsf{cut_B}(\pi_0,\pi_1)
\AXC{$\pi_0$}
\noLine
\UIC{$\Gamma\Rightarrow B,\Delta, p$}
\AXC{$\pi_1$}
\noLine
\UIC{$\Gamma,B \Rightarrow \Delta, p$}
\BIC{$\Gamma\Rightarrow \Delta, p$}
,\tau
\right)
&
\longmapsto
\AXC{$\mathsf{re}_p (\pi_0, \mathsf{wk}_{\emptyset;B} (\tau))$}
\noLine
\UIC{$\Gamma\Rightarrow B,\Delta$}
\AXC{$\mathsf{re}_p (\pi_1, \mathsf{wk}_{B;\emptyset} (\tau))$}
\noLine
\UIC{$\Gamma, B\Rightarrow \Delta$}
\LeftLabel{$\mathsf{cut}$}
\RightLabel{ ,}
\BIC{$\Gamma\Rightarrow \Delta$}
\DisplayProof
\\\\
\left(\mathsf{\Box_{\Phi;\Sigma,p}(\pi_0,\pi_1,\ldots,\pi_n)}
\AXC{$\pi_0$}
\noLine
\UIC{$\boxtimes \Pi \Rightarrow \boxtimes(A_1,\ldots,A_n)$}
\AXC{$\ldots$}
\LeftLabel{$\mathsf{\Box}$}
\BIC{$\Phi, \Box \Pi \Rightarrow \Box A_1,\ldots,\Box A_n, \Sigma, p$}
,\tau\right)
&
\longmapsto \mathsf{\Box_{\Phi;\Sigma}(\pi_0,\pi_1,\ldots,\pi_n)}
\AXC{$\pi_0$}
\noLine
\UIC{$\boxtimes \Pi \Rightarrow \boxtimes(A_1,\ldots,A_n)$}
\AXC{$\ldots$}
\LeftLabel{$\mathsf{\Box}$}
\BIC{$\Phi, \Box \Pi \Rightarrow \Box A_1,\ldots,\Box A_n, \Sigma$}
\end{align*}
The mapping $\mathsf{re}_p$ is well defined, adequate and non-expansive. 
\end{proof}
\begin{lem}\label{reboxadeq}
Given an adequate $B$-removing mapping $\mathsf{re}_B$, there exists an adequate $\Box B$-removing mapping $\mathsf{re}_{\Box B}$.
\end{lem}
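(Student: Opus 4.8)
The plan is to mimic the structure of the $p$-removing mapping from Lemma \ref{repadeq}, but now the cut formula is $\Box B$, so the critical new case is when \emph{both} $\pi$ and $\tau$ end with a $\Box$-rule acting principally on $\Box B$. For a cut pair $(\pi,\tau)$ that is not genuinely a $\Box B$-cut pair, we simply put $\mathsf{re}_{\Box B}(\pi,\tau)=\pi$. Otherwise we recurse on $\lvert\pi\rvert$, and whenever the last rule of $\pi$ is one of $\mathsf{\to_R}$, $\mathsf{\to_L}$, $\mathsf{cut}$, or a $\Box$-rule whose principal formula is \emph{not} the displayed $\Box B$ in the succedent, we push the cut upward exactly as in Lemma \ref{repadeq}, applying the inversion/weakening mappings from Lemmata \ref{inversion} and \ref{strongweakening} to $\tau$ and calling $\mathsf{re}_{\Box B}$ on the strictly shorter premises. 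Adequacy and non-expansiveness of these cases are inherited from the corresponding cases in the previous lemma.

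The substantive case is the \emph{principal} one: $\pi=\mathsf{\Box_{\Phi;\Sigma}}(\pi_0,\dots,\pi_n,\rho)$ with one of the boxed succedent formulas being $\Box B$ — say $\pi$ proves $\Phi,\Box\Pi\Rightarrow\Box B,\Box A_1,\dots,\Box A_n,\Sigma$, with a right premise $\rho$ proving $\boxtimes\Pi\Rightarrow B$ — while $\tau=\mathsf{\Box_{\Phi';\Sigma'}}(\tau_0,\dots,\tau_k)$ proves $\Box B,\Phi,\Box\Pi\Rightarrow\Delta$ using $\Box B$ on the left, so $B,\Box B$ enters its topsequent: the left premise $\tau_0$ proves $\boxtimes(B,\Pi')\Rightarrow\boxtimes(\dots)$ and each right premise proves $\boxtimes(B,\Pi')\Rightarrow A'_j$. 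Here $\Pi\subseteq\Pi'$ (the left-hand boxes of $\pi$'s conclusion are among those carried through $\tau$'s $\Box$-rule), and we must remove $B$ and $\Box B$ from all of $\tau$'s premises. We do this by first weakening $\rho$ (using $\mathsf{wk}$) to an $\infty$-proof of $\boxtimes\Pi'\Rightarrow B$, and from it build, via $\mathsf{\to_R}$, $\mathsf{wk}$ and a $\Box$-rule, an $\infty$-proof $\sigma$ of $\boxtimes\Pi'\Rightarrow\boxtimes B$ — essentially the construction already used in Theorem \ref{seqtoinfcut} — so that $(\sigma\text{-component},\ \cdot)$ together with each premise of $\tau$ forms a $\Box B$-cut pair on a shorter proof; we then feed these through $\mathsf{re}_{\Box B}$ recursively (the recursion terminates because we induct on $\lvert\tau\rvert$ here, or more precisely on the pair of local heights) to strip $B$ and $\Box B$ out, and reassemble the pieces with a fresh $\Box$-rule into an $\infty$-proof of the cut result $\Phi,\Box\Pi\Rightarrow\Box A_1,\dots,\Box A_n,\Sigma,\Delta$. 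Care is needed because $B$ occurs twice ($B$ and $\Box B$) in $\tau$'s topsequents, so the contraction-type bookkeeping must match what $\mathsf{re}_B$ expects; this is exactly where the "bigger induction measure" remarked on in the introduction shows up.

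Once the mapping is defined, non-expansiveness follows by the usual argument: if $\pi\sim_{n+1}\pi'$ and $\tau\sim_{n+1}\tau'$ then the recursive calls are on $\sim_n$-related (or better) inputs, and since $\mathsf{re}_B$, the inversions, weakenings and $\mathsf{clip}$ are all non-expansive, the outputs agree to depth $n$. Adequacy — that $\mathsf{re}_{\Box B}$ sends $\mathcal P_n\times\mathcal P_n$ into $\mathcal P_n$ — is checked case by case: the non-principal cases reproduce cut applications that were already absent at the relevant depth, and in the principal case the only newly introduced rules are $\mathsf{\to_R}$, $\mathsf{wk}$, $\Box$ and the (adequate) $\mathsf{re}_B$ calls, none of which is $\mathsf{cut}$, so no cut is introduced within the $n$-fragment. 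The main obstacle, as indicated, is getting the principal case right: reconciling the multiset $\Pi\subseteq\Pi'$ mismatch, handling the doubled occurrence of $B$ versus $\Box B$ in $\tau$'s premises when invoking $\mathsf{re}_B$, and choosing an induction measure on which all the recursive calls (both the upward-pushing ones and the ones that peel $\tau$ apart) genuinely decrease.
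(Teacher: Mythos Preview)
Your approach has a genuine gap: you propose to define $\mathsf{re}_{\Box B}$ by direct induction on local heights, but this does not terminate in the principal--principal case. When both $\pi$ and $\tau$ end with $\Box$-rules in which $\Box B$ is principal, you must strip $B$ and $\Box B$ from \emph{all} of $\tau$'s premises, including the right premises $\tau_j$. Those right premises begin new main fragments, so $\lvert\tau_j\rvert$ bears no relation to $\lvert\tau\rvert$; likewise the proof $\sigma$ you build from $\pi$'s right premise $\rho$ has local height unrelated to $\lvert\pi\rvert$. No measure built from the pair $(\lvert\pi\rvert,\lvert\tau\rvert)$ decreases on these recursive calls, so your claim that ``the recursion terminates because we induct on $\lvert\tau\rvert$'' is false. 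This is precisely why the paper does \emph{not} proceed by induction here: it defines an operator $\mathsf{G}_{\Box B}\colon\mathcal{R}_{\Box B}\to\mathcal{R}_{\Box B}$ and obtains $\mathsf{re}_{\Box B}$ as its unique fixed point via the Prie\ss-Crampe theorem (Theorem~\ref{fixpoint}). The ultrametric $l_2$ and the relation $\sim_{n,k}$ from Section~\ref{SecUlt} are engineered exactly so that calls placed under a right $\Box$-premise count as one fragment deeper (relevant to $n$), while calls within the current fragment must have strictly smaller summed local height (relevant to $k$); this is what makes $\mathsf{G}_{\Box B}$ strictly contractive. Adequacy is then established separately by showing that $\mathsf{G}_{\Box B}$ carries $(n,k)$-adequate maps to $(n,k{+}1)$-adequate ones.

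There is also a secondary error in your principal case: the inclusion $\Pi\subseteq\Pi'$ does not follow from the cut-pair condition. Writing $\Lambda$ for your $\Pi'$, we only know $\Gamma=\Phi,\Box\Pi=\Phi',\Box\Lambda$ as multisets, so some members of $\Box\Pi$ may lie in the side context $\Phi'$ rather than in $\Box\Lambda$. The paper handles this mismatch by passing to $\boxtimes(\Pi\cup\Lambda)$ with appropriate weakenings on both sides, and its construction of the new left premise (combining $\pi_0$, $\tau_0$, $\mathsf{clip}(\pi)$, $\mathsf{clip}(\tau)$ through two calls to the parameter $\mathsf{u}$ and one call to $\mathsf{re}_B$) is substantially more intricate than what you sketch.
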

\begin{proof}
Assume we have an adequate $B$-removing mapping $\mathsf{re}_B$. The required $\Box B$-removing mapping $\mathsf{re}_{\Box B}$ is obtained as the fixed-point of a contractive operator $\mathsf G_{\Box B} \colon \mathcal R_{\Box B} \to \mathcal R_{\Box B}$. 

For a mapping $\mathsf u\in \mathcal R_{\Box B}$ and a pair of $\infty$-proofs $(\pi,\tau)$, the $\infty$-proof $\mathsf G_{\Box B}(\mathsf u)(\pi,\tau)$ is defined as follows. 
If $(\pi,\tau)$ is not a cut pair or a cut pair with 
the cut formula being not $\Box B$, then we put $\mathsf G_{\Box B}(\mathsf u)(\pi,\tau)=\pi$. 

Now let $(\pi,\tau)$ be a cut pair with 
the cut formula $\Box B$ and the cut result $\Gamma\Rightarrow \Delta$.
If $\lvert \pi\rvert=0$ or $\lvert \tau \rvert=0$, then $\Gamma\Rightarrow \Delta$ is an initial sequent. In this case, we define $\mathsf G_{\Box B}(\mathsf u)(\pi,\tau)$ as the $\infty$-proof consisting only of the sequent $\Gamma\Rightarrow \Delta$. 

Suppose that $\lvert \pi\rvert>0$ and $\lvert \tau \rvert>0$. We define $\mathsf G_{\Box B}(\mathsf u)(\pi,\tau)$ according to the last application of an inference rule in $\pi$:\\
\begin{align*}
\left(\mathsf{\to_{R(C\to D)}}(\pi_0)
\AXC{$\pi_0$}
\noLine
\UIC{$\Gamma,C\Rightarrow D,\Sigma,\Box B$}
\LeftLabel{$\mathsf{\to_R}$}
\UIC{$\Gamma\Rightarrow C\to D,\Sigma,\Box B$}
,\tau
\right)
&
\longmapsto
\AXC{$\mathsf{u}(\pi_0, \mathsf{i}_{C \to D}(\tau))$}
\noLine
\UIC{$\Gamma,C\Rightarrow D,\Sigma$}
\LeftLabel{$\mathsf{\to_R}$}
\RightLabel{ ,}
\UIC{$\Gamma\Rightarrow C\to D,\Sigma$}
\DisplayProof
\\\\
\left(\mathsf{\to_{L(C\to D)}}(\pi_0,\pi_1)
\AXC{$\pi_0$}
\noLine
\UIC{$\Sigma, D\Rightarrow \Delta, \Box B$}
\AXC{$\pi_1$}
\noLine
\UIC{$\Sigma \Rightarrow C,\Delta, \Box B$}
\LeftLabel{$\mathsf{\to_L}$}
\BIC{$\Sigma, C\to D\Rightarrow \Delta, \Box B$}
,\tau
\right)
&
\longmapsto
\AXC{$\mathsf{u} (\pi_0, \mathsf{li}_{C\to D} (\tau))$}
\noLine
\UIC{$\Sigma, D\Rightarrow \Delta$}
\AXC{$\mathsf{u} (\pi_1, \mathsf{ri}_{C\to D} (\tau))$}
\noLine
\UIC{$\Sigma \Rightarrow C,\Delta$}
\LeftLabel{$\mathsf{\to_L}$}
\RightLabel{ ,}
\BIC{$\Sigma, C\to D\Rightarrow \Delta$}
\DisplayProof
\\\\
\left(\mathsf{cut_C}(\pi_0,\pi_1)
\AXC{$\pi_0$}
\noLine
\UIC{$\Gamma\Rightarrow C,\Delta, \Box B$}
\AXC{$\pi_1$}
\noLine
\UIC{$\Gamma,C \Rightarrow \Delta, \Box B$}
\LeftLabel{$\mathsf{cut}$}
\BIC{$\Gamma\Rightarrow \Delta, \Box B$}
,\tau
\right)
&
\longmapsto
\AXC{$\mathsf{u} (\pi_0, \mathsf{wk}_{\emptyset;C} (\tau))$}
\noLine
\UIC{$\Gamma\Rightarrow C,\Delta$}
\AXC{$\mathsf{u} (\pi_1, \mathsf{wk}_{C;\emptyset} (\tau))$}
\noLine
\UIC{$\Gamma, C\Rightarrow \Delta$}
\LeftLabel{$\mathsf{cut}$}
\RightLabel{ ,}
\BIC{$\Gamma\Rightarrow \Delta$}
\DisplayProof
\end{align*}
\[
\left(\mathsf{\Box_{\Phi;\Sigma,\Box B}(\pi_0,\pi_1,\ldots,\pi_n)}
\AXC{$\pi_0$}
\noLine
\UIC{$\boxtimes \Pi \Rightarrow \boxtimes(A_1,\ldots,A_n)$}
\AXC{$\ldots$}
\LeftLabel{$\mathsf{\Box}$}
\BIC{$\Phi, \Box \Pi \Rightarrow \Box A_1,\ldots,\Box A_n, \Sigma, \Box B$}
,\tau\right)
\longmapsto \mathsf{\Box_{\Phi;\Sigma}(\pi_0,\pi_1,\ldots,\pi_n)}
\AXC{$\pi_0$}
\noLine
\UIC{$\boxtimes \Pi \Rightarrow \boxtimes(A_1,\ldots,A_n)$}
\AXC{$\ldots$}
\LeftLabel{$\mathsf{\Box}$}
\BIC{$\Phi, \Box \Pi \Rightarrow \Box A_1,\ldots,\Box A_n, \Sigma$}
\]

Consider the case when $\pi$ has the form 
\[
\AXC{$\pi_0$}
\noLine
\UIC{$\boxtimes \Pi \Rightarrow \boxtimes (B, A_1,\ldots,A_n)$}
\AXC{$\pi_B$}
\noLine
\UIC{$\boxtimes \Pi \Rightarrow B$}
\AXC{$\pi_1$}
\noLine
\UIC{$\boxtimes \Pi \Rightarrow A_1\quad\ldots$}
\AXC{$\boxtimes \Pi \Rightarrow \boxtimes A_n$}
\LeftLabel{$\mathsf{\Box}$}
\RightLabel{ .}
\QuaternaryInfC{$\Phi, \Box \Pi \Rightarrow \Box B,\Box A_1,\ldots,\Box A_n, \Sigma$}
\DisplayProof
\]
We define $\mathsf G_{\Box B}(\mathsf u)(\pi,\tau)$ according to the last application of an inference rule in $\tau$. 

\begin{align*}
\left(\pi,\mathsf{\to_{R(C\to D)}}(\tau_0)
\AXC{$\tau_0$}
\noLine
\UIC{$\Gamma,\Box B,C\Rightarrow D,\Sigma$}
\LeftLabel{$\mathsf{\to_R}$}
\UIC{$\Gamma,\Box B\Rightarrow C\to D,\Sigma$}
\right)
&\longmapsto
\AXC{$\mathsf{u}(\mathsf{i}_{C \to D}(\pi), \tau_0)$}
\noLine
\UIC{$\Gamma,C\Rightarrow D,\Sigma$}
\LeftLabel{$\mathsf{\to_R}$}
\RightLabel{ ,}
\UIC{$\Gamma\Rightarrow C\to D,\Sigma$}
\DisplayProof
\\\\
\left(\pi,\mathsf{\to_{L(C\to D)}}(\tau_0,\tau_1)
\AXC{$\tau_0$}
\noLine
\UIC{$\Sigma, \Box B, D\Rightarrow \Delta$}
\AXC{$\tau_1$}
\noLine
\UIC{$\Sigma, \Box B \Rightarrow C,\Delta$}
\LeftLabel{$\mathsf{\to_L}$}
\BIC{$\Sigma, \Box B, C\to D\Rightarrow \Delta$}
\right)
&\longmapsto
\AXC{$\mathsf{u} (\mathsf{li}_{C\to D}(\pi), \tau_0)$}
\noLine
\UIC{$\Sigma, D\Rightarrow \Delta$}
\AXC{$\mathsf{u} ( \mathsf{ri}_{C\to D}(\pi_1), \tau_1)$}
\noLine
\UIC{$\Sigma \Rightarrow C,\Delta$}
\LeftLabel{$\mathsf{\to_L}$}
\RightLabel{ ,}
\BIC{$\Sigma, C\to D\Rightarrow \Delta$}
\DisplayProof
\\\\
\left(\pi,\mathsf{cut_C}(\tau_0,\tau_1)
\AXC{$\tau_0$}
\noLine
\UIC{$\Gamma, \Box B\Rightarrow C,\Delta$}
\AXC{$\tau_1$}
\noLine
\UIC{$\Gamma, \Box B,C \Rightarrow \Delta$}
\LeftLabel{$\mathsf{cut}$}
\BIC{$\Gamma, \Box B\Rightarrow \Delta$}
\right)
&\longmapsto
\AXC{$\mathsf{u} (\mathsf{wk}_{\emptyset;C}(\pi), \tau_0)$}
\noLine
\UIC{$\Gamma\Rightarrow C,\Delta$}
\AXC{$\mathsf{u} (\mathsf{wk}_{C;\emptyset} (\pi), \tau_1)$}
\noLine
\UIC{$\Gamma, C\Rightarrow \Delta$}
\LeftLabel{$\mathsf{cut}$}
\RightLabel{ ,}
\BIC{$\Gamma\Rightarrow \Delta$}
\DisplayProof
\\\\
(\pi,\mathsf{\Box_{\Phi',\Box B;\Sigma'}(\tau_0,\tau_1,\ldots,\tau_k)})&\longmapsto\mathsf{\Box_{\Phi';\Sigma'}(\tau_0,\tau_1,\ldots,\tau_k)}
\end{align*}

It remains to consider the case when $\tau$ has the form
\[
\AXC{$\tau_0$}
\noLine
\UIC{$\boxtimes \Lambda,\boxtimes B \Rightarrow \boxtimes (C_1,\ldots,C_k)$}
\AXC{$\tau_1$}
\noLine
\UIC{$\boxtimes \Lambda,\boxtimes B \Rightarrow C_1$}
\AXC{$\ldots$}
\AXC{$\boxtimes \Lambda,\boxtimes B \Rightarrow \boxtimes C_k$}
\LeftLabel{$\mathsf{\Box}$}
\RightLabel{ .}
\QuaternaryInfC{$\Phi', \Box \Lambda,\Box B \Rightarrow \Box C_1,\ldots,\Box C_k, \Sigma'$}
\DisplayProof
\]

Notice that the sequent $\Gamma\Rightarrow\Delta$, the sequent $\Phi, \Box \Pi \Rightarrow \Box A_1,\ldots,\Box A_n, \Sigma$, and the sequent $\Phi', \Box \Lambda \Rightarrow \Box C_1,\ldots,\Box C_k, \Sigma'$ are the same.

Let $I:=\{i\mid A_i\notin{C_1,\ldots,C_k}\}$ and $J:=\{i\mid C_i\notin{A_1,\ldots,A_n}\}$, let $\pi'_B$ be the followind $\infty$-proof:
\[
\AXC{$\mathsf{wk_{\varnothing,\Box B}}(\pi_B)$}
\noLine
\UIC{$\boxtimes\Pi\Rightarrow \boxtimes B$}
\AXC{$\pi_B$}
\noLine
\UIC{$\boxtimes\Pi\Rightarrow B$}
\LeftLabel{$\Box$}
\RightLabel{,}
\BIC{$\boxtimes\Pi\Rightarrow\Box B$}
\DisplayProof
\]
and condsider the following $\infty$-proofs:
$$\psi_1:=\mathsf u(\mathsf{wk}_{\Lambda\backslash \Pi,\Box{\Lambda\backslash\Pi};\{\boxtimes C_i\}_{i\in J}}(\pi_0),\mathsf{wk}_{\Pi\cup\Lambda,\Box(\Pi\backslash\Lambda);\{\boxtimes A_i\}_{i\in I},\{C_i\}}(\mathsf{clip}(\tau))),$$
$$\psi_2:=\mathsf u(\mathsf{wk}_{\Pi\cup\Lambda,\Box{\Lambda\backslash\Pi};\{\boxtimes C_i\}_{i\in J},\{A_i\}}(\mathsf{clip}(\pi)),\mathsf{wk}_{\Pi\backslash\Lambda,\Box(\Pi\backslash\Lambda);\{\boxtimes A_i\}_{i\in I}}(\tau_0)),$$
$$\phi_j:=\mathsf u(\mathsf{wk}_{\boxtimes(\Lambda\backslash\Pi);C_j}(\pi'_B),\mathsf{re_B}(\mathsf{wk}_{\boxtimes(\Lambda\backslash\Pi,\Box B;\varnothing)}(\pi_B),\mathsf{wk}_{\boxtimes(\Pi\backslash\Lambda;\varnothing)}(\tau_j))).$$
We define $\mathsf G_{\Box B}(\mathsf u)(\pi,\tau)$ as
\[
\AXC{$\mathsf{re_B}(\psi_1,\psi_2)$}
\noLine
\UIC{$\boxtimes(\Pi\cup\Lambda)\Rightarrow\boxtimes {A_i},\{\boxtimes C_j\}_{j\in J}$}
\AXC{$(\mathsf{wk}_{\boxtimes(\Lambda\backslash\Pi)}(\pi_i))$}
\noLine
\UIC{$(\boxtimes(\Pi\cup\Lambda)\Rightarrow A_i)$}
\AXC{$\phi_j$}
\noLine
\UIC{$(\boxtimes(\Pi\cup\Lambda)\Rightarrow C_j))_{j\in J}$}
\LeftLabel{$\Box$}
\RightLabel{.}
\TIC{$\Gamma\Rightarrow\Delta$}
\DisplayProof
\]

Now the operator $\mathsf {G_{\Box B}}$ is well-defined.
By the case analysis according to the definition of $\mathsf {G_{\Box B}}$, we see that $\mathsf {G_{\Box B}} (\mathsf u) $ is non-expansive and belongs to $\mathcal R_{\Box B}$ whenever $\mathsf u \in \mathcal R_{\Box B}$.

We claim that $\mathsf {G_{\Box B}}$ is contractive. It sufficient to check that for any $\mathsf u, \mathsf v\in \mathcal R_{\Box B}$ and any $n,k\in\mathbb N$ we have 
\[\mathsf u\sim_{n,k}\mathsf v \Rightarrow \mathsf {G_{\Box B}(u)}\sim_{n,k+1}\mathsf{G_{\Box B}(v)},\]
which we prove by case analysis.


Now we define the required $\Box B$-removing mapping $\mathsf{re}_{\Box B}$ as the fixed-point of the the operator $\mathsf G_{\Box B} \colon \mathcal R_{\Box B} \to \mathcal R_{\Box B}$, which exists by Lemma \ref{Comp R_A} and Theorem \ref{fixpoint}.


It remains to check that the mapping $\mathsf{re}_{\Box B}$ is adequate. For some $n,k\in\mathbb N$, let us call a mapping $\mathsf u\in \mathcal R_{\Box B}$ $(n,k)$-adequate if it satisfies the following two conditions: $\mathsf u(\pi,\tau)\in\mathcal P_i$ for any $i\leqslant n$ and any $\pi, \tau \in \mathcal P_i$; $\mathsf u(\pi,\tau)\in\mathcal P_{n+1}$ whenever $\pi, \tau \in \mathcal P_{n+1}$ and $\lvert \pi\rvert + \lvert \tau \rvert <k$.

By case analysis we establish that $\mathsf G_{\Box B}(\mathsf u)$ is $(n,k+1)$-adequate for any $(n,k)$-adequate $\mathsf u\in\mathcal R_{\Box B}$. Notice that if a mapping $\mathsf{u} $ is $(n,k)$-adequate for all $k\in\mathbb N$, then it is also $(n+1,0)$-adequate. Now by induction on $n$ with a subinduction on $k$, we immediately obtain that the mapping $\mathsf {re}_{\Box B}$, which is a fixed-point of $\mathsf G_{\Box B}$, is $(n,k)$-adequate for all $n,k\in\mathbb N$. Therefore the mapping $\mathsf {re}_{\Box B}$ is adequate. 


\end{proof}

\begin{lem}\label{reabadeq}
For any formula $A$, there exists an adequate $A$-removing mapping $\mathsf{re}_A$.
\end{lem}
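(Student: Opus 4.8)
The plan is a straightforward induction on the structure of the formula $A$, using the mappings already built in Lemmas~\ref{repadeq} and~\ref{reboxadeq} at the leaves and the strongly admissible rules of Lemmas~\ref{strongweakening} and~\ref{inversion} as glue.

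\emph{Base cases.} If $A$ is an atomic proposition, the required mapping is the $\mathsf{re}_p$ of Lemma~\ref{repadeq}. If $A=\bot$, I would put $\mathsf{re}_\bot(\pi,\tau):=\mathsf{i}_\bot(\pi)$ when $(\pi,\tau)$ is a cut pair with cut formula $\bot$ --- so $\pi$ proves $\Gamma\Rightarrow\Delta,\bot$ and $\mathsf{i}_\bot(\pi)$ proves the cut result $\Gamma\Rightarrow\Delta$ --- and $\mathsf{re}_\bot(\pi,\tau):=\pi$ otherwise; this is non-expansive and adequate because $\mathsf{i}_\bot$ is.

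\emph{Inductive step.} For $A=\Box B$, apply Lemma~\ref{reboxadeq} to the adequate $B$-removing mapping $\mathsf{re}_B$ given by the induction hypothesis. The only case requiring an explicit construction is $A=B\to C$. Let $\mathsf{re}_B$, $\mathsf{re}_C$ be the adequate removing mappings furnished by the induction hypothesis (for the proper subformulas $B$, $C$ of $B\to C$). On a cut pair $(\pi,\tau)$ with cut formula $B\to C$ and cut result $\Gamma\Rightarrow\Delta$, I would set
\[
\mathsf{re}_{B\to C}(\pi,\tau):=\mathsf{re}_C\bigl(\mathsf{re}_B(\mathsf{wk}_{\emptyset;C}(\mathsf{ri}_{B\to C}(\tau)),\,\mathsf{i}_{B\to C}(\pi)),\,\mathsf{li}_{B\to C}(\tau)\bigr),
\]
and $\mathsf{re}_{B\to C}(\pi,\tau):=\pi$ on all other inputs. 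The computation behind this is the usual reduction of a cut on an implication: $\mathsf{i}_{B\to C}(\pi)$ proves $\Gamma,B\Rightarrow C,\Delta$, $\mathsf{ri}_{B\to C}(\tau)$ proves $\Gamma\Rightarrow B,\Delta$ and $\mathsf{li}_{B\to C}(\tau)$ proves $\Gamma,C\Rightarrow\Delta$; cutting the first two over $B$ (after weakening the second by $C$) yields $\Gamma\Rightarrow C,\Delta$, and cutting this with $\mathsf{li}_{B\to C}(\tau)$ over $C$ yields the cut result $\Gamma\Rightarrow\Delta$.

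What then remains is routine: checking that the displayed sequents really do match as multisets, and that $\mathsf{re}_{B\to C}$ is non-expansive and adequate. The latter holds because $\mathsf{wk}_{\emptyset;C}$, $\mathsf{li}_{B\to C}$, $\mathsf{ri}_{B\to C}$, $\mathsf{i}_{B\to C}$ are non-expansive and adequate, $\mathsf{re}_B$, $\mathsf{re}_C$ are non-expansive and adequate by the induction hypothesis, and a composition of non-expansive (respectively, adequate) mappings is again non-expansive (respectively, adequate) --- for adequacy one simply unfolds the definition: membership of all arguments in $\mathcal P_n$ propagates through the intermediate results to the output. I do not anticipate any genuine difficulty here; the one delicate construction, the modal case, has already been completed in Lemma~\ref{reboxadeq}, so this lemma merely packages the atomic, Boolean and modal building blocks into removing mappings for arbitrary formulas.
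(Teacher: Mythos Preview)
Your proposal is correct and matches the paper's proof essentially verbatim: the same induction on $A$, the same use of Lemmas~\ref{repadeq} and~\ref{reboxadeq} for the atomic and modal cases, the same $\mathsf{i}_\bot$ definition for $\bot$, and the identical composite $\mathsf{re}_C(\mathsf{re}_B(\mathsf{wk}_{\emptyset;C}(\mathsf{ri}_{B\to C}(\tau)),\mathsf{i}_{B\to C}(\pi)),\mathsf{li}_{B\to C}(\tau))$ for the implication case. If anything, your write-up is slightly more explicit about why the displayed term has the right root sequent and why adequacy and non-expansiveness propagate through the composition.
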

\begin{proof}

We define $\mathsf{re}_A$ by induction on the structure of the formula $A $.



Case 1: $A$ has the form $p$. In this case, $\mathsf{re}_{p} $ is defined by Lemma \ref{repadeq}.
 
Case 2: $A$ has the form $\bot$. Then we put $\mathsf{re}_{\bot}(\pi,\tau):= \mathsf{i}_\bot (\pi)$, where $\mathsf{i}_\bot$ is a non-expansive adequate mapping from Lemma \ref{inversion}.

Case 3: $A$ has the form $B\to C$. Then we put    
\[\mathsf{re}_{B \to C}(\pi,\tau):=\mathsf{re}_C(\mathsf{re}_B(\mathsf{wk}_{\emptyset, C}(\mathsf{ri}_{B\to C}(\tau)),\mathsf{i}_{B\to C}(\pi)),\mathsf{li}_{B\to C}(\tau))\,\]
where $\mathsf{ri}_{B \to C}$, $\mathsf{i}_{B \to C}$, $\mathsf{li}_{B \to C}$ are non-expansive adeqate mappings from Lemma \ref{inversion} and $\mathsf{wk}_{\emptyset, C}$ is a non-expansive adequate mapping from Lemma \ref{strongweakening}.

Case 4: $A$ has the form $\Box B$. By the induction hypothesis, there is an adequate $B$-removing mapping $\mathsf{re}_B$. The required $\Box B$-removing mapping $\mathsf{re}_{\Box B}$ exists by Lemma \ref{reboxadeq}.
\end{proof}

A mapping $\mathsf{u}\colon \mathcal P \to\mathcal P$ is called \emph{root-preserving} if it maps $\infty$-proofs to $\infty$-proofs of the same sequents. Let $\mathcal{T}$ denote the set of all root-preserving non-expansive mappings from $\mathcal P$ to $\mathcal P$. 

\begin{lem}\label{Comp T}
The pair $(\mathcal T, l_1)$ is a non-empty spherically complete ultrametric space.
\end{lem}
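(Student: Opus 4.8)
The plan is to view $\mathcal T$ as a subset of $\mathcal F_1$ and transfer spherical completeness from there. Since $\mathcal T\subseteq\mathcal F_1$, the map $l_1$ restricts to an ultrametric on $\mathcal T$, so only non-emptiness and spherical completeness need to be checked. Non-emptiness is the easy part: the identity mapping $\mathsf{id}\colon\mathcal P\to\mathcal P$ is an isometry, hence non-expansive, and it sends every $\infty$-proof to itself and thus to an $\infty$-proof of the same sequent, so $\mathsf{id}\in\mathcal T$. For spherical completeness I would reuse the argument behind Proposition \ref{SphCom} (just as the spherical completeness of $\mathcal R_A$ in Lemma \ref{Comp R_A} was obtained that way), adding one observation about root sequents.

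Concretely, given a chain $(\bar B_\alpha)_{\alpha\in A}$ of closed balls of $(\mathcal T,l_1)$ with centers $\mathsf g_\alpha\in\mathcal T$ and radii $r_\alpha$, I would first pass to the closed balls $\bar B^{\mathcal F_1}_\alpha$ of $(\mathcal F_1,l_1)$ with the same centers and radii, noting that $\bar B_\alpha=\mathcal T\cap \bar B^{\mathcal F_1}_\alpha$. These $\bar B^{\mathcal F_1}_\alpha$ again form a chain: for any $\alpha,\beta$, assuming $\bar B_\alpha\subseteq \bar B_\beta$, the center $\mathsf g_\alpha$ lies both in $\bar B^{\mathcal F_1}_\alpha$ and in $\bar B^{\mathcal F_1}_\beta$, and two intersecting closed balls in an ultrametric space are comparable. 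Proposition \ref{SphCom} then yields some $\mathsf f^\ast\in\bigcap_\alpha \bar B^{\mathcal F_1}_\alpha\subseteq\mathcal F_1$, and it remains only to check that $\mathsf f^\ast\in\mathcal T$; once that is done, $\mathsf f^\ast\in\bigcap_\alpha(\mathcal T\cap \bar B^{\mathcal F_1}_\alpha)=\bigcap_\alpha \bar B_\alpha$, as required.

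The heart of the matter is that root-preservation is a ``fragment-local'' property that survives the limit. In the construction underlying Proposition \ref{SphCom} the $\infty$-proof $\mathsf f^\ast(\pi)$ is assembled fragment by fragment, each finite fragment being copied from $\mathsf g_\alpha(\pi)$ for a suitable index $\alpha$; in particular the root sequent of $\mathsf f^\ast(\pi)$ equals that of $\mathsf g_\alpha(\pi)$ for some $\alpha$, which in turn equals the root sequent of $\pi$ because $\mathsf g_\alpha\in\mathcal T$. (Equivalently, one checks directly that $l_1(\mathsf f,\mathsf g)<1/2$ already forces $\mathsf f(\pi)\sim_1\mathsf g(\pi)$, hence equality of root sequents, for every $\pi$.) I expect the only real work here to be bookkeeping: reproducing enough of the proof of Proposition \ref{SphCom} to be sure that the outputs of the limit mapping inherit their finite fragments --- and so their roots --- from the centers $\mathsf g_\alpha$. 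No genuinely new ideas beyond that seem to be needed.
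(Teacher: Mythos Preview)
Your proposal is correct and matches the approach the paper intends: the paper gives no explicit proof of Lemma~\ref{Comp T}, but for the parallel Lemma~\ref{Comp R_A} it simply says the argument is ``analogous to the proof of the spherical completeness of $(\mathcal F_m,l_m)$'' and then checks non-emptiness separately, which is precisely your strategy of reducing to Proposition~\ref{SphCom} and verifying that the extra condition (here, root-preservation) survives the limit construction. Your observation that in every case of that construction the value $\mathsf f^\ast(\pi)$ inherits its root from some $\mathsf g_\alpha(\pi)$ is exactly the point, and the identity map is a fine witness for non-emptiness.
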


\begin{thm}[cut-elimination]
\label{infcuttoinf}
If $\mathsf{Grz_\infty} +\mathsf{cut}\vdash\Gamma\Rightarrow\Delta$, then $\mathsf{Grz_\infty}\vdash\Gamma\Rightarrow\Delta$.
\end{thm}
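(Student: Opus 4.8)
The plan is to build a single continuous (in fact non-expansive) mapping $\mathsf{CE}\colon\mathcal P\to\mathcal P$ that sends every $\infty$-proof of $\mathsf{Go}_\infty+\mathsf{cut}$ to a cut-free $\infty$-proof of the same sequent, and then read off the theorem by applying $\mathsf{CE}$ to a witnessing $\infty$-proof of $\Gamma\Rightarrow\Delta$. (Note the statement is written with $\mathsf{Grz}$, but it should read $\mathsf{Go}_\infty+\mathsf{cut}\vdash\Gamma\Rightarrow\Delta\Rightarrow\mathsf{Go}_\infty\vdash\Gamma\Rightarrow\Delta$.) The mapping $\mathsf{CE}$ will be obtained as the fixed point of a strictly contractive operator on the space $(\mathcal T, l_1)$ of root-preserving non-expansive self-maps of $\mathcal P$, which is non-empty and spherically complete by Lemma~\ref{Comp T}, so the Prie\ss--Crampe fixed-point theorem (Theorem~\ref{fixpoint}) applies.

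I would define the operator $\mathsf H\colon\mathcal T\to\mathcal T$ by recursion on the last rule of the input $\infty$-proof $\pi$. For an initial sequent, $\mathsf H(\mathsf t)(\pi)=\pi$. For a logical or modal rule $\pi=\mathsf r(\pi_1,\dots,\pi_m)$ that is \emph{not} a cut, we set $\mathsf H(\mathsf t)(\pi)=\mathsf r(\mathsf t(\pi_1),\dots,\mathsf t(\pi_m))$, i.e.\ we push the cut-elimination map to the premises and reapply the same rule. The only interesting case is $\pi=\mathsf{cut}_A(\pi_0,\pi_1)$: here we first recursively clean the premises, obtaining $\pi_0' = \mathsf t(\pi_0)$ and $\pi_1' = \mathsf t(\pi_1)$, then form the cut pair $(\pi_0',\pi_1')$ with cut formula $A$ and apply the adequate $A$-removing mapping $\mathsf{re}_A$ supplied by Lemma~\ref{reabadeq}, setting $\mathsf H(\mathsf t)(\pi) = \mathsf{re}_A(\pi_0',\pi_1')$. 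Since $\mathsf{re}_A$ is $A$-removing, this is an $\infty$-proof of the cut result, which is exactly the conclusion sequent, so $\mathsf H(\mathsf t)$ is root-preserving; non-expansiveness of $\mathsf H(\mathsf t)$ follows from non-expansiveness of all the rule constructors and of $\mathsf{re}_A$.

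The crucial point is that $\mathsf H$ is strictly contractive, i.e.\ $\mathsf t\sim_{n,k}\mathsf s\Rightarrow\mathsf H(\mathsf t)\sim_{n,k+1}\mathsf H(\mathsf s)$ in the sense of the metric $l_1$ on $\mathcal T$. This I would check by the usual case analysis on the last rule: for the non-cut rules the premises have strictly smaller local height, which buys the extra index; for the cut case, the inputs to $\mathsf{re}_A$ are $\mathsf t(\pi_i)$ and $\mathsf s(\pi_i)$, which agree to a higher degree by the hypothesis $\mathsf t\sim_{n,k}\mathsf s$, and $\mathsf{re}_A$ being non-expansive propagates this. Once $\mathsf H$ is contractive, Theorem~\ref{fixpoint} gives a unique fixed point $\mathsf{CE}=\mathsf H(\mathsf{CE})\in\mathcal T$. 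To finish, I would argue that $\mathsf{CE}$ actually lands in cut-free proofs: using adequacy of each $\mathsf{re}_A$ and the same bootstrapping argument as in Lemma~\ref{reboxadeq} (induction on $n$ with subinduction on $k$, using that the fixed-point equation forces $\mathsf{CE}(\pi)\in\mathcal P_n$ for larger and larger $n$), one concludes $\mathsf{CE}(\pi)\in\mathcal P_n$ for all $n$, i.e.\ $\mathsf{CE}(\pi)$ is an $\infty$-proof of $\mathsf{Go}_\infty$.

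The main obstacle is confirming that the fixed point is genuinely cut-free rather than merely cut-free in every finite fragment up to the recursion depth reached so far. The subtlety is that $\mathsf H$'s recursion on the premises only guarantees cut-freeness locally, and one must combine the adequacy of the mappings $\mathsf{re}_A$ (which say: applied to cut-free-to-depth-$n$ arguments they yield cut-free-to-depth-$n$ output) with the self-referential fixed-point equation $\mathsf{CE}=\mathsf H(\mathsf{CE})$ to push cut-freeness arbitrarily deep; this is precisely the $(n,k)$-adequacy induction already carried out for $\mathsf{re}_{\Box B}$, transplanted to the operator $\mathsf H$ on $\mathcal T$. Everything else — root-preservation, non-expansiveness, contractivity — is a routine rule-by-rule verification of the kind already performed several times in the preceding lemmata.
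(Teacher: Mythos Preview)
Your approach is essentially the paper's: define an operator on $(\mathcal T,l_1)$ that pushes the transformation to the premises for non-cut rules and invokes $\mathsf{re}_A$ for cuts, prove it is strictly contractive, take the Prie\ss--Crampe fixed point, and then run an $(n,k)$-graded induction to show the fixed point lands in $\bigcap_n\mathcal P_n$.

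The one substantive difference is in the cut clause. The paper sets $\mathsf F(\mathsf u)(\mathsf{cut}_A(\pi_0,\pi_1))=\mathsf{re}_A(\pi_0,\pi_1)$, i.e.\ it feeds the \emph{raw} premises to $\mathsf{re}_A$; you instead set $\mathsf H(\mathsf t)(\mathsf{cut}_A(\pi_0,\pi_1))=\mathsf{re}_A(\mathsf t(\pi_0),\mathsf t(\pi_1))$. Both choices yield a strictly contractive operator (the paper's trivially so in this clause, since it does not mention $\mathsf u$). However, your version is the one that actually makes the ``$(n,k)$-free'' induction go through: adequacy of $\mathsf{re}_A$ only guarantees $\mathsf{re}_A(\sigma,\tau)\in\mathcal P_n$ when $\sigma,\tau\in\mathcal P_n$, so one needs the recursively cleaned premises $\mathsf t(\pi_i)$, not the original $\pi_i$, to conclude that the output is in $\mathcal P_n$. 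With the paper's clause as literally written, the step ``$\mathsf F(\mathsf u)$ is $(n,k{+}1)$-free whenever $\mathsf u$ is $(n,k)$-free'' fails in the cut case, so your formulation is the intended one.
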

\begin{proof}
We obtain the required cut-elimination mapping $\mathsf{ce}$ as the fixed-point of a contractive operator $\mathsf F \colon \mathcal T \to \mathcal T$. 

For a mapping $\mathsf u\in \mathcal T$ and an $\infty$-proof $\pi$, the $\infty$-proof $\mathsf F(\mathsf u)(\pi)$ is defined as follows. If $\lvert \pi\rvert=0$, then we define $\mathsf F(\mathsf u)(\pi)$ to be $\pi$.

Otherwise, we define $\mathsf F(\mathsf u)(\pi)$ according to the last application of an inference rule in $\pi$:
\begin{align*}
\mathsf{\to_{R(A\to B)}}(\pi_0) &\longmapsto \mathsf{\to_{R(A\to B)}}(\mathsf u(\pi_0))\\
\mathsf{\to_{L(A\to B)}}(\pi_0,\pi_1) &\longmapsto \mathsf{\to_{L(A\to B)}}(\mathsf u(\pi_0),\mathsf u(\pi_1))\\
\mathsf{\Box_{A_1,\ldots,A_n}}(\pi_0,\pi_1,\ldots,\pi_n)&\longmapsto\mathsf{\Box_{A_1,\ldots,A_n}}(\mathsf u(\pi_0),\mathsf u(\pi_1),\ldots,\mathsf u(\pi_n))\\
\mathsf{cut_A}(\pi_0,\pi_1) &\longmapsto \mathsf{re_A}(\pi_0,\pi_1) 
\end{align*}
Now the operator $\mathsf F$ is well-defined.
By the case analysis according to the definition of $\mathsf F$, we see that $\mathsf F (\mathsf u) $ is non-expansive and belongs to $\mathcal T$ whenever $\mathsf u \in \mathcal T$.

We claim that $\mathsf {F}$ is contractive. It sufficient to check that for any $\mathsf u, \mathsf v\in \mathcal T$ and any $n,k\in\mathbb N$ we have 
\[\mathsf u\sim_{n,k}\mathsf v \Rightarrow \mathsf {F(u)}\sim_{n,k+1}\mathsf{F(v)}.\] which is done by case analysis.

Now we define the required cut-elimination mapping $\mathsf{ce}$ as the fixed-point of the the operator $\mathsf F \colon \mathcal T \to \mathcal T$, which exists by Lemma \ref{Comp T} and Theorem \ref{fixpoint}.

For some $n,k\in\mathbb N$, let us call a mapping $\mathsf u\in \mathcal T$ $(n,k)$-free if it satisfies the following two conditions: $\mathsf u(\pi)\in\mathcal P_n$ for any $\pi \in \mathcal P$; $\mathsf u(\pi)\in\mathcal P_{n+1}$ whenever $\lvert \pi\rvert<k$. 



By case analysis we established that $\mathsf F(\mathsf u)$ is $(n,k+1)$-free for any $(n,k)$-free $\mathsf u\in\mathcal T$. Notice that if a mapping $\mathsf{u} $ is $(n,k)$-free for all $k\in\mathbb N$, then it is also $(n+1,0)$-free. Now by induction on $n$ with a subinduction on $k$, we immediately obtain that the mapping $\mathsf {ce}$, which is a fixed-point of $\mathsf F$, is $(n,k)$-free for all $n,k\in\mathbb N$. Therefore, for any $\infty$-proof $\pi$, the $\infty$-proof $\mathsf {ce}(\pi)$ does not contain instances of the rule $\mathsf {(cut)}$.

Now assume $\mathsf{Grz_\infty} +\mathsf{cut}\vdash\Gamma\Rightarrow\Delta$. Take an $\infty$-proof of the sequent $\Gamma\Rightarrow\Delta$ in the system $\mathsf{Grz_\infty} +\mathsf{cut}$ and apply the mapping $\mathsf{ce}$ to it. We obtain an $\infty$-proof of the same sequent in the system $\mathsf{Grz_\infty}$.
\end{proof}
Theorem \ref{cutelimgrz} is now established as a direct consequence of Theorem \ref{seqtoinfcut}, Theorem \ref{infcuttoinf}, and Theorem \ref{inftoseq}.

\section{Conclusions and Future Work}
We have proven the cut elimination theorem for the logic $\mathsf{Go}$ syntacticly by constructing a continuous cut eliminating mapping for proofs in a system allowing non-well-founded proofs. 
This method seems to provide uniform approach to cut-elimination in different logics. Indeed --- we can write systems for logics $K4$, $GL$, and $Grz$ by slightly modifying the system $\mathsf{Go_{Seq}}$, define appropriate ultrametrics on them, and leave the proof of cut elimilation virtually unchanged. The next step is to take this method to more complicated logics like the logic of transitive closure. 

\bibliographystyle{aiml18}
\bibliography{aiml18}

\begin{thebibliography}{10}
\expandafter\ifx\csname url\endcsname\relax
  \def\url#1{\texttt{#1}}\fi
\expandafter\ifx\csname urlprefix\endcsname\relax\def\urlprefix{URL }\fi
\newcommand{\enquote}[1]{``#1''}

\bibitem{GiMi}
Gianantonio, P.~D. and M.~Miculan, \emph{A unifying approach to recursive and
  co-recursive definitions}, in: \emph{Geuvers H., Wiedijk F. (eds). Types for
  Proofs and Programs. TYPES 2002},  Lecture Notes in Computer Science
  \textbf{2646} (2002), pp. 148--161.

\bibitem{Gore}
Gor\'e, R., \emph{Tableau methods for modal and temporal logics}, in:
  \emph{Handbook of Tableau Methods} (1999), pp. 297--396.

\bibitem{Ramanayake}
Gor\'e, R. and R.~Ramanayake, \emph{Cut-elimination for weak grzegorczyk logic
  go}, Studia Logica \textbf{102} (2014), pp.~1--27.

\bibitem{Iemhoff}
Iemhoff, R., \emph{Reasoning in circles}, Tributes. Liber Amicorum Alberti. A
  tribute to Albert Visser. \textbf{30} (2016), pp.~165--176.

\bibitem{Litak}
Litak, T., \emph{The non-reflexive counterpart of grz}, Bulletin of the Section
  of Logic \textbf{36} (2007), pp.~195--208.

\bibitem{PrCr}
Prie\ss-Crampe, S., \emph{Der banachsche fixpunktsatz f\"ur ultrametrische
  r\"aume}, Results in Mathematics \textbf{18} (1990), pp.~178--186.

\bibitem{SavSham2}
Savateev, Y. and D.~Shamkanov, \emph{Non-well-founded proofs for the
  grzegorczyk modal logic}, submitted to: The Review of Symbolic Logic .

\bibitem{SavSham}
Savateev, Y. and D.~Shamkanov, \emph{Cut-elimination for the modal grzegorczyk
  logic via non-well-founded proofs}, in: \emph{Kennedy J., de Queiroz R. (eds)
  Logic, Language, Information, and Computation. WoLLIC 2017},  Lecture Notes
  in Computer Science  \textbf{10388} (2017), pp. 321--335.

\bibitem{Shor}
Sch\"orner, E., \emph{Ultrametric fixed point theorems and applications},
  Valuation Theory and Its Applications, Volume II \textbf{33} (2003),
  pp.~353--359.

\bibitem{Sham}
Shamkanov, D., \emph{Circular proofs for the g{\"o}del-l{\"o}b provability
  logic}, Mathematical Notes \textbf{96} (2014), pp.~575--585.

\end{thebibliography}

\Appendix
Here we recall basic notions of the theory of ultrametric spaces (cf. \cite{Shor}) and consider several examples concerning $\infty$-proofs.

An \emph{ultrametric space} $(M,d)$ is a metric space that satisfies a stronger version of the triangle inequality: for any $x,y,z\in M$
$$d(x,z) \leqslant \max \{d(x,y), d(y,z)\}.$$ 

For $x\in M$ and $r\in [0, + \infty)$, the set $B_r(x)=\{ y\in M \mid d(x,y) \leqslant r\}$ is called the \emph{closed ball} with \emph{center $x$} and \emph{radius $r$}. Recall that a metric space $(M,d)$ is \emph{complete} if any descending sequence of closed balls, with radii tending to $0$, has a common point. An ultametric space $(M,d)$ is called \emph{spherically complete} if an arbitrary descending sequence of closed balls has a common point.

For example, consider the set $\mathcal P$ of all $\infty$-proofs of the system $\mathsf{Go}_{\infty} +\mathsf{cut} $. We can define an ultrametric $d_{\mathcal P} \colon {\mathcal P} \times {\mathcal P} \to [0,1]$ on $\mathcal P$ by putting
\[d_{\mathcal P}(\pi,\tau) = 
\inf\{\frac{1}{2^n} \mid  \pi \sim_n \tau\}.
\]
We see that $d_{\mathcal P}(\pi , \tau) \leqslant 2^{-n}$ if and only if $\pi \sim_n \tau$. Thus, the ultrametric $d_{\mathcal P}$ can be considered as a measure of similarity between $\infty$-proofs.

\begin{propos}[\ref{ComplP}]\label{ComplP1}
$(\mathcal{P},d_{\mathcal P})$ is a (spherically) complete ultrametric space.
\end{propos}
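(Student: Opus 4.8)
The plan is to first confirm that $d_{\mathcal P}$ is genuinely an ultrametric, and then to obtain spherical completeness by gluing together the stable parts along a descending sequence of balls. For the metric axioms I would record the elementary observations that each $\sim_n$ is an equivalence relation, that $\pi \sim_0 \tau$ always holds, and that $\sim_{n+1}$ refines $\sim_n$, since an $n$-fragment is obtained from the corresponding $(n+1)$-fragment by further truncation. From these one gets $d_{\mathcal P}(\pi,\tau) \le 2^{-n} \iff \pi \sim_n \tau$, whence the strong triangle inequality follows straight from transitivity of $\sim_n$, symmetry is obvious, and $d_{\mathcal P}(\pi,\tau) = 0$ forces $\pi \sim_n \tau$ for every $n$, hence $\pi = \tau$ because an $\infty$-proof is the union of its finite fragments. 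Note also that spherical completeness implies ordinary completeness, so it suffices to prove the former.

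For spherical completeness, take a descending sequence $B_{r_1}(\sigma_1) \supseteq B_{r_2}(\sigma_2) \supseteq \cdots$ of closed balls. Its radii form a non-increasing sequence in $\{0\} \cup \{2^{-n} : n \in \mathbb N\}$, so either they are eventually constant — in which case, since two nested closed balls of equal radius in an ultrametric space coincide, the tail is a single ball contained in all earlier ones, and any of its points is a common point — or, passing to a subsequence (which leaves the intersection unchanged), we may assume the radii strictly decrease, say $r_k = 2^{-n_k}$ with $n_1 < n_2 < \cdots$. Nestedness then says exactly $\sigma_k \sim_{n_k} \sigma_{k+1}$, and combining this with the refinement property gives $\sigma_j \sim_{n_k} \sigma_k$ whenever $j \ge k$; thus the $n_k$-fragments of the $\sigma_k$ form a coherent increasing family of finite trees, and I let $\pi_\infty$ be the labelled tree they determine.

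It then remains to check that $\pi_\infty$ is a genuine $\infty$-proof lying in every ball. Locally $\pi_\infty$ is built according to the inference rules, because each of its finite initial segments coincides with one from a valid $\sigma_k$; the subtle point is the global requirement that every infinite branch pass through a right premise of $(\Box)$ infinitely often. Here the argument is that, for each $n$, truncating $\pi_\infty$ at the $n$-th right $(\Box)$-premise along every branch yields the $n$-fragment of $\sigma_k$ for any $k$ with $n_k \ge n$, which is a finite tree; since $\pi_\infty$ is finitely branching, having all of these truncations finite rules out an infinite branch carrying fewer than $n$ right $(\Box)$-premises, for every $n$, which is precisely the progress condition. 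Finally $\pi_\infty \sim_{n_k} \sigma_k$ gives $\pi_\infty \in B_{2^{-n_k}}(\sigma_k)$ for all $k$, so $\pi_\infty$ is a common point of the whole sequence. The step I expect to be the main obstacle is exactly this last one: checking that the guardedness/success condition in the definition of an $\infty$-proof survives the limit, and that the relevant $n$-fragments remain well-defined finite trees throughout; by contrast, the ultrametric axioms and the reduction to a strictly decreasing sequence of radii are routine.
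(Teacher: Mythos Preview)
The paper does not actually supply a proof of this proposition: it is stated in the body and restated in the Appendix, but in neither place is an argument given --- the authors evidently regard it as routine and pass directly to the characterisation of spherical completeness and to the more substantial case of $(\mathcal F_m, l_m)$.

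Your argument is correct and fills this gap. The ultrametric axioms and the reduction to a strictly decreasing sequence of radii $2^{-n_k}$ are, as you say, routine. The point you single out as the only substantive one --- that the limiting tree $\pi_\infty$ still satisfies the guardedness condition --- is exactly the right place to focus, and your treatment is sound: an infinite branch of $\pi_\infty$ carrying at most $m$ right $(\Box)$-premises would lie entirely inside the $n$-truncation of $\pi_\infty$ for any $n>m$, but that truncation coincides with the $n$-fragment of some $\sigma_k$, which is finite. The appeal to finite branching is slightly misplaced: it is not needed at the step where you invoke it (a finite tree cannot contain an infinite branch regardless of branching), but it is implicitly what makes the $n$-fragment of each $\sigma_k$ finite in the first place, via K\"onig's lemma --- a fact the paper simply builds into its definition of ``$n$-fragment''.
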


Consider the following characterization of spherically complete ultrametric spaces. Let us write $x \equiv_r y$ if $d(x,y)\leqslant r$.
Trivially, the relation $\equiv_r$ is an equivalence relation for any ultrametric space and any number $r\geqslant 0$. 

\begin{propos}\label{sphcomp}
An ultametric space $(M,d)$ is spherically complete if and only if for any sequence $(x_i)_{i\in \mathbb N}$ of elements of $M$, where $x_i\equiv_{r_i}x_{i+1}$ and  
$r_i \geqslant r_{i+1}$ for all $i\in \mathbb N$, there is a point $x$ of $M$ such that $x \equiv_{r_i} x_i$ for any $i\in\mathbb N$.
\end{propos}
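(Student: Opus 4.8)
The plan is to prove the two implications directly, passing back and forth between descending sequences of closed balls and the sequences $(x_i)$ described in the statement. For the forward direction, I would assume $(M,d)$ is spherically complete and take a sequence $(x_i)_{i\in\mathbb N}$ with $x_i\equiv_{r_i}x_{i+1}$ and $r_i\geqslant r_{i+1}$ for all $i$. Setting $B_i:=B_{r_i}(x_i)$, the key observation is that $(B_i)_{i\in\mathbb N}$ is a descending sequence of closed balls: if $y\in B_{i+1}$, then by the ultrametric inequality $d(y,x_i)\leqslant\max\{d(y,x_{i+1}),d(x_{i+1},x_i)\}\leqslant\max\{r_{i+1},r_i\}=r_i$, so $y\in B_i$. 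Spherical completeness then yields a common point $x\in\bigcap_{i}B_i$, and $x\in B_{r_i}(x_i)$ is precisely $x\equiv_{r_i}x_i$.

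For the converse, I would assume the stated property and let $(B_i)_{i\in\mathbb N}$ be an arbitrary descending sequence of closed balls, say $B_i=B_{s_i}(c_i)$. The delicate point is that the given radii $s_i$ need not be non-increasing, so they cannot be fed directly into the hypothesis. To fix this I would first establish the auxiliary fact that in an ultrametric space every closed ball $B$ admits the presentation $B=B_{\rho}(c)$ with $\rho:=\mathrm{diam}(B)$ and an arbitrary $c\in B$: the inclusion $B_\rho(c)\subseteq B$ uses the ultrametric inequality together with the bound $\mathrm{diam}(B_s(c'))\leqslant s$ valid for any presentation $B=B_s(c')$, and conversely any $y\in B$ satisfies $d(c,y)\leqslant\mathrm{diam}(B)=\rho$. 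Applying this with $\rho_i:=\mathrm{diam}(B_i)$ and the original centres $c_i$, I replace each $B_i$ by the presentation $B_{\rho_i}(c_i)$; since $B_{i+1}\subseteq B_i$ implies $\rho_{i+1}\leqslant\rho_i$, the radii are now non-increasing, and $c_{i+1}\in B_{i+1}\subseteq B_i=B_{\rho_i}(c_i)$ gives $c_i\equiv_{\rho_i}c_{i+1}$.

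Now the hypothesis applies to the sequence $(c_i)$ with radii $(\rho_i)$ and produces a point $x$ with $x\equiv_{\rho_i}c_i$ for every $i$, i.e.\ $x\in B_{\rho_i}(c_i)=B_i$ for every $i$; hence $x$ is a common point of the original descending sequence, and $(M,d)$ is spherically complete. I expect the only real obstacle to be exactly this radius bookkeeping in the converse direction: one must be sure that shrinking the nominal radius of a ball down to its diameter leaves the ball unchanged, and it is here that the ultrametric inequality is genuinely used; everything else is a routine application of it.
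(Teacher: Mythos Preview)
Your proposal is correct and follows the same two-way translation between descending chains of closed balls and sequences $(x_i)$ with $x_i\equiv_{r_i}x_{i+1}$ that the paper uses. The forward direction is identical to the paper's.

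For the converse, the paper simply asserts that a descending chain $(B_{r_i}(x_i))$ satisfies $r_i\geqslant r_{i+1}$, whereas you correctly observe that the \emph{nominal} radii need not be non-increasing (an ultrametric ball can be presented with many different radii), and you repair this by passing to the intrinsic radius $\rho_i=\operatorname{diam}(B_i)$. Your auxiliary fact that $B=B_{\operatorname{diam}(B)}(c)$ for any $c\in B$ is correct and its proof is fine. So your argument is the paper's argument with this bookkeeping made explicit; the extra care is warranted, since without some such normalisation the hypothesis of the characterisation cannot be invoked directly.
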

\begin{proof}
($\Rightarrow$) Assume $(M,d)$ is a spherically complete ultrametric space. Consider a sequence $(x_i)_{i\in \mathbb N}$ of elements of $M$ such that $x_i\equiv_{r_i}x_{i+1}$ and $r_i \geqslant r_{i+1}$ for all $i\in \mathbb N$.  Then the sequence $(B_{r_i}(x_i))$ is a descending sequence of closed balls, and therefore by spherical completeness has a common point $x$. Trivially, the point $x$ satisfies the desired conditions.

($\Leftarrow$) Assume there is a descending sequence of closed balls $(B_{r_i}(x_i))$. We have that $x_0 \equiv_{r_0} x_1 \equiv_{r_1} \dots$ and $r_i \geqslant r_{i+1}$ for all $i\in \mathbb N$. So there is an element $x\in M$ such that $x \equiv_{r_i} x_i$, meaning it lies in all the balls.
\end{proof}

In an ultrametric space $(M,d)$, a function $f \colon M\to M$ is called \emph{non-expansive} if $d(f(x),f(y)) \leqslant d(x,y)$ for all $x,y\in M$.
For ultrametric spaces $(M, d_M)$ and $(N, d_N)$, the Cartesian product $M \times N$ can be also considered as an ultrametric space with the metric $$d_{M \times N} ((x_1,y_1),(x_2,y_2)) = \max \{d_M(x_1,x_2), d_N(y_1,y_2) \}.$$ 


Let us consider another example. For $m\in \mathbb N$, let $\mathcal F_m$ denote the set of all non-expansive functions from $\mathcal P^m$ to $\mathcal P$. Note that any function $\mathsf u\colon \mathcal P^m\to\mathcal P$ is non-expansive if and only if for any tuples $\vec\pi$ and $\vec\pi'$, and any $n\in\mathbb N$ we have 
$$\pi_1\sim_n\pi'_1,\dotsc,\pi_m\sim_n\pi'_m\Rightarrow \mathsf u(\vec\pi)\sim_n\mathsf u(\vec\pi').$$
Now we introduce an ultrametric for $\mathcal F_m$. For $\mathsf{a,b}\in \mathcal F_m$, we write 
$\mathsf a\sim_{n,k}\mathsf b$ if $\mathsf a(\vec{\pi})\sim_n\mathsf b(\vec{\pi})$ for any $\vec{\pi}\in\mathcal P^m$ and, in addition, $\mathsf a(\vec{\pi}) \sim_{n+1}\mathsf b(\vec{\pi})$ whenever $\Sigma_{i=1}^m\lvert\pi_i\rvert< k$.\footnote{This definition is inspired by \cite[Subsection 2.1]{GiMi}.}
An ultrametric $l_m$ on $\mathcal F_m$ is defined by 
\[l_m(\mathsf a,\mathsf b)=\frac{1}{2}\inf\{\frac{1}{2^n}+\frac{1}{2^{n+k}}\mid \mathsf a\sim_{n,k}\mathsf b\}.\]
We see that $l_m(\mathsf a,\mathsf b) \leqslant 2^{-n-1}+2^{-n-k-1}$ if and only if $\mathsf a\sim_{n,k}\mathsf b$.

\begin{propos}[\ref{SphCom}]
$(\mathcal F_m, l_m)$ is a spherically complete ultrametric space.
\end{propos}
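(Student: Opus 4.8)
The plan is as follows. That $l_m$ is an ultrametric is essentially routine: symmetry is clear, $l_m(\mathsf a,\mathsf b)=0$ forces $\mathsf a(\vec\pi)\sim_n\mathsf b(\vec\pi)$ for all $n$ and $\vec\pi$ and hence $\mathsf a=\mathsf b$, and the strong triangle inequality follows from the already-noted equivalence $l_m(\mathsf a,\mathsf b)\le 2^{-n-1}+2^{-n-k-1}\iff\mathsf a\sim_{n,k}\mathsf b$ together with the transitivity of each relation $\sim_{n,k}$ (which is the intersection of the transitive relations ``$\sim_n$ on all of $\mathcal P^m$'' and ``$\sim_{n+1}$ on tuples of small total local height''), once one observes that $l_m$ takes only the values $0$ and $r_{n,k}:=2^{-n-1}+2^{-n-k-1}$ and that $\sim_{n',k'}$ refines $\sim_{n,k}$ whenever $r_{n',k'}\le r_{n,k}$. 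So the real content is spherical completeness.

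By Proposition~\ref{sphcomp}, it suffices to start from a sequence $(\mathsf a_i)_{i\in\mathbb N}$ in $\mathcal F_m$ with $\mathsf a_i\equiv_{\rho_i}\mathsf a_{i+1}$ and $\rho_0\ge\rho_1\ge\cdots$, and build $\mathsf a\in\mathcal F_m$ with $\mathsf a\equiv_{\rho_i}\mathsf a_i$ for every $i$. The set of values taken by $l_m$ is closed, so replacing each $\rho_i$ by the largest such value below it leaves every ball $B_{\rho_i}(\mathsf a_i)$ unchanged; thus we may assume either that $\rho_i=0$ for some $i$ (a trivial case: then $\mathsf a_j=\mathsf a_{j+1}$ for $j\ge i$, so $\mathsf a:=\mathsf a_i$ works) or that $\rho_i=r_{N_i,K_i}$ for all $i$. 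In the latter case the inequality $r_{N_i,K_i}\ge r_{N_{i+1},K_{i+1}}$ forces $N_i$ to be nondecreasing and $K_i$ to be nondecreasing along every maximal block of indices with $N_i$ constant, and $\mathsf a_i\equiv_{\rho_i}\mathsf a_{i+1}$ unwinds to: $\mathsf a_i(\vec\pi)\sim_{N_i}\mathsf a_{i+1}(\vec\pi)$ for every $\vec\pi$, and $\mathsf a_i(\vec\pi)\sim_{N_i+1}\mathsf a_{i+1}(\vec\pi)$ whenever $\lvert\pi_1\rvert+\cdots+\lvert\pi_m\rvert<K_i$.

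I then build $\mathsf a$ fragment by fragment, using that a non-expansive map $\mathcal P^m\to\mathcal P$ is the same thing as a coherent family of maps from $m$-tuples of $n$-fragments to $n$-fragments, and that an $n$-fragment of an $\infty$-proof with $n\ge1$ already determines its local height. Fix $\vec\pi$, put $s:=\lvert\pi_1\rvert+\cdots+\lvert\pi_m\rvert$, and set $m_j:=N_j+1$ if $s<K_j$ and $m_j:=N_j$ otherwise; the previous paragraph gives $\mathsf a_j(\vec\pi)\sim_{m_j}\mathsf a_{j+1}(\vec\pi)$, and $(m_j)_j$ is nondecreasing. Let $\iota_n(\vec\pi)$ be the least $i$ such that $m_j\ge n$ for all $j\ge i$, if such $i$ exists, and $\iota_n(\vec\pi):=\iota_{n-1}(\vec\pi)$ otherwise (with $\iota_0(\vec\pi):=0$); this is nondecreasing in $n$ and depends on $\vec\pi$ only through $s$. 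Declare the $n$-fragment of $\mathsf a(\vec\pi)$ to be that of $\mathsf a_{\iota_n(\vec\pi)}(\vec\pi)$: when the defining set for $\iota_n$ is nonempty all $\infty$-proofs $\mathsf a_j(\vec\pi)$ with $j\ge\iota_n(\vec\pi)$ share their $n$-fragment, so this is well defined, and otherwise it merely fixes some coherent extension of the $(n-1)$-fragment. A direct check along this definition shows that the family is coherent (hence $\mathsf a$ is a well-defined map) and that $\mathsf a(\vec\pi)\sim_{N_i}\mathsf a_i(\vec\pi)$ for all $i$, with $\mathsf a(\vec\pi)\sim_{N_i+1}\mathsf a_i(\vec\pi)$ whenever $s<K_i$ — that is, $\mathsf a\equiv_{\rho_i}\mathsf a_i$ — using $\iota_{N_i}(\vec\pi)\le i$ and, for the refined claim, that $m_j\ge N_i+1$ for all $j\ge i$ because $K_j\ge K_i>s$ on the constant-$N$ block through $i$.

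The delicate step, and the main obstacle, is that $\mathsf a$ be non-expansive. A naive pointwise $d_{\mathcal P}$-limit of the $\mathsf a_i(\vec\pi)$ need not exist — when $N_i$ stays bounded the sequence only stabilizes up to a finite level — and choosing common points of the relevant descending balls in $\mathcal P$ separately for each $\vec\pi$ (via spherical completeness of $\mathcal P$) would destroy non-expansiveness. The construction above works precisely because the index $\iota_n(\vec\pi)$ used to define the $n$-fragment of $\mathsf a(\vec\pi)$ depends on $\vec\pi$ only through the local heights $\lvert\pi_j\rvert$, and those are visible in every $n$-fragment with $n\ge1$: if $\pi_j\sim_n\pi'_j$ for all $j$ and $n\ge1$, then the total local heights agree, so $\iota_n(\vec\pi)=\iota_n(\vec\pi')=:i$, and then $\mathsf a(\vec\pi)\sim_n\mathsf a(\vec\pi')$ follows from non-expansiveness of $\mathsf a_i$ (the case $n=0$ being trivial). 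Hence $\mathsf a\in\mathcal F_m$, and as noted $\mathsf a\equiv_{\rho_i}\mathsf a_i$ for all $i$, which is exactly what Proposition~\ref{sphcomp} requires.
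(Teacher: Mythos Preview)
Your argument is correct. The approach differs from the paper's in organization rather than in underlying idea. The paper proceeds by a three-way case split on the limiting behaviour of the radii: if $r_i\to 0$ (so $n_i\to\infty$) it takes the pointwise $d_{\mathcal P}$-limit of the $\mathsf a_i(\vec\pi)$ via completeness of $\mathcal P$; if the $(n_i,k_i)$ eventually stabilise it takes $\mathsf a=\mathsf a_j$ for large $j$; and if $n_i$ stabilises at some $n$ while $k_i\to\infty$ it defines $\mathsf a(\vec\pi)=\mathsf a_j(\vec\pi)$ with $j=\min\{i:n_i=n\text{ and }\sum_s\lvert\pi_s\rvert<k_i\}$, checking non-expansiveness via the observation that $\sim_1$-equivalent proofs have equal local height. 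You instead give a single construction, building $\mathsf a(\vec\pi)$ fragment by fragment through the indices $\iota_n(\vec\pi)$, and make that local-height observation the organizing principle throughout: because $\iota_n$ depends on $\vec\pi$ only through $s=\sum_j\lvert\pi_j\rvert$, and $s$ is determined by the $1$-fragments, non-expansiveness falls out uniformly. Your route avoids the case distinction and makes the delicate point (why the construction is non-expansive even when the $N_i$ stay bounded) more transparent, at the cost of a slightly more elaborate bookkeeping with $m_j$ and $\iota_n$; the paper's route is shorter in each branch but has to verify non-expansiveness separately three times. Both rest on the same key fact, and your normalisation of the radii to values $r_{N_i,K_i}$ (using that the value set of $l_m$ is closed, since $\lim_{k\to\infty}r_{n,k}=r_{n+1,0}$) is a clean way to set things up.
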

\begin{proof}
Assume we have a series
$\mathsf a_0\sim_{n_0,k_0}\mathsf a_1\sim_{n_1,k_1}\dotso  $,
where the sequence $r_i=2^{-n_i}+2^{-n_i-k_i-1}$ is non-increasing. From Proposition \ref{sphcomp}, it is sufficient to find a function $\mathsf a\in\mathcal F_m$ such that $\mathsf a\sim_{n_i,k_i}\mathsf a_i$ for all $i\in \mathbb N$.


Suppose $\lim_{i\to\infty}r_i=0$. Consider a tuple $\vec{\pi}\in\mathcal P^m $. We have that $\lim_{i\to\infty}n_i=+ \infty$ and $\mathsf a_0 (\vec{\pi})\sim_{n_0}\mathsf a_1 (\vec{\pi})\sim_{n_1}\dotso$ .
By Proposition \ref{ComplP1}, there is an $\infty$-proof $\tau$ such that $\tau \sim_{n_i} \mathsf a_i (\vec{\pi})$ for all $i\in \mathbb N$. We define $\mathsf a (\vec{\pi}) = \tau$. We need to check that the mapping $\mathsf a$ is non-expansive. If for tuples $\vec\pi$ and $\vec\pi'$ we have $\pi_1\sim_n\pi'_1,\ldots,\pi_m\sim_n\pi'_m$ for some $n\in\mathbb N$, then we can choose $i$ such that $n_i>n$. We have $\mathsf a(\vec\pi)\sim_n \mathsf a_i(\vec\pi)\sim_n\mathsf a_i(\vec\pi')\sim_n\mathsf a(\vec\pi')$. Therefore $\mathsf a(\vec\pi)\sim_n \mathsf a(\vec\pi')$ and the mapping $\mathsf a$ is non-expansive.

If $\lim_{i\to\infty}r_i>0$, then $\lim_{i\to\infty}n_i= n$ for some $n \in \mathbb{N}$. We have two cases: either $\lim_{i\to\infty}k_i= k$ for a number $k \in \mathbb{N}$, or $\lim_{i\to\infty}k_i= +\infty$. In the first case, there is $j\in\mathbb N$ such that $(n_i,k_i)=(n_j,k_j)$ for all $i>j$, and we can take $\mathsf a_j$ as $\mathsf a$. Here the mapping $\mathsf a$ is obviously non-expansive.
In the second case, for a tuple $\vec{\pi}$ we define $\mathsf a (\vec{\pi})$ to be $\mathsf{a}_j (\vec{\pi})$, where $j= \min \{ i \in \mathbb{N} \mid n_i= n \text{ and } \sum_{s=1}^m\lvert\pi_s\rvert < k_i \}$. For all tuples $\vec\pi$ and $\vec\pi'$ we have $\mathsf a(\vec\pi)\sim_0 \mathsf a(\vec\pi')$. If for tuples $\vec\pi$ and $\vec\pi'$ and some $n\geqslant 1$ we have $\pi_1\sim_n\pi'_1,\ldots,\pi_m\sim_n\pi'_m$, then $\sum_{s=1}^m\lvert\pi_s\rvert=\sum_{s=1}^m\lvert\pi_s'\rvert=t$ and $\mathsf a(\vec\pi)=\mathsf a_j(\vec\pi)\sim_n \mathsf a_j(\vec\pi')=\mathsf a(\vec\pi')$, where $j=\min \{ i \in \mathbb{N} \mid n_i= n \text{ and } t < k_i \}$. Therefore $\mathsf a(\vec\pi)\sim_n \mathsf a(\vec\pi')$ and the mapping $\mathsf a$ is non-expansive.





\end{proof}

In an ultrametric space $(M,d)$, a function $f\colon M \to M$ is called \emph{(strictly) contractive} if $d(f(x),f(y)) < d(x,y)$ when $x \neq y$.






\end{document}